\definecolor{grau}{rgb}{0.5,0.5,0.5}
\newtheorem{theorem}{Theorem}[section]
\newtheorem{lemma}[theorem]{Lemma}
\newtheorem{proposition}[theorem]{Proposition}
\newtheorem{corollary}[theorem]{Corollary}
\numberwithin{equation}{section}
\newcommand {\spe} {{\textnormal{Spec}}}
\newcommand {\N} {{N_{K/\mathbb Q}}}
\newcommand {\im} {{\textnormal{im}}}
\newcommand {\OK}  {{\mathcal O_{K}}}
\newcommand {\OSt}  {{\mathcal O^{\times}_{S}}}
\newcommand {\QQ}  {{\mathbb Q}}
\newcommand {\ZZ}  {{\mathbb Z}}
\newcommand {\ord} {{\textnormal{ord}}}
\newcommand {\PP}  {{\mathbb P}}
\newcommand {\ts}  {t}
\newcommand {\ls}  {U}
\newcommand {\kk} {\kappa}
\newcommand {\CC} {\mathbb C}
\newcommand {\ccc} {c}
\newcommand {\nn} {\gamma}
\newcommand {\Sb} {T}
\newcommand {\Ov}{\mathcal O_v}
\date{}
\begin{document}
\author{Rafael von K\"anel}
\title{On Szpiro's Discriminant Conjecture}
\maketitle

\begin{abstract}
We consider generalizations of Szpiro's classical discriminant conjecture to hyperelliptic curves over a number field $K$, and to smooth, projective and geometrically connected curves $X$ over $K$ of genus at least one. The main results give effective exponential versions of the generalized conjectures for some curves, including all curves $X$ of genus one or two. We obtain in particular exponential versions of Szpiro's classical discriminant conjecture for elliptic curves over $K$. In course of our proofs we establish explicit results for certain Arakelov invariants  of hyperelliptic curves (e.g. Faltings' delta invariant)  which are of independent interest. The proofs use the theory of logarithmic forms and Arakelov theory for arithmetic surfaces. 
\end{abstract}

\section{Introduction}
\label{secintro}
Let $K$ be a number field and let $E$ be an elliptic curve over $K$. We denote by  $\Delta_E$ and by $N_E$ the norm from $K$ to $\QQ$  of the minimal discriminant ideal and of the conductor ideal of $E$ over $K$ respectively. In 1982 Szpiro conjectured that there are constants $c,\kappa$, depending only on $K$, such that
\begin{equation}\label{szpiroconj}
\Delta_E\leq c N_E^{\kappa}.
\end{equation}
This inequality is related to several conjectures in number theory. We mention the $abc$-conjecture $(abc)$ of Masser-Oesterl\'e, Vojta's conjectures in dimension one, ``good" effective versions of the Mordell conjecture and the arithmetic Bogomolov-Miyaoka-Yau inequality.

In this paper we consider two generalizations of (\ref{szpiroconj}). The first generalization, Conjecture $(\Delta)$, is to  hyperelliptic curves $C$ over $K$ of genus $\geq 1$, and the second generalization, Conjecture $(D)$, is to arbitrary smooth, projective and geometrically connected curves $X$ over $K$ of genus $\geq 1$. 
Our main results (see Theorem \ref{thm1}, \ref{thm2} and \ref{thm3}) give in particular effective exponential versions of the generalized conjectures for all curves $C$ and for all curves $X$ of genus one or two. For instance, Theorem \ref{thm3} provides completely explicit constants $c,\kappa$, depending only on $K$, such that
\begin{equation}\label{eq:expszpiro}
\log \Delta_E\leq cN_E^{\kappa}.
\end{equation}
As far as we know this is the first unconditional exponential version of (\ref{szpiroconj}), even for $K=\QQ$. This might be surprising in view of the exponential versions of the related $abc$-conjecture, established by Stewart-Tijdeman \cite{stti:abc} and Stewart-Yu \cite{styu:abc1,styu:abc2}. But in Section \ref{secstatement} we shall see that the classical links $(abc)\Rightarrow (\ref{szpiroconj})$ do not work any more with exponential versions.  Thus it seems not possible to deduce (\ref{eq:expszpiro}) for arbitrary $E$ over $\QQ$ from the exponential versions obtained in \cite{stti:abc,styu:abc1,styu:abc2}.

In course of our proofs we establish several new results for (Arakelov) invariants of hyperelliptic curves over $K$ which may be of independent interest. 
For instance,  if $X_\CC$ is a compact connected Riemann surface of genus two, then we show that Faltings' delta invariant $\delta(X_\CC)$ of $X_\CC$ satisfies $-186\leq\delta(X_\CC).$ This completely explicit lower bound is used in \cite{javk:smallpoints} to establish the genus two case of Szpiro's small points conjecture.
Furthermore, our proofs give in addition a new link between the $abc$-conjecture and (\ref{szpiroconj}). This link is more conceptual compared to the classical connection through Mordell equations, and it provides a new tool to apply the theory of logarithmic forms, or the $abc$-conjecture, to several problems in Diophantine geometry, see for example \cite{javk:smallpoints},  \cite{rvk:height} and \cite{rvk:ed}.

We also motivate Conjecture $(\Delta)$ and $(D)$. 
Theorem \ref{thm4} shows that $(abc)$ implies parts of $(\Delta)$ and $(D)$. In addition, we prove that the height conjecture of Frey \cite{frey:linksulm} implies $(D)$ for all $X$ over $K$ which have semi-stable reduction over the ring of integers of $K$. 

Next, we describe the main tools used in our proofs.  Theorem \ref{thm1} is based on the effective Shafarevich theorem for hyperelliptic curves over $K$ established in \cite{rvk:hyperelliptic}.
To prove Theorem \ref{thm2} we first slightly refine the method developed by Par{\v{s}}in \cite{parshin:shafarevich}, Oort \cite{oort:shafarevich} and de Jong-R\'emond \cite{jore:shafarevich}. 
This leads  to an exponential version of Frey's height conjecture for certain Jacobian varieties, and then we use results of Faltings \cite{faltings:arithmeticsurfaces} in Arakelov geometry. 
To get here effective estimates for hyperelliptic curves, we use formulas of Bost \cite{bost:genus2} and de Jong \cite{dejong:riemanninvariants,dejong:weierstrasspoints}. 
In the  important case of genus one curves we employ a different method to obtain sharper bounds, see Theorem \ref{thm3}. 
Here we use the theory of logarithmic forms, results of Faltings \cite{faltings:arithmeticsurfaces}, Liu-Lorenzini-Raynaud \cite{lilora:genusone} and Pesenti-Szpiro \cite{pesz:discriminant}, and the classification of Kodaira-N\'eron \cite{neron:models}. 
Theorem \ref{thm1}, \ref{thm2} and \ref{thm3} depend all ultimately on the theory of logarithmic forms, 
and we obtain Theorem \ref{thm4} on working out precisely this dependence. To deduce the corollaries we apply results for discriminants of Liu \cite{liu:conductor} and of Saito \cite{saito:genus2}. 

The plan of the article is as follows. In Section \ref{secconjectures} we define discriminants of curves $C$ and $X$, and we generalize Szpiro's conjecture. We also give some motivation for the generalizations. 
In Section \ref{secstatement} we state and discuss our results. 
Then in Section \ref{secheights} and Section \ref{secdiscriminants}  we use algebraic and Arakelov geometry, and we relate discriminants and heights of $C$ and $X$ to invariants which control the ramification of the curves over the projective line. In Section \ref{secunits} we apply the theory of logarithmic forms to estimate effectively the height of the solutions of unit equations in field extensions of $K$. Finally, in Section \ref{secproofs}, we give the proofs of the theorems and the corollaries.

Throughout this paper we shall use the following notations and conventions. By a curve $X$ over $K$ we mean a smooth, projective and geometrically connected curve $X\to \spe(K)$. We say that $X$ is semi-stable if it has semi-stable reduction over the ring of integers $\OK$ of $K$.  
We identify a prime ideal of $\OK$ with the corresponding finite place $v$ of $K$ and vice versa, we write $N_v$ for the number of elements in the residue field of $v$ and we denote by $v(\mathfrak a)$ the order of $v$ in a fractional ideal $\mathfrak a$ of $K$. By the Faltings height of an abelian variety  $A$ over $K$ we mean the absolute stable height of $A$, defined in \cite[p.354]{faltings:finiteness}.   
Finally, by $\log$ we mean the principal value of the natural logarithm and we define the maximum of the empty set and the product taken over the empty set as $1$.

\section{Discriminant Conjectures}\label{secconjectures}

Let $K$ be a number field and let $g\geq 1$ be an integer. In this section we give two generalizations of Szpiro's discriminant conjecture. The first is for hyperelliptic curves over $K$ of genus $g$, and the second is for arbitrary curves $X$ over $K$ of genus $g$. We also provide some motivation for the generalized discriminant conjectures.

We define a hyperelliptic curve over $K$ of genus $g$ as a curve $X$ over $K$ of genus $g$ that admits a finite morphism $X\to \PP^1_K$ of degree 2, where $\PP^1_K$ is the projective line over $K$. For example,  all elliptic curves over $K$ and all genus two curves $X$ over $K$ are hyperelliptic curves over $K$, see \cite[Proposition 7.4.9]{liu:ag}. Let $C$ be a hyperelliptic curve over $K$ of genus $g$ and  $\mathcal O\subseteq K$ a Dedekind domain with field of fractions $K$. We observe that the function field $K(C)$ of $C$ satisfies $K(C)=K(x)[y]$, where
\begin{equation*}
y^{2}+f_2(x)y=f(x),\ \ f, f_2\in \mathcal O[x]
\end{equation*}
and $2g+1\leq \max \bigl( 2 \deg f_2, \deg f\bigl)\leq 2g+2$. The normalization $\mathcal W(f,f_2)$  of the $\spe(\mathcal O)$-scheme $\spe(\mathcal O[x])\cup \spe(\mathcal O[1/x])$ in  $K(C)$ is called a Weierstrass model of $C$ over $\spe(\mathcal O)$. We remark that $\mathcal W(f,f_2)$ is a Weierstrass model in the sense of Liu \cite[Definition 6]{liu:models} and we note that \cite[Definition 6]{liu:models} generalizes the well-known notion of Weierstrass models over $\spe(\mathcal O)$ for elliptic curves. To simplify notation we write $\mathcal W(f)$ for $\mathcal W(f,f_2)$ if $f_2=0$. The discriminant $\Delta$ of $\mathcal W(f,f_2)$ is defined by
\begin{equation*}
\Delta=\begin{cases}
2^{4g}\Delta(f_0) & \textnormal{if } \deg f_0=2g+2\\
2^{4g}a_0^2\Delta(f_0) & \textnormal{otherwise},
\end{cases}
\end{equation*}
where $f_0=f+f_2^2/4$ has leading coefficient $a_0$ and discriminant $\Delta(f_0)$.

Let $v$ be a finite place of $K$ and let $\mathcal O_{K,v}\subset K$ be the local ring at $v$. We define $n_v=\min(v(\Delta))$
with the minimum taken over all discriminants $\Delta$ of Weierstrass models of $C$ over $\spe(\mathcal O_{K,v})$. The  integer $n_v$ is non-negative, and $n_v$ is zero if and only if $C$ has good reduction at $v$. We define the minimal discriminant $\Delta_C$ of $C$ by
$$
\Delta_C=\prod N_v^{n_v}
$$
with the product extended over all finite places $v$ of $K$. We note that this explicit definition of $\Delta_C$ is intrinsic. A result of Liu \cite[Proposition 8 (c)]{liu:models} implies that if $C$ is an elliptic curve $E$ over $K$, then $\Delta_C$ is the usual minimal discriminant $\Delta_E$ of $E$. 

We now define a discriminant $D_X$ for any curve $X$ over $K$ of genus $g$. Let $v$ be a finite place of $K$. To define a local discriminant exponent $\delta_v$ of $X$ at $v$ we follow Deligne \cite{deligne:quillen} and Saito \cite{saito:conductor}. The definition uses Mumford's isomorphism \cite[Theorem 5.10]{mumford:projectivevarieties} which is based on Grothendieck's relative Riemann-Roch theorem. We denote by $\mathcal O_v$ the valuation ring of the completion $K_v$ of $K$ at $v$ and we write $B_v=\spe(\Ov)$. Let $X_v$ be the base change of $X$ to $K_v$ and let $\rho:\mathcal X_v\to B_v$ be the minimal regular model of $X_v$ over $B_v$. The generic fiber $\rho_\eta:X_v\to \spe(K_v)$ of $\rho:\mathcal X_v\to B_v$ is smooth and thus Deligne \cite[p.C1]{deligne:quillen}, or Saito \cite[p.154]{saito:conductor}, gives an isomorphism of invertible sheaves on $\spe(K_v)$
\begin{equation*}
D:\det R{\rho_\eta}_*(\omega_{X_v/K_v}^{\otimes 2})\rightarrow (\det R{\rho_\eta}_*(\omega_{X_v/K_v}))^{\otimes 13}
\end{equation*}
which is unique up to a sign and which commutes with arbitrary base change. Here  $\omega_{Y/Z}$ denotes the relative canonical sheaf of a projective flat morphism $\pi:Y\to Z$ of regular schemes with $Z$ Noetherian. 
A definition and basic properties of $\det R\pi_*$ are given for example in \cite[Section 5]{lilora:genusone}.
We identify the invertible sheaves
$$\det R\rho_*(\omega_{\mathcal X_v/B_v}^{\otimes 2}) \textnormal{ and } (\det R\rho_*(\omega_{\mathcal X_v/B_v}))^{\otimes 13}$$
on $B_v$ with their canonical images, see \cite[Lemma 5.3]{lilora:genusone}, in
$$\det R{\rho_\eta}_*(\omega_{X_v/K_v}^{\otimes 2}) \textnormal{ and } (\det R{\rho_\eta}_*(\omega_{X_v/K_v}))^{\otimes 13}$$ respectively. Then \cite[Lemma 5.3]{lilora:genusone} gives a non-zero $\delta\in K_v$ such that
\begin{equation*}
D\bigl(\det R\rho_*(\omega_{\mathcal X_v/B_v}^{\otimes 2})\bigl)=\delta(\det R\rho_*(\omega_{\mathcal X_v/B_v}))^{\otimes 13}.
\end{equation*}
We take $\delta_v=v(\delta)$. It turns out that $\delta_v$ is a non-negative integer and if $\mathcal X_v$ is semi-stable, then $\delta_v$ coincides with the number of singular points of the geometric special fiber of $\rho:\mathcal X_v\to B_v$. In particular, if $\rho:\mathcal X_v\to B_v$ is smooth, then $\delta_v=0$. Therefore we can define the discriminant $D_X$ of $X$ by
\begin{equation*}
D_X=\prod N_v^{\delta_v} 
\end{equation*}
with the product taken over all finite places $v$ of $K$. We mention that if $X$ is an elliptic curve $E$ over $K$, then Proposition \ref{propde} (ii) gives $D_X=\Delta_E$.

To define a conductor $N_X$ for any curve $X$ over $K$ of genus $g$ we take again a finite place $v$ of $K$. Let $f_v$ be the usual conductor exponent at $v$ of the Jacobian variety $J=\textnormal{Pic}^{0}(X)$ of $X$, see \cite[p.575]{lorosi:conductor}. 
We set $a_v=1$ if $X$ has bad reduction at $v$ and $J$ has good reduction at $v$, and $a_v=0$ otherwise. Then we define the conductor $N_X$ of $X$ by 
\begin{equation*}
N_X=\prod N_v^{f_v+a_v}
\end{equation*}
with the product taken over the finite places $v$ of $K$. If $X$ is an elliptic curve $E$ over $K$, then $a_v=0$ for all finite places $v$ of $K$ and we get that $N_X$ is the usual conductor $N_E$ of the abelian variety $E$ over $K$.

We now generalize  Szpiro's classical discriminant conjecture  for elliptic curves over $K$, see \cite[Conjecture 1]{szpiro:seminaire88} or (\ref{szpiroconj}), to more general curves.

\vspace{0.2cm}
\noindent{\bf Conjecture ($\Delta$)}
\emph{There are constants $\ccc,\kk$, depending only on $K$ and $g$, such that if $C$ is a hyperelliptic curve over $K$ of genus $g$, then $\Delta_C\leq \ccc N_C^{\kk}.$\medskip}

We mention that  Lockhart \cite[Conjecture 4.1]{lockhart:discriminant} proposes a slightly stronger conjecture for hyperelliptic curves over $K$ of genus $g$ with a $K$-rational Weierstrass point. Therein the exponent $\kk$ depends only on $g$.

\vspace{0.2cm}
\noindent{\bf Conjecture ($D$)}
\emph{There exist constants $\ccc,\kk$, depending only on $K$ and $g$, such that if $X$ is a curve over $K$ of genus $g$, then $D_X\leq \ccc N_X^{\kk}.$\medskip}

To keep the analogy with the elliptic case we stated the above conjectures in terms of the somewhat crudely defined conductor $N_X$. It would be more natural to use
$M_X=\prod  N_v$ with the product taken over all finite places $v$ of $K$ where $X$ has bad reduction. However, Lockhart-Rosen-Silverman \cite[Theorem 0.1]{lorosi:conductor} give an effective constant $\kappa'$, depending only on $g$ and on the degree of $K$ over $\QQ$, such that $M_X \leq N_X\leq M_X^{\kappa'}.$ Hence one can replace $N_X$ by $M_X$ in $(\Delta)$ and $(D)$ without changing the statements. 

Conjecture $(D)$ for hyperelliptic curves and Conjecture ($\Delta$) are not unrelated. For example, if $X=E=C$ is an elliptic curve over $K$, then the identities $$D_X=\Delta_E=\Delta_C, \ \ \ N_X=N_E=N_C,$$  show that $(D)$, Szpiro's classical conjecture (\ref{szpiroconj}) and ($\Delta$) are all equivalent for elliptic curves over $K$\footnote[1]{In August 2012, Shinichi Mochizuki posted a preprint on his homepage in which he claims to prove these conjectures for elliptic curves over $K$.}. But for hyperelliptic curves $C$ over $K$ of genus $g\geq 2$ the situation is more complicated. Kausz \cite[p.44]{kausz:discriminant} introduced a further discriminant exponent $\Lambda_v$ of $C$ at a finite place $v$ of $K$ with $2\nmid N_v$. To define $\Lambda_v$ for $v$ with $2\mid N_v$ one can use Maugeais \cite[Proposition 2.7]{maugeais:discriminant}. The exponent $\Lambda_v$  is closely related to $n_v$ and  Cornalba-Harris, de Jong, Kausz, Liu, Maugeais, Saito, Ueno and Yamaki compared $\Lambda_v$ to $\delta_v$; for an overview on these comparison results and precise references we refer to \cite[p.274]{dejong:weierstrasspoints}. In the semi-stable case there are explicit inequalities relating $g$, $\delta_v$ and $\Lambda_v$ and it should be possible to show that ($D$) and ($\Delta$) are equivalent for semi-stable $C$. However, it remains an interesting project to find relations between the discriminant conjectures for non-semi-stable $C$.

In the remaining of this section we provide some motivation for our conjectures. We note that geometric analogues of $(\Delta)$ and $(D)$ are already established in many cases. For example, geometric versions of $(D)$ are given by Arakelov \cite{arakelov:shafarevich} and Szpiro \cite{szpiro:parsin,szpiro:deux}, see also the more recent article of Beauville \cite{beauville:szpiro} and the preprint of Kim \cite{kim:szpiroconj}. Furthermore, there exists a geometric analogue of $(\Delta)$ due to Bogomolov-Katzarkov-Pantev \cite{bokapa:szpiro} and Pesenti-Szpiro \cite{pesz:discriminant} obtained a fairly general elliptic version of $(\Delta)$.

We now give propositions which provide further motivation for ($\Delta$) and $(D)$.  These propositions follow more or less directly from suitable interpretations of known results and thus we include the references in the statements. Let $h_F(A)$ be the Faltings height of an abelian variety $A$ over $K$. We define the conductor $N_A$ of $A$ by $N_A=\prod N_v^{f_v}$ with the product taken over all finite places $v$ of $K$, where $f_v$ is the usual  conductor exponent at $v$ of $A$.  The following conjecture  is a weaker version of Frey's conjecture \cite[p.39]{frey:linksulm}. 

\vspace{0.2cm}
\noindent{\bf Conjecture ($H$)} (\cite{frey:linksulm}).
\emph{There are constants $c,\kappa$, depending only on $K,g$, such that if $A$ is an abelian variety over $K$ of dimension $g$, then $h_F(A)\leq \kappa\log N_A+c.$\medskip}

Frey \cite{frey:linksulm} motivates this conjecture in the case $g=1$. In particular, he proves a function field analogue of $(H)$ for elliptic curves. We mention in addition that if $g\geq 2$, then geometric analogues of $(H)$ were established by Faltings \cite{faltings:arakelovtheorem}, by Deligne \cite{deligne:monodromie} and by Kim \cite{kim:abc}. 
To relate $(H)$ to $(D)$ we observe that the conductor of the Jacobian of $X$ divides $N_X$. Hence Proposition \ref{propd} (iv), which is based on fundamental results of Faltings \cite{faltings:arithmeticsurfaces}, implies the following proposition.
\begin{proposition}[\cite{faltings:arithmeticsurfaces}]
Conjecture $(H)$ for semi-stable Jacobian varieties over $K$ of dimension $g$ implies $(D)$ for semi-stable curves over $K$ of genus $g$.
\end{proposition}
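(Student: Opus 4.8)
The plan is to deduce $(D)$ for a semi-stable curve $X$ over $K$ of genus $g$ from Conjecture $(H)$ applied to its Jacobian. First I would note that, since $X$ is semi-stable, $J=\textnormal{Pic}^{0}(X)$ is a semi-stable abelian variety over $K$ of dimension $g$; hence $(H)$ yields constants $c,\kappa$, depending only on $K$ and $g$, with $h_F(J)\leq\kappa\log N_J+c$, where $N_J$ denotes the conductor of $J$. Since the conductor exponent of $J$ at a finite place $v$ is precisely the quantity $f_v$ appearing in the definition of $N_X$, the conductor $N_J$ divides $N_X$, so $\log N_J\leq\log N_X$ and therefore $h_F(J)\leq\kappa\log N_X+c$.

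It then remains to bound $\log D_X$ linearly in $h_F(J)$, and this is exactly what Proposition \ref{propd} (iv) supplies: an effective constant, depending only on $K$ and $g$, such that $\log D_X\leq c'h_F(J)+c''$ for every semi-stable $X$ over $K$ of genus $g$. Conceptually this rests on the arithmetic Noether formula of Faltings \cite{faltings:arithmeticsurfaces} for the minimal regular model $\rho\colon\mathcal X\to\spe(\OK)$ of $X$, which in the semi-stable case is itself semi-stable: that formula expresses $12\deg\det R\rho_*(\omega_{\mathcal X/\OK})$ as the sum of the Arakelov self-intersection of the relative dualizing sheaf, the finite-place contribution $\sum_v\delta_v\log N_v=\log D_X$ — here one uses that $\delta_v$ equals the number of singular points of the geometric special fibre, see Section \ref{secconjectures} — and an archimedean term built from Faltings' delta invariants $\delta(X_\sigma)$. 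Using that the self-intersection is non-negative and that $\delta(X_\sigma)$ admits an explicit lower bound depending only on $g$, one gets $\log D_X\leq 12\deg\det R\rho_*(\omega_{\mathcal X/\OK})+O_{K,g}(1)$, and the degree of this Hodge bundle differs from $[K:\QQ]\,h_F(J)$ only by a bounded archimedean contribution; combining these gives the required bound.

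Chaining the two estimates yields $\log D_X\leq c'(\kappa\log N_X+c)+c''$, that is $D_X\leq\tilde c\,N_X^{\tilde\kappa}$ with $\tilde c,\tilde\kappa$ depending only on $K$ and $g$, which is precisely $(D)$ for semi-stable curves over $K$ of genus $g$.

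The argument above is a formal combination of known facts; the genuine content, and the main obstacle, lies inside Proposition \ref{propd} (iv) — namely in knowing that the two error terms of the Noether formula have the correct sign and are effectively controlled: the non-negativity of the self-intersection of the relative dualizing sheaf for semi-stable curves of genus at least one, and an explicit lower bound for Faltings' delta invariant in terms of the genus. Both follow from the results of Faltings in \cite{faltings:arithmeticsurfaces} (supplemented in small genus by the explicit archimedean estimates mentioned in the introduction), so for the present proposition they may be taken as given.
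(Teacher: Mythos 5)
Your proposal is correct and follows essentially the same route as the paper: one observes that the conductor of the Jacobian divides $N_X$, so $(H)$ bounds $h_F(J)$ by $\kappa\log N_X+c$, and then Proposition \ref{propd} (iv) — whose proof you sketch exactly as the paper does, via Faltings' Noether formula with $\omega^2\geq 0$ and a lower bound for the delta invariant depending only on $g$ — converts this into $(D)$. The only overstatement is calling that lower bound ``explicit''/``effective'': for general $g$ it is not known to be effective (the paper stresses this, giving explicit bounds only in the hyperelliptic and genus-two cases), but this is harmless here since $(D)$ merely requires constants depending on $K$ and $g$.
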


We next consider the case of potential good reduction. Let $C$ be a hyperelliptic curve over $K$ of genus $g$, with potential good reduction at a finite place $v$ of $K$. Liu \cite[Remarque 13]{liu:models} showed that if the residue characteristic of $v$ is at least $2g+2$, then the discriminant exponent $n_v$ of $C$ satisfies $n_v\leq\kappa(g)=2(2g+1)([(g+1)/2]+1).$ Here $[(g+1)/2]$ denotes the smallest integer which is at least $(g+1)/2$. We deduce the following result. 

\begin{proposition}[\cite{liu:models}]\label{propliu}
Suppose $C$ is a hyperelliptic curve over $K$ of genus $g$. If $C$ has everywhere potential good reduction and any rational prime divisor of $N_C$ is at least $2g+2$, then $\Delta_C\vert M_C^{\kappa(g)}.$
\end{proposition}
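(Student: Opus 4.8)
The plan is to reduce the divisibility $\Delta_C\mid M_C^{\kappa(g)}$ to a purely local statement and then feed in Liu's bound. Since $\Delta_C$ and $M_C$ are positive integers, it suffices to show that for every rational prime $p$ one has $\ord_p(\Delta_C)\leq \kappa(g)\,\ord_p(M_C)$, where $\ord_p$ is the $p$-adic valuation on $\QQ$. Writing $N_v=p^{d_v}$ for a place $v$ above $p$, where $d_v=[\kappa(v):\FF_p]$ is the residue degree, and grouping the defining products by the rational prime below each place, one gets $\ord_p(\Delta_C)=\sum_{v\mid p}d_v n_v$ and $\ord_p(M_C)=\sum_{v\mid p,\ \text{bad}}d_v$, the sums running over the finite places $v$ of $K$ above $p$.

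First I would discard the places of good reduction: there $n_v=0$, so in fact $\ord_p(\Delta_C)=\sum_{v\mid p,\ \text{bad}}d_v n_v$, and it is enough to prove $n_v\leq\kappa(g)$ at every place $v$ where $C$ has bad reduction. Next I would pin down the rational prime divisors of $N_C$. From $N_C=\prod N_v^{f_v+a_v}$, with $f_v$ the conductor exponent of $J=\textnormal{Pic}^0(C)$ at $v$ and $a_v\in\{0,1\}$, one checks that $f_v+a_v\geq 1$ exactly when $C$ has bad reduction at $v$ (if $C$ and $J$ are both bad then $f_v\geq 1$; if $C$ is bad and $J$ is good then $a_v=1$; if $C$ is good then $J$ is good and both vanish). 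Hence the set of rational primes dividing $N_C$ is precisely the set of residue characteristics of the bad places of $C$.

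Now I would combine the hypotheses. If $C$ has bad reduction at $v$, then the residue characteristic $p$ of $v$ divides $N_C$, so $p\geq 2g+2$ by assumption; moreover $C$ has potential good reduction at $v$ by assumption. Therefore Liu's estimate in \cite[Remarque 13]{liu:models} applies and yields $n_v\leq\kappa(g)=2(2g+1)([(g+1)/2]+1)$. Substituting back, for every rational prime $p$ we obtain $\ord_p(\Delta_C)=\sum_{v\mid p,\ \text{bad}}d_v n_v\leq\kappa(g)\sum_{v\mid p,\ \text{bad}}d_v=\kappa(g)\,\ord_p(M_C)$, which is exactly the claimed divisibility.

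The heavy lifting is entirely contained in Liu's cited bound on $n_v$ in the case of potential good reduction; the only genuinely delicate point in the argument above is the bookkeeping of the previous paragraph, namely verifying that $N_C$ and $M_C$ are supported on the same set of rational primes, since this is what guarantees that Liu's hypothesis ``residue characteristic $\geq 2g+2$'' is in force at \emph{every} bad place of $C$. Once that is in place the proof is immediate.
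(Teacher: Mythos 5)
Your proof is correct and is exactly the bookkeeping that the paper leaves implicit: the paper only quotes Liu's local bound $n_v\leq\kappa(g)$ and then says ``we deduce the following result,'' and your argument is precisely that deduction — reduce to $p$-adic valuations, observe that the rational primes dividing $N_C$ are exactly the residue characteristics of the bad places of $C$ (so Liu's hypothesis is in force at every bad place), and sum the local bounds.
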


The exponent $\kappa(g)$ is optimal for any $g\geq 2$, since $\Delta_C=M_C^{\kappa(g)}$ holds for infinitely many curves $C$, see Liu \cite[p.4596]{liu:models}. Proposition \ref{propliu} and Proposition \ref{propd} (iii) show that if a genus two curve $X$ over $K$ has everywhere potential good reduction and any rational prime divisor of $N_X$ is at least seven, then  $D_X\vert M_X^{30}.$ In the genus one case one can remove the technical assumption on the residue characteristics. Pesenti-Szpiro \cite{pesz:discriminant} derived from the classification of Kodaira-N\'eron \cite{neron:models}  that any elliptic curve $E$ over $K$ with integral $j$-invariant satisfies $\Delta_E \vert N_E^5$. This together with Proposition \ref{propde}, which is based on a theorem of Liu-Lorenzini-Raynaud \cite{lilora:genusone}, gives the following proposition.
\begin{proposition}[\cite{neron:models}, \cite{pesz:discriminant}, \cite{lilora:genusone}]
If $X$ is a genus one curve over $K$ with everywhere potential good reduction, then $D_X\vert N_X^5$.
\end{proposition}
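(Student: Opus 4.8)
The plan is to pass to the Jacobian and to transport the Pesenti--Szpiro bound recalled above. Write $E=\textnormal{Pic}^{0}(X)$, the Jacobian of $X$; it is an elliptic curve over $K$. The first point to check is that $E$ inherits everywhere potential good reduction: given a finite place $v$ of $K$, choose a finite extension $L/K$ and a place $w\mid v$ of $L$ such that $X_L$ admits a smooth projective model $\mathcal X$ over the discrete valuation ring $\mathcal O_{L,w}$; then $\textnormal{Pic}^{0}_{\mathcal X/\mathcal O_{L,w}}$ is an abelian scheme with generic fibre $E_L$, so $E$ acquires good reduction at $w$. As $v$ is arbitrary, $E$ has potential good reduction everywhere, equivalently its $j$-invariant lies in $\OK$. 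Consequently the classification of Kodaira--N\'eron \cite{neron:models} together with the argument of Pesenti--Szpiro \cite{pesz:discriminant} gives $\Delta_E\mid N_E^{5}$; what the proof uses is the underlying local inequality $\delta_v(E)\leq 5f_v$ at every finite place $v$, where $f_v$ is the conductor exponent of $E$ at $v$ and $\delta_v(E)$ denotes the minimal discriminant exponent of $E$ at $v$.

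Since $D_X=\prod N_v^{\delta_v}$, $N_X=\prod N_v^{f_v+a_v}$ and $N_E^{5}=\prod N_v^{5f_v}\mid N_X^{5}$, it suffices to establish $\delta_v\leq 5(f_v+a_v)$ for each finite place $v$. This local estimate is what Proposition \ref{propde} --- which rests on the description of minimal regular models of genus one curves due to Liu--Lorenzini--Raynaud \cite{lilora:genusone} --- is designed to provide: it compares the minimal regular model $\mathcal X_v$ of $X_v$ with the minimal regular model of $E_v$. The mechanism behind such a comparison is Saito's conductor formula \cite{saito:conductor}, by which $\delta_v=\textnormal{Sw}_v+\chi(\mathcal X_{v,\bar s})-\chi(X_{v,\bar\eta})$; the Swan term coincides with the one attached to $E$, because the $\ell$-adic cohomology of a torsor under an abelian variety is canonically the cohomology of that abelian variety, while Liu--Lorenzini--Raynaud control the Euler characteristic of the special fibre of $\mathcal X_v$ in terms of that of $E_v$ together with the period of $X$ at $v$. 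At a place where $E$ has good reduction one finds that $\delta_v$ is small --- bounded by $5$, in fact $0$ --- whatever the reduction of $X$, so the correction term $a_v$ in $N_X$ is amply sufficient; at a place of bad, hence potentially good, reduction of $E$ one obtains $\delta_v\leq\delta_v(E)\leq 5f_v\leq 5(f_v+a_v)$ by the previous paragraph. Multiplying over all $v$ yields $D_X\mid N_X^{5}$.

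The only genuinely non-formal ingredient is the comparison of minimal regular models inside Proposition \ref{propde}, and this is the step I expect to be the main obstacle. For a non-trivial torsor $X$ the minimal regular model of $X_v$ differs from that of $E_v$ --- it may carry a multiple fibre, or additional components governed by the period of $X$ at $v$ --- and one must show that this never inflates the Euler-characteristic contribution of the special fibre of $\mathcal X_v$ beyond $\delta_v(E)$, up to the bounded amount absorbed by $a_v$. This is precisely the delicate part of the Liu--Lorenzini--Raynaud analysis, which is subtlest in residue characteristics $2$ and $3$ because of the possible failure of cohomological flatness. Granting Proposition \ref{propde}, the proposition at hand is just its combination with the Pesenti--Szpiro inequality, in line with the remark that these propositions follow rather directly from known results.
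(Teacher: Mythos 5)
Your argument is essentially the paper's: pass to the Jacobian $E$, observe that it inherits everywhere potential good reduction, invoke the Pesenti--Szpiro bound $\Delta_E\mid N_E^5$ coming from the Kodaira--N\'eron classification, and transfer back to $X$ via Proposition \ref{propde} (Liu--Lorenzini--Raynaud). The only difference is cosmetic: Proposition \ref{propde} (i)--(ii) give the exact equalities $D_X=D_E=\Delta_E$ and $N_X=N_E$ (in particular $a_v=0$ for every genus one curve), so your hedged local estimate $\delta_v\leq 5(f_v+a_v)$ and the concern about multiple fibres inflating the special fibre are already subsumed in that proposition, and the conclusion is immediate.
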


On considering, for example, elliptic curves with $j$-invariant zero we see that the exponent 5 is optimal. To conclude our discussion we remark that results of Serre \cite{serre:representations} and Mestre-Oesterl\'e \cite{meoe:weilcurvediscriminants} imply that if $X$ is a genus one curve over $\QQ$ with  conductor a rational prime, then $D_X\vert N_X^{5}$.  

\section{Statement of the results}\label{secstatement}

In this section we state and discuss our main results. Let $K$ be a number field and let $g\geq 1$ be an integer. In the sequel $c_i,\kappa_i,$ $i\geq 1$, denote effective constants depending only on $K$ and $g$. They shall be given in a more explicit form at the end of this section.

The following theorem  establishes an effective exponential version of Conjecture ($\Delta$).

\begin{theorem}\label{thm1}
Suppose $C$ is a hyperelliptic curve over $K$ of genus $g$, with minimal discriminant $\Delta_C$ and conductor $N_C$. Then the following statements hold.
\begin{itemize}
\item[(i)] If  $C$ has a $K$-rational Weierstrass point, then $\log \Delta_C\leq \ccc_1N_C^{\kk_1}.$
\item[(ii)] In general $C$ satisfies $\log \Delta_C\leq \exp(\ccc_2(\log N_C)^{\kk_2}).$\end{itemize}\end{theorem}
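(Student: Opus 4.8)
The plan is to reduce the discriminant conjecture for a hyperelliptic curve $C$ to an effective Shafarevich-type finiteness statement, and then to extract the bound from the theory of logarithmic forms. First I would use the algebraic geometry of Weierstrass models developed in Section \ref{secdiscriminants}: writing $K(C)=K(x)[y]$ with $y^2+f_2(x)y=f(x)$, the minimal discriminant $\Delta_C$ is, up to controlled factors, measured by the discriminant $\Delta(f_0)$ of $f_0=f+f_2^2/4$, so bounding $\log\Delta_C$ amounts to bounding the height of a suitable Weierstrass equation for $C$ in terms of the primes of bad reduction, i.e.\ in terms of $N_C$. The reduction of $C$ is controlled by the ramification of the degree-$2$ map $C\to\PP^1_K$, hence by the position of the roots of $f_0$ in $\bar K$; comparing two such equations leads to unit equations in the field extension of $K$ generated by these roots, whose degree is bounded in terms of $g$.

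For part (i), when $C$ has a $K$-rational Weierstrass point one can normalize $f_0$ to have a root at infinity (or at $0$), so that $C\cong \mathcal W(f)$ with $\deg f=2g+1$ and the leading coefficient and one root under control. This is precisely the situation handled by the effective Shafarevich theorem for hyperelliptic curves in \cite{rvk:hyperelliptic}: there are only finitely many such $C$ with good reduction outside a fixed finite set $S$ of places, and the height of a defining model is effectively bounded in terms of $S$. Taking $S$ to be the set of places dividing $N_C$ (together with the places above $2$, absorbed into the constants), the effective Shafarevich bound gives $\log\Delta_C\le c_1 N_C^{\kappa_1}$ directly, the single-exponential shape coming from the dependence of the logarithmic-forms bounds on the size (norm) of the set $S$ rather than on its cardinality alone.

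For part (ii), in the general case there need not be a rational Weierstrass point, but after a finite extension $K'/K$ of degree bounded in terms of $g$ (one adjoins a root of $f_0$, or passes to a splitting situation) the curve acquires a rational Weierstrass point. One then applies part (i) over $K'$ and descends: the set $S'$ of bad places of $C$ over $K'$ has norm bounded polynomially in $N_C$, but the \emph{number} of rational primes below $S'$, and the regulator and discriminant of $K'$, can only be controlled by something like $(\log N_C)^{O(1)}$; feeding this into the logarithmic-forms estimates of Section \ref{secunits} produces a bound of the form $\log\Delta_{C/K'}\le \exp\!\bigl(c_2(\log N_C)^{\kappa_2}\bigr)$, and since $\Delta_C$ divides a fixed power of $\Delta_{C/K'}$ (the degree $[K':K]$ being bounded) the same shape of bound holds for $\Delta_C$ over $K$.

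The main obstacle is the lack of a rational Weierstrass point in part (ii): passing to $K'$ is forced, and the cost is that the relevant unit equation now lives over a field whose arithmetic invariants are not polynomially bounded in $N_C$, which is exactly why the clean single-exponential bound of (i) degrades to the iterated-exponential bound $\exp(c_2(\log N_C)^{\kappa_2})$. Making the dependence on $[K':K]$, on $\mathrm{disc}(K')$ and on the regulator of $K'$ fully explicit in the logarithmic-forms step — so that the final constants $c_2,\kappa_2$ depend only on $K$ and $g$ — is the delicate part of the argument; everything else is a matter of carefully tracking the algebraic-geometry dictionary between $\Delta_C$, $N_C$ and the ramification data of $C\to\PP^1_K$ established in Sections \ref{secheights} and \ref{secdiscriminants}.
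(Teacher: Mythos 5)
For part (i) your route is the same as the paper's: take $S$ to be the set of bad-reduction places, invoke the effective Shafarevich theorem of \cite{rvk:hyperelliptic} to produce a Weierstrass model $\mathcal W$ over $\spe(\OK)$ whose discriminant $\Delta$ has bounded height, note that the local minimality of the exponents $n_v$ forces $\Delta_C\leq \N(\Delta)\leq\exp(dh(\Delta))$, and then control the cardinality $s$ of $S$ via the explicit prime-counting estimates (Lemma \ref{lem:analytic}). So far so good.

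For part (ii) you diverge from the paper, and there are two concrete problems with the divergence. The paper does \emph{not} pass to an auxiliary field $K'$ over which $C$ acquires a rational Weierstrass point; it cites \cite[Theorem]{rvk:hyperelliptic} directly, which already provides a Weierstrass model over $\spe(\OK)$ in the no-Weierstrass-point case, just with a height bound of the much larger shape $(\gamma(s+h_K))^{k_8(2\gamma)^3(s+h_K)^4}M_C^{(3\gamma)^3(s+h_K)^4}D_K^{(3\gamma)^3(s+h_K)^4}$ rather than the shape with linear exponent in $s+h_K$. The whole jump from $c_1N_C^{\kappa_1}$ to $\exp(c_2(\log N_C)^{\kappa_2})$ is already encoded in that fourth-power dependence on $s+h_K$, with $h_K$ \emph{fixed} (it is the class number of the fixed field $K$), and $s$ controlled by Lemma \ref{lem:analytic}. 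Your route instead would apply (i) over a $C$-dependent field $K'$. The constant in (i) is $c_1=(3D_K^{h_K})^{\kappa_1}$ and $\kappa_1=(8gd)^4\log(3h_K)$; over $K'$ this becomes $(3D_{K'}^{h_{K'}})^{(8g[K':\QQ])^4\log(3h_{K'})}$. But while $\log D_{K'}$ is $O(\log N_C)$ (since $K'/K$ is unramified outside $S\cup\{v\mid 2\}$ and of bounded degree, so Lemma \ref{lem:dl} applies), the class number $h_{K'}$ is only controlled by $D_{K'}^{1/2+o(1)}$, i.e.\ it is polynomial in $N_C$, \emph{not} in $\log N_C$. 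Consequently $\log c_1(K')\gtrsim h_{K'}\log D_{K'}\gtrsim N_C^{\alpha}\log N_C$ for some $\alpha>0$, and the resulting bound is $\log\Delta\leq\exp(N_C^{\alpha+o(1)})$, which is far weaker than the advertised $\exp(c_2(\log N_C)^{\kappa_2})$. This is exactly why one cannot simply rerun the rational-Weierstrass-point argument over $K'$ as a black box; the class number of the auxiliary field is the killer.

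The final descent step is also not correct as stated: it is \emph{not} true that $\Delta_C$ divides a bounded power of $\Delta_{C_{K'}}$. Base change to $K'$ can only decrease local discriminant exponents (additive/unstable reduction can become good or semi-stable after a ramified extension), so one has the opposite divisibility $\Delta_{C_{K'}}\mid\Delta_C^{[K':K]}$, which is useless here. The correct way to go from the $K'$-side back to $\Delta_C$, and the way the paper does it, is not through $\Delta_{C_{K'}}$ at all: one must produce a Weierstrass model $\mathcal W$ of $C$ over $\spe(\OK)$ itself with controlled height, and then use $n_v\leq v(\Delta(\mathcal W))$ together with $\N(\Delta)\leq\exp(dh(\Delta))$. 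In the paper this is supplied by the cited theorem in both cases; in your setup it would require an extra descent of the $K'$-model to a $K$-model, which is nontrivial and is not addressed in your sketch.
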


Let $X$ be a curve over $K$ of genus $g$, with discriminant $D_X$ and conductor $N_X$. We say that $X$ is a cyclic cover of prime degree if there exists a finite morphism $X\to \mathbb P^1_K$ of prime degree which is geometrically a cyclic cover.  
In the sequel, $c'_3$ denotes a constant depending only on $K$ and $g$. 

\begin{theorem}\label{thm2}
If $X$ is a semi-stable cyclic cover of prime degree with genus $g\geq 2$, then $\log D_X\leq c'_3 N_X^{\kk_3}.$
\end{theorem}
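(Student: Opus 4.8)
The plan is to reduce the problem to a statement about the Faltings height of the Jacobian $J$ of $X$, and then to control that height effectively via a refined Par{\v{s}}in--Oort--de Jong--R\'emond argument combined with the theory of logarithmic forms. First I would record the basic reduction step: since $X$ is semi-stable, Proposition \ref{propd} (iv), which is based on Faltings' results in \cite{faltings:arithmeticsurfaces}, translates the bound on $\log D_X$ into a bound of the shape $\log D_X \leq \kappa_3' h_F(J) + c_3'$ (absorbing all quantities depending only on $K$ and $g$ into the constants), where $h_F(J)$ is the Faltings height of the Jacobian. So the whole burden shifts to proving an effective exponential version of Frey's height conjecture $(H)$ for $J$, namely $h_F(J) \leq c\, N_X^{\kappa}$; note that the conductor of $J$ divides $N_X$, so it suffices to bound $h_F(J)$ polynomially in $N_J$.

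Next I would exploit the cyclic cover hypothesis. If $X \to \PP^1_K$ is geometrically a cyclic cover of prime degree $p$, then after a finite base change (controlled in terms of $K$ and $g$, and which changes heights and conductors only by bounded amounts in the semi-stable setting) $X$ is given by an equation of the form $y^p = h(x)$ with $h \in \OK'[x]$ over a controlled extension $K'$. The branch locus of the cover is a finite set of points in $\PP^1$ defined over $K'$, and semi-stability forces the corresponding configuration of branch points to be ``non-degenerate'' only up to the reduction data counted by $N_X$. The key point is that the cross-ratios (or, more precisely, the differences of the branch points) are $S$-units, where $S$ is a set of places controlled by the primes dividing $N_X$ together with the places above $p$ and the archimedean places. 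This is exactly the setup where the effective estimates of Section \ref{secunits} apply: the theory of logarithmic forms gives an explicit bound on the height of solutions of unit equations, hence an explicit bound on the height of the branch configuration. One then feeds this height bound for the branch points into the formulas for Arakelov invariants of $X$ — using the results of Faltings \cite{faltings:arithmeticsurfaces}, and for the hyperelliptic-type contributions the explicit formulas of de Jong \cite{dejong:riemanninvariants,dejong:weierstrasspoints} — to obtain an explicit upper bound on $h_F(J)$ in terms of the height of the branch points, and therefore in terms of $N_X$.

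I expect the main obstacle to be the passage from the branch point configuration to the Faltings height with \emph{fully effective, explicit} constants of the right (polynomial in $N_X$) shape. Two technical difficulties arise here. First, the place above $p$ and, in positive residue characteristic dividing the degree, the wild ramification of the cyclic cover: one must check that semi-stability of $X$ already controls these contributions, so that the exponent $\kappa_3$ does not blow up and the archimedean contributions (Faltings' delta invariant and the Arakelov self-intersection) are bounded by explicit functions of the branch-point height — this is where de Jong's formulas, valid for cyclic covers, do the work, and where one must be careful that they extend beyond the hyperelliptic ($p=2$) case. Second, the descent to a controlled field extension and the comparison of heights and conductors under that extension must be made uniform; this is routine in principle but needs the semi-stable hypothesis to keep the conductor from growing uncontrollably. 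Once these are in place, the final inequality $\log D_X \leq c_3' N_X^{\kappa_3}$ follows by combining the reduction step, the unit-equation height bound from Section \ref{secunits}, and the Arakelov-theoretic estimates, with all constants traced through explicitly.
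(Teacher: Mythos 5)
Your overall route coincides with the paper's: reduce via Proposition \ref{propd} (iv) to bounding $h_F(J)$, use the Par{\v{s}}in--Oort--de Jong--R\'emond construction to express that height through cross-ratios of branch points, which solve unit equations over a controlled extension, and bound those solutions with the logarithmic-forms estimates of Section \ref{secunits}. Two points need correction, though. First, the passage from the branch-point height $\mu_X$ to $h_F(J)$ is not carried by de Jong's Arakelov formulas, so your worry about extending them beyond $p=2$ is misplaced: in the paper this step is Proposition \ref{proph} (i), which rests on R\'emond's explicit theta-height bounds for Jacobians of cyclic prime covers together with Pazuki's explicit Bost--David comparison of theta and Faltings heights. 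De Jong's formulas enter only in Proposition \ref{propd} (i), i.e. in the effective refinement (\ref{eq:thm2b}) for $q=2$; in the general case the constant $c_3'$ is in fact not effective, because Proposition \ref{propd} (iv) uses a non-effective lower bound for Faltings' delta invariant.

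Second, and this is the genuine gap: to apply Proposition \ref{prope} with constants depending only on $K$ and $g$ you must control the field $L$ generated by (four) branch points --- both its relative degree and, crucially, its ramification, since Lemma \ref{lem:dl} bounds $D_L$ only when $L/K$ is unramified outside $S$. You attribute this control to semi-stability (``keeps the conductor from growing uncontrollably''), but semi-stability plays no role here; it is used only to invoke Proposition \ref{propd} (iv). The actual mechanism in the paper is: each ramification point of $\varphi$ maps, under the embedding of $X$ into $J$ based at a ramification point, into the $q$-torsion $J[q]$, because $\varphi^*(\varphi(P'))=q(P')$; hence $L\subseteq K(J[q])$, and since $J$ has good reduction outside $S$ and $S$ contains the places above $q$, the criterion of Serre--Tate shows $L$ is unramified outside $S$, while Hurwitz (at most $2g+2$ branch points, $q\leq 2g+1$) and the Galois action give $[L:K]\leq 24g^4$ and $N_S\leq N_X(2g+1)^d$. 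Without this argument your plan cannot bound $\mu_X$ polynomially in $N_X$ and $D_K$, so you should make it explicit rather than treat the field control as routine.
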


So far the constant $c_3'$ is only effective for those $X$ in Theorem \ref{thm2} which are in addition hyperelliptic, see  (\ref{eq:thm2b}) for a discussion of the effectivity of $c_3'$.

Next, we  consider curves of genus one or two. We recall that if $X=E$ is an elliptic curve over $K$, then $D_X$ coincides with the usual minimal discriminant $\Delta_E$ of $E$. Hence we see that our next theorem establishes in particular an effective exponential version of Szpiro's classical discriminant conjecture which we stated in (\ref{szpiroconj}).

\begin{theorem}\label{thm3}
If $X$ has genus one, then $\log D_X\leq c_4N_X^{\kappa_4}.$
\end{theorem}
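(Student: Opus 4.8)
The plan is to prove Theorem \ref{thm3} by reducing the estimate for the discriminant $D_X$ of an arbitrary genus one curve $X$ over $K$ to the corresponding estimate for the minimal discriminant $\Delta_E$ of its Jacobian elliptic curve $E=\textnormal{Pic}^0(X)$, and then to bound $\log\Delta_E$ effectively in terms of $N_E$ using the theory of logarithmic forms. First I would use Proposition \ref{propde} (which rests on Liu--Lorenzini--Raynaud \cite{lilora:genusone}) to compare the local discriminant exponents $\delta_v$ of $X$ with the local exponents of $E$: since $X$ and $E$ have the same minimal regular model away from a controlled set of places, and since the conductor of $E$ divides $N_X$ with a uniformly bounded quotient (via Lockhart--Rosen--Silverman \cite{lorosi:conductor}), it suffices to establish $\log\Delta_E\leq c N_E^\kappa$ for elliptic curves $E$ over $K$.

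For the core estimate on $\log\Delta_E$ I would split according to the reduction type of $E$. If $E$ has everywhere potential good reduction (equivalently integral $j$-invariant), then Pesenti--Szpiro \cite{pesz:discriminant} together with the Kodaira--N\'eron classification \cite{neron:models} gives $\Delta_E\mid N_E^5$, which is far stronger than needed. Otherwise $j_E$ is non-integral at some place, and the idea is to exploit the Tate parametrization: at each place $v$ where $E$ has potential multiplicative reduction, $v(\Delta_E)$ is controlled (up to a bounded factor and a bounded additive constant coming from the finitely many Kodaira types) by $-v(j_E)=\textnormal{ord}_v$ of the denominator of $j_E$. Writing $j_E$ and $j_E-1728$ in terms of a generalized unit equation over a fixed finite extension of $K$ (the field of $2$-torsion, or the field over which $E$ acquires semi-stable reduction, whose degree is bounded in terms of $K$ alone), the $\lambda$-invariant $\lambda$ satisfies $\lambda+(1-\lambda)=1$ with $\lambda,1-\lambda$ being $S$-units for $S$ the set of places of bad reduction. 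Section \ref{secunits} provides effective logarithmic-forms bounds for the heights of solutions of such unit equations, and these translate into a bound $\log\Delta_E\leq c_4 N_E^{\kappa_4}$ because the size of $S$, hence of $N_E$, enters polynomially while the height bound from logarithmic forms is of the shape (polynomial in $|S|$) times (product of the norms $N_v$), the latter being at most $N_E$.

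The main obstacle I expect is the passage from potential to actual reduction, i.e. controlling the base-change behaviour carefully enough to keep the constants effective and dependent only on $K$. Concretely, one must bound the degree and the discriminant of the field $L/K$ over which $E$ becomes semi-stable: by the criterion of N\'eron--Ogg--Shafarevich this degree divides a universal number (dividing $24$ or tied to the order of the image of inertia), so $[L:K]$ is bounded by an absolute constant, but the ramification of $L/K$ over the places of bad reduction needs to be tracked so that the $S$-unit equation over $L$ has its relevant quantities — the cardinality of the set $S_L$ of places above $S$, and the regulator/discriminant contributions feeding into the logarithmic-forms estimate — all controlled by $K$ and $N_E$. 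Once this bookkeeping is in place, combining it with the case of integral $j$-invariant and with the comparison of $D_X$ to $\Delta_E$ from the first paragraph yields the theorem; I would collect the resulting explicit values of $c_4$ and $\kappa_4$ at the end, as promised after the statement.
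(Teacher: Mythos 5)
Your strategy is sound and ends up in the same place as the paper, but the middle step is genuinely different. The paper does not split $v(\Delta_E)$ place by place via the Tate parametrization; instead it uses the global Arakelov--Noether identity of Faltings (via Proposition~\ref{propde}~(iii)--(iv)), which writes $\log D_X$ as $12dh_F(E)+\log\gamma_X$ plus an explicit archimedean term, bounds the unstable discriminant $\gamma_X$ by $5\log N_X+O(d)$ using Pesenti--Szpiro and the Kodaira--N\'eron table, and then bounds $h_F(E)\leq\frac12\mu_E+4$ via Proposition~\ref{prophe}. Both arguments ultimately pass through the $\lambda$-invariant and the $S$-unit equation in $K(E[2])$, and both invoke Pesenti--Szpiro for the potential-good-reduction contribution, so your chain $\log\Delta_E\lesssim h(j_E)\lesssim h(\lambda)\lesssim\mu_U$ plus logarithmic forms does deliver a bound of the required shape; your route is more elementary in that it avoids the Noether formula, at the cost of tracking the Kodaira-type corrections locally. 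One small correction to your first paragraph: you do not need Lockhart--Rosen--Silverman to compare $N_E$ with $N_X$ or any fuzzy statement about ``the same minimal regular model away from a controlled set of places'' --- Proposition~\ref{propde}~(i) and (ii) already give the clean identities $D_X=D_E=\Delta_E$ and $N_X=N_E$ for genus one curves, so the reduction to the Jacobian is exact, not merely up to bounded quotients.
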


We discuss the relation of this result to exponential versions of the $abc$-conjecture over $\QQ$ of Stewart-Tijdeman \cite{stti:abc} and Stewart-Yu \cite{styu:abc1,styu:abc2}. In this discussion we denote by $a,b,c$  non-zero coprime integers. We suppose $a+b=c$ and we let $S_\QQ(a,b,c)$ be the product of all rational prime divisors of $abc$. The elliptic curve $E$ over $\QQ$ with affine plane model $y^2=x(x+a)(x-b)$ satisfies $N_{E}\leq 2^8S_\QQ(a,b,c)$ and  $\left|abc\right|^2\leq 2^8\Delta_{E}$, see \cite[p.257]{silverman:aoes}. 
Thus Theorem \ref{thm3} gives an effective absolute  constant $c_4'$ such that $$ \max(\left|a\right|,\left|b\right|,\left|c\right|)\leq \exp(c_4'S_\QQ(a,b,c)^{22}).$$ The exponent 22 in this exponential version of $(abc)$ is weaker compared to the exponents in \cite{stti:abc,styu:abc1,styu:abc2}. On the other hand, \cite{stti:abc,styu:abc1,styu:abc2} combined with the known links  $(abc)\Rightarrow (1)$ do not imply an upper bound for $\Delta_E$ in terms of $N_E$ for arbitrary elliptic curves $E$ over $\QQ$. The reason is that these classical links, see for example \cite[p.260]{silverman:aoes}, \cite[p.429]{bogu:diophantinegeometry}, are not compatible with exponential estimates. The problem is always of the following elementary kind: If $m,n$ are positive integers, then $m^2\leq mn$ implies $m\leq n$, but $m^2\leq \exp(mn)$ does not imply in general $m\leq \exp(n)$. This concludes our discussion.  

The following corollary is based on Theorem \ref{thm1} and Theorem \ref{thm2}, and on results of Liu and Saito for discriminants of genus two curves.

\begin{corollary}\label{corgenustwo}
Suppose $X$ has genus two. Then the following statements hold.
\begin{itemize}
\item[(i)] If $X$ is semi-stable or has a $K$-rational Weierstrass point, then $\log D_X\leq c_5N_X^{\kappa_5}.$
\item[(ii)] In general $X$ satisfies
$\log D_X\leq \exp(c_6(\log N_X)^{\kappa_6}).$
\end{itemize}
\end{corollary}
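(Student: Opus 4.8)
The plan is to reduce the genus two case of Conjecture $(D)$ (in its effective exponential form) to the already-established results for hyperelliptic curves, namely Theorem \ref{thm1} and Theorem \ref{thm2}, by bridging between the discriminant $D_X$ (defined via Deligne--Saito/Mumford's isomorphism) and the minimal discriminant $\Delta_C$ (defined via Weierstrass models). The starting observation is that every genus two curve $X$ over $K$ is hyperelliptic, by \cite[Proposition 7.4.9]{liu:ag}, so we may set $C=X$ and speak of both $D_X$ and $\Delta_C$. Moreover, a genus two curve has a $K$-rational Weierstrass point precisely when the degree-$2$ map $X\to\PP^1_K$ does, so in case (i) the hypothesis of Theorem \ref{thm1}(i) is available whenever $X$ has such a point; in the remaining semi-stable subcase of (i), and in all of (ii), we must instead invoke Theorem \ref{thm1}(ii) or Theorem \ref{thm2}, which only give a doubly-exponential-type bound $\log\Delta_C\leq\exp(c_2(\log N_C)^{\kappa_2})$ in general but a single-exponential bound $\log D_X \le c_3' N_X^{\kappa_3}$ in the semi-stable case.

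The key step is a two-sided comparison, at each finite place $v$ of $K$, between the local discriminant exponent $\delta_v$ entering $D_X$ and the minimal Weierstrass discriminant exponent $n_v$ entering $\Delta_C$. Here I would cite the comparison results of Liu \cite{liu:conductor} and Saito \cite{saito:genus2} for genus two curves — precisely the inputs the corollary's preamble advertises. Concretely, Liu's work on the conductor and discriminant of genus two curves, together with Saito's formula relating the Deligne discriminant to the Weierstrass discriminant in genus two (valid including residue characteristic $2$), should give inequalities of the shape $\delta_v \le \alpha\, n_v + \beta\, f_v$ and $n_v \le \alpha'\delta_v + \beta' f_v$ with absolute constants $\alpha,\beta,\alpha',\beta'$, where $f_v$ is the conductor exponent of $\mathrm{Pic}^0(X)$. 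Summing $N_v^{(\cdot)}$ over all $v$ (and absorbing the finitely many places above $2$, whose contribution is bounded in terms of $[K:\QQ]$), this yields $D_X \le \Delta_C \cdot N_X^{O(1)}$ and $\Delta_C \le D_X\cdot N_X^{O(1)}$, hence also $M_X$, $N_C$ and $N_X$ are comparable up to effective powers (using \cite[Theorem 0.1]{lorosi:conductor} as in Section \ref{secconjectures}, since $M_X = M_C$).

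With this dictionary in place the deduction is immediate. For (i): if $X$ has a $K$-rational Weierstrass point, combine $\log D_X \le \log\Delta_C + O(\log N_X)$ with Theorem \ref{thm1}(i), $\log\Delta_C\le c_1 N_C^{\kappa_1}\le c_1 N_X^{\kappa'}$, to obtain $\log D_X\le c_5 N_X^{\kappa_5}$; if instead $X$ is semi-stable, then $C=X$ is a semi-stable hyperelliptic — in particular cyclic of prime degree $2$ — curve of genus $2$, so Theorem \ref{thm2} gives $\log D_X\le c_3' N_X^{\kappa_3}$ directly, with the effectivity of $c_3'$ guaranteed by the hyperelliptic case noted after Theorem \ref{thm2}. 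For (ii): in the general (possibly non-semi-stable) case, feed $\log\Delta_C\le\exp(c_2(\log N_C)^{\kappa_2})$ from Theorem \ref{thm1}(ii) through the comparison to get $\log D_X\le \exp(c_6(\log N_X)^{\kappa_6})$, the lower-order term $O(\log N_X)$ being harmless inside the exponential.

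The main obstacle I anticipate is the wild-ramification bookkeeping at places above $2$: both $\delta_v$ and $n_v$ can behave badly there, and one needs Saito's genus-two discriminant-conductor formula (and Liu's local analysis, via \cite{maugeais:discriminant} for the conductor exponent side) to control $\delta_v$ and $n_v$ simultaneously by $f_v$ plus an absolute constant, uniformly over such $v$. Everything else — passing between $N_C$, $N_X$, $M_X$, summing local inequalities, and chaining with Theorems \ref{thm1} and \ref{thm2} — is routine once that local comparison is secured.
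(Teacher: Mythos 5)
Your proposal is correct and follows essentially the same route as the paper: genus two implies hyperelliptic, the semi-stable case is handled by the refinement (\ref{eq:thm2b}) of Theorem \ref{thm2}, and the remaining cases by comparing $D_X$ with the minimal discriminant and invoking Theorem \ref{thm1}(i) resp.\ (ii). The local comparison you flag as the main obstacle is already established in the paper as Proposition \ref{propd}(iii), which gives the exact divisibility $D_X\mid\Delta_X$ (i.e.\ $\delta_v\le n_v$ at every place, with no $f_v$-correction and only that one direction needed), so the deduction is immediate without any two-sided bookkeeping at places above $2$.
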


Before we state Theorem \ref{thm4}, we recall the $abc$-conjecture $(abc)$ of Masser-Oesterl\'e \cite{masser:abc} over number fields.  Let $d$ be the degree of $K$ over $\QQ$. For any non-zero triple $\alpha,\beta,\gamma\in K$ we denote by $h(\alpha,\beta,\gamma)$ the usual absolute logarithmic Weil height (see \cite[1.5.4]{bogu:diophantinegeometry}) of $\alpha,\beta,\gamma$ viewed as a point in $\mathbb P^2(K)$. We define $$H_K=\exp(dh), \ \ S_K(\alpha,\beta,\gamma)=\prod N_v^{e_v}$$ with the product extended over all finite places $v$ of $K$ such that $v(\alpha)$, $v(\beta)$ and $v(\gamma)$ are not all equal, where $e_v=v(p)$ for $p$ the residue characteristic of $v$. We mention that Masser \cite{masser:abc} added the ramification index $e_v$ in the definition of the support $S_K$ to obtain a natural behavior of $S_K$ under field extensions. Let $D_K$ be the absolute value of the discriminant of $K$ over $\QQ$.

\vspace{0.3cm}
\noindent{\bf Conjecture ($abc$).}
\emph{For any integer $n\geq 1$, and any real $r,\epsilon>1$, there exists a constant $c$, depending only on $n,r,\epsilon$, such that if $K$ is a number field of degree $d\leq n$, and $\alpha,\beta,\gamma\in K$ are non-zero with $\alpha+\beta=\gamma$, then $H_K(\alpha,\beta,\gamma)\leq cS_K(\alpha,\beta,\gamma)^rD_K^\epsilon.$\bigskip}

In the sequel $(abc)^*$ refers to the following weaker conjecture. For any integer $n\geq 1$ there are constants $c,\kappa$, depending only on $n$, such that if $K$ is a number field with degree $d\leq n$ and $\alpha,\beta,\gamma\in K$ are non-zero with $\alpha+\beta=\gamma$, then $H_K(\alpha,\beta,\gamma)\leq c(S_K(\alpha,\beta,\gamma)D_K)^{\kappa}.$ 

\begin{theorem}\label{thm4}
Suppose $g\geq 2$. Then the following statements hold.
\begin{itemize}
\item[(i)]  $(abc)^*$ implies \textnormal{(}$\Delta$\textnormal{)} for hyperelliptic curves over $\QQ$ of genus $g$ with a $\QQ$-rational Weierstrass point.
\item[(ii)]  $(abc)^*$ implies $(D)$ for semi-stable curves over $K$ of genus $g$ which are cyclic covers of prime degree.
\item[(iii)] Suppose there are $r,\epsilon>1$ such that $(abc)$ holds for $n=6d,r,\epsilon$ with the constant $c$. Then there exists an effective constant $c'$, depending only on $d,r,\epsilon,c$, such that any genus one curve $X$ over $K$ satisfies $\log D_X\leq 6(dr+\epsilon)\log N_X+6\epsilon\log D_K+c'.$
\end{itemize}
\end{theorem}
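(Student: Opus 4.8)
The overall strategy is to re-run the proofs of Theorems \ref{thm1}(i), \ref{thm2} and \ref{thm3} while replacing, in each of them, the one step that invokes the theory of logarithmic forms — namely the estimate of Section \ref{secunits} for the height of the solutions of a unit equation $\alpha+\beta=\gamma$ in a finite extension $L$ of $K$ whose degree is bounded solely in terms of $g$ (and $[K:\QQ]$) — by an application of the $abc$-conjecture over $L$. Logarithmic forms bound $\log H_L(\alpha,\beta,\gamma)$ by a polynomial in $\log S_L(\alpha,\beta,\gamma)$, whereas $(abc)^*$ (resp. $(abc)$) bounds $H_L(\alpha,\beta,\gamma)$ itself by a power of $S_L(\alpha,\beta,\gamma)D_L$; substituting the latter into the dictionary of Sections \ref{secheights}--\ref{secdiscriminants} between discriminants (and Faltings heights) and ramification invariants over $\PP^1$ therefore upgrades the exponential conclusions of Theorems \ref{thm1}--\ref{thm3} to the conjectural polynomial ones. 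Two observations make this work and recur in each part: the bad primes of $L$ lie over the bad primes of the curve together with the residue characteristics dividing a bounded integer, so $\log S_L$ is at most a fixed multiple (depending only on $g$ and $[K:\QQ]$) of $\log N_X$; and the ramification of $L/K$ being controlled in the same way, the conductor--discriminant formula bounds $\log D_L-[L:K]\log D_K$ by a fixed multiple of $\log N_X$ as well.

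For (i) I would start from the effective Shafarevich theorem for hyperelliptic curves with a rational Weierstrass point of \cite{rvk:hyperelliptic} underlying Theorem \ref{thm1}(i). Writing $C\colon y^2=f(x)$ with $\deg f=2g+1$ and $f$ split over $L:=\QQ(\text{roots of }f)$, so $[L:\QQ]\leq(2g+1)!$ depends only on $g$, one uses the Weierstrass point at infinity together with the constructions of Sections \ref{secheights}--\ref{secdiscriminants} to normalize three of the roots, after which the remaining cross-ratios, their pairwise differences, and their differences with $0$ and $1$ all become $S$-units of $L$, where $S$ collects the places of $L$ over the bad primes of $C$ and over $2$. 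Each such unit $u$ satisfies $u+(1-u)=1$, so $(abc)^*$ over $L$ bounds $h(u)$ by $\kappa\log N_C+c$; feeding these bounds into the expression for $\Delta_C$ in terms of the roots yields $\log\Delta_C\leq\kappa'\log N_C+c'$, i.e. $(\Delta)$. Part (ii) is the same substitution carried out inside the proof of Theorem \ref{thm2}: the refinement of the method of Par{\v{s}}in \cite{parshin:shafarevich}, Oort \cite{oort:shafarevich} and de Jong--R\'emond \cite{jore:shafarevich} reduces the Faltings height of $\textnormal{Jac}(X)$, for $X$ a semi-stable cyclic cover of $\PP^1_K$ of prime degree, to the heights of solutions of unit equations in an extension $L/K$ of degree bounded in terms of $g$; bounding these by $(abc)^*$ yields Conjecture $(H)$ for these Jacobians, and Proposition \ref{propd}(iv), which rests on the Arakelov-theoretic results of Faltings \cite{faltings:arithmeticsurfaces}, converts $(H)$ into $(D)$ for these curves. (As the paper notes, effectivity of the constant here is lost unless $X$ is moreover hyperelliptic.)

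For (iii) I would follow the proof of Theorem \ref{thm3} and keep the constants explicit. First, by Proposition \ref{propde}, which is based on Liu--Lorenzini--Raynaud \cite{lilora:genusone}, $D_X$ coincides with the minimal discriminant $\Delta_E$ of $E:=\textnormal{Jac}(X)$ (up to a factor supported on, and bounded on, the primes weighted by $a_v$ in $N_X$), so it suffices to bound $\log\Delta_E$ in terms of $N_X$ and $D_K$. At the places where $E$ has potential good reduction, the classification of Kodaira--N\'eron \cite{neron:models} refined by Pesenti--Szpiro \cite{pesz:discriminant} gives $v(\Delta_E)\leq 5 f_v$; at the places where $E$ has potential multiplicative reduction one passes to $L:=K(E[2])$, whose Galois group embeds into $\textnormal{GL}_2(\FF_2)$, so $[L:K]\mid 6$ and $[L:\QQ]\leq 6d$. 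Over $L$ the curve $E$ has a Legendre model $y^2=x(x-1)(x-\lambda)$ with $\lambda,1-\lambda$ $S$-units of $L$, $S$ being the places of $L$ over the bad primes of $E$ and over $2$, and the Kodaira--N\'eron classification expresses $v(\Delta_E)$ at each such place through the local contributions of $\lambda$ and $1-\lambda$. One then applies $(abc)$ over $L$ with $n=6d$ to $\lambda+(1-\lambda)=1$ and bounds the two resulting factors: $S_L(\lambda,1-\lambda,1)$ divides $\bigl(\prod_{v\mid N_E}N_v^{e_v}\bigr)^{[L:K]}$ up to a bounded factor from the places over $2$, with $\log\prod_{v\mid N_E}N_v^{e_v}\leq d\log N_X$; and $D_L$ divides $\N(\mathfrak{d}_{L/K})\,D_K^{[L:K]}$ with the relative discriminant $\mathfrak{d}_{L/K}$ supported on the bad primes of $E$ and on $2$ and with exponents controlled by the $f_v$. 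Assembling the three contributions and carrying out this accounting gives
\[ \log D_X=\log\Delta_E\;\leq\;6(dr+\epsilon)\log N_X+6\epsilon\log D_K+c', \]
with $c'$ effective and depending only on $d,r,\epsilon$ and the constant $c$ in $(abc)$.

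The conceptual heart of the argument, and the main obstacle, is not the appeal to $(abc)$ but the dictionary of Sections \ref{secheights}--\ref{secdiscriminants}: one must express the discriminant of $C$ or $X$ — and, for (ii), the Faltings height of its Jacobian — through cross-ratios of the branch points of a degree-$p$ cover $X\to\PP^1_K$, uniformly in the relevant parameters and, for (iii), completely explicitly. In particular, for part (iii) the delicate point is to control the minimal discriminant of $E$ over $K$ from the Legendre data over $L$ — taming the non-minimality correction and the ramification of $L/K$ precisely enough to produce the coefficients $6(dr+\epsilon)$ and $6\epsilon$ rather than merely bounded multiples of them. For parts (i) and (ii) the corresponding geometric inputs are already packaged in \cite{rvk:hyperelliptic} and in the Par{\v{s}}in-style argument combined with Faltings' results, so there the only genuinely new point is to thread $(abc)^*$ through them and check that the resulting constants depend only on $\QQ$ and $g$, respectively on $K$ and $g$.
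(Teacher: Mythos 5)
Your strategy is the paper's: in each of Theorems \ref{thm1}(i), \ref{thm2} and \ref{thm3} the only input from the theory of logarithmic forms is the bound on $\mu_X$ (resp. $\mu_E$) coming from the unit equation $\lambda+(1-\lambda)=1$ in a bounded-degree extension $L$, and the paper proves Theorem \ref{thm4} precisely by substituting Proposition \ref{prope}(iii) (the $abc$-based bound, with Lemma \ref{lem:dl} controlling $D_L$) for Proposition \ref{prope}(i)--(ii). For (i) and (ii) your outline matches the paper: Proposition \ref{propd}(ii) (which already encodes the scale normalization from the effective Shafarevich machinery of \cite{rvk:hyperelliptic} -- the cross-ratios alone would not suffice, one also needs the $S$-unit model discriminant of height $\leq(2N_X)^{4g\nu}$), respectively Proposition \ref{proph}(i) plus Proposition \ref{propd}(iv), reduce everything to a bound for $\mu_X$ linear in $\log N$, and $(abc)^*$ supplies it. For (iii) you deviate mildly: the paper does not bound $v(\Delta_E)$ at the potentially multiplicative places directly from the Legendre data; it uses the packaging of Proposition \ref{propde}(iii)--(iv), i.e. Faltings' explicit Noether-type formula together with $h_F(E)\leq\tfrac{1}{2}\mu_E+4$ give $\log D_X\leq 6d\mu_E+\log\gamma_X+57d$, the unstable discriminant $\gamma_X$ is evaluated via Kodaira--N\'eron \cite{neron:models}, and Pesenti--Szpiro \cite{pesz:discriminant} handles the potential-good places; the exact coefficients $6(dr+\epsilon)$ and $6\epsilon$ then fall out of splitting the bad places into $T_0$ (potential good, odd), $T_1$ (type $I_n^*$, odd) and $T_2$ (type $I_n$), noting that only $T_1\cup T_2$ and the places over $6$ enter the support of the unit equation and that $f_v\geq 2$ at additive places. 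Your direct accounting through $j_E$ and the Legendre $\lambda$ (with non-minimality and residue-characteristic-$2$ corrections) can be made to yield the same coefficients, but exactly this bookkeeping -- which you correctly identify as the delicate point and leave unexecuted -- is what Proposition \ref{propde} and the $T_0,T_1,T_2$ splitting carry out in the paper; also note that by Proposition \ref{propde}(i)--(ii) one has $D_X=\Delta_E$ and $N_X=N_E$ exactly, so no correction factor at the $a_v$-weighted places is needed. In short: same approach, with your part (iii) trading the Arakelov/Noether-formula packaging for a hands-on local discriminant computation that would work but is not spelled out.
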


In particular, (iii) gives that $(abc)^*$ implies ($D$) for genus one curves over $K$. One can derive from Lockhart's \cite[Proposition 4.3]{lockhart:discriminant} that $(abc)$ implies ($\Delta$) for hyperelliptic curves over $K$ with all Weierstrass points $K$-rational. Furthermore, the classical link  through Mordell equations, applied in \cite{lockhart:discriminant}, leads also to a version of Theorem \ref{thm4} (iii). More precisely,  the arguments in \cite[Section 4]{lockhart:discriminant} can be used to deduce the following. If $(abc)$ holds for $n=d,r,\epsilon$, then there is a constant $c''$, depending only on $K$, $r$, $\epsilon$, such that any genus one curve $X$ over $K$ satisfies
\begin{equation}\label{eq:lock}
D_X\leq c''N_X^{6dr}.
\end{equation}
This gives the better exponent $6dr$ of $N_X$, but also a constant $c''$ that depends in an unspecified 
way on $D_K$.  Masser \cite{masser:szpiro} showed that the exponent $6r>6$ is best possible for Szpiro's discriminant conjecture over $\QQ$. We remark that the exponents of $N_X$ in (iii) and (\ref{eq:lock}) depend on $d$, since $S_K$ is defined with exponents $e_v\leq d$ which do not appear in the definition of $N_X$.

Finally, we shall deduce from Theorem \ref{thm4} the following corollary.

\begin{corollary}\label{corlowgenus}
$(abc)^*$ implies $(D)$ for genus two curves over $K$ which are semi-stable, or which are defined over $\QQ$ and have a $\QQ$-rational Weierstrass point.
\end{corollary}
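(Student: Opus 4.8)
The plan is to reduce the statement to Theorem~\ref{thm4}~(i) and~(ii) by exploiting that every genus two curve is hyperelliptic, hence a cyclic cover of $\PP^1_K$ of prime degree $2$. So let $X$ be a curve over $K$ of genus two. By \cite[Proposition 7.4.9]{liu:ag} the curve $X$ is hyperelliptic and therefore admits a finite morphism $X\to\PP^1_K$ of degree $2$; as $K$ has characteristic zero this morphism is geometrically Galois with group $\ZZ/2\ZZ$, which is cyclic, and $2$ is prime. Thus every genus two curve over $K$ is a cyclic cover of prime degree. In particular, if $X$ is semi-stable then it is a semi-stable cyclic cover of prime degree of genus $g=2\geq 2$, so Theorem~\ref{thm4}~(ii) applies directly and gives, assuming $(abc)^*$, a bound $D_X\leq c\,N_X^{\kappa}$ with $c,\kappa$ depending only on $K$. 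This settles the semi-stable case.

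It remains to treat an $X$ that is defined over $\QQ$ and has a $\QQ$-rational Weierstrass point. Then $C:=X$ is a hyperelliptic curve over $\QQ$ of genus two with a $\QQ$-rational Weierstrass point, so Theorem~\ref{thm4}~(i) with $g=2$ gives, under $(abc)^*$, absolute constants $c_1,\kappa_1$ with $\Delta_C\leq c_1 N_C^{\kappa_1}$. What is needed now is to pass from the Weierstrass discriminant $\Delta_C$ to the Deligne--Saito discriminant $D_X$, and for this I would appeal to the local comparison of the two discriminants of a genus two curve: Saito's conductor--discriminant formula \cite{saito:genus2} identifies the local exponent $\delta_v$ of $D_X$ with $-\mathrm{Art}$ of the minimal regular model at $v$, while Liu \cite{liu:conductor} (for $v$ of odd residue characteristic) and Maugeais \cite[Proposition 2.7]{maugeais:discriminant} (for $v$ above $2$) compare this Artin conductor with the local exponent $n_v$ of $\Delta_C$ and with the conductor exponent $f_v$. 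Combining these I expect an inequality of the shape $\delta_v\leq a\,n_v+b\,f_v+b$ at every finite place $v$, with $a,b$ depending only on $g=2$. Multiplying over all $v$, using that the conductors $N_C$ and $N_X$ coincide and invoking $M_X\leq N_X\leq M_X^{\kappa'}$ from \cite[Theorem 0.1]{lorosi:conductor} to absorb the places of bad reduction, one obtains $D_X\leq c_2\,\Delta_C^{\,a}N_X^{\,b'}$ for suitable absolute $c_2,b'$. Plugging in the bound from Theorem~\ref{thm4}~(i) yields $D_X\leq c_2(c_1 N_C^{\kappa_1})^{a}N_X^{b'}=c_3\,N_X^{\,a\kappa_1+b'}$, which is the required instance of $(D)$. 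This is precisely the argument behind Corollary~\ref{corgenustwo}~(i), with Theorem~\ref{thm1} and Theorem~\ref{thm2} there replaced by Theorem~\ref{thm4}~(i) and~(ii) here.

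The one step I expect to demand genuine care is the local comparison $\delta_v\leq a\,n_v+b\,f_v+b$ at the prime $v=2$ of $\QQ$, since the clean relation of Liu \cite{liu:conductor} is established only at odd residue characteristic; over $\QQ$ there is, however, only this single bad residue characteristic to handle, and the desired estimate should follow from Maugeais' analysis \cite{maugeais:discriminant} together with the comparison there between Kausz's exponent $\Lambda_v$ and $n_v$. Apart from this, and the routine step of assembling the local inequalities into a global one, the corollary contributes nothing beyond Theorem~\ref{thm4}: its entire substance is already contained in that theorem.
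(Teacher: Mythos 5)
Your overall reduction is exactly the paper's: since every genus two curve over $K$ is hyperelliptic by \cite[Proposition 7.4.9]{liu:ag}, hence a cyclic cover of $\PP^1_K$ of prime degree $2$, the semi-stable case follows directly from Theorem \ref{thm4} (ii), and the whole argument is Corollary \ref{corgenustwo} with Theorems \ref{thm1} and \ref{thm2} replaced by Theorem \ref{thm4}. That part is correct and is precisely what the paper does.

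The gap is in the second case, in the passage from $\Delta_C$ to $D_X$. You only \emph{expect} a local inequality of the shape $\delta_v\leq a\,n_v+b\,f_v+b$, and you yourself flag the place $v=2$ as the unresolved point, proposing to extract it from Maugeais and the comparison with Kausz's $\Lambda_v$; as written this step is not proved, and it is the crux of the non-semi-stable case. The paper already contains the clean statement you need: Proposition \ref{propd} (iii) says that for a genus two curve $D_X$ divides $\Delta_X$, i.e.\ $\delta_v\leq n_v$ at \emph{every} finite place, including residue characteristic $2$. Its proof factors the Siegel modular form of weight $10$ so that $\delta'_v=\delta_v+\delta''_v$ with $\delta''_v\geq 0$, and uses \cite[Corollaire]{liu:conductor} together with \cite[Proposition 2 (d)]{liu:conductor} (after passing to the strict henselization) to get $n_v\geq\delta'_v$; no conductor term, no appeal to Maugeais or $\Lambda_v$, and no separate treatment of $v=2$ is needed. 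With that proposition in hand your remaining machinery is also superfluous: since $C=X$ is the same curve, $N_C=N_X$ by definition of the conductor in Section \ref{secconjectures}, so there is nothing to absorb via \cite[Theorem 0.1]{lorosi:conductor}; one simply combines $D_X\leq\Delta_X$ with the bound $\Delta_X\leq c_1N_X^{\kappa_1}$ from Theorem \ref{thm4} (i) to conclude. So the skeleton of your argument is the intended one, but the key local comparison should be replaced by a citation of Proposition \ref{propd} (iii) rather than left as an expectation.
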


To discuss an application of our discriminant estimates we take again  a curve $X$ over $K$ of genus $\geq 1$. Let $B$ be the spectrum of the ring of integers of $K$. The discriminant $D_X$ of $X$ is related to other global invariants of the minimal regular model $\mathcal X\to B$ of $X$ over $B$. To define these invariants we take a closed point $v$ of $B$, and we let $B_{\bar{v}}$ be the strict localization of $B$ at a geometric point $\bar{v}$ of $B$ over $v$.  We denote by $X_{\bar{\eta}}$ and by $ X_{\bar{v}}$ the geometric generic and the special fiber of the base change $\mathcal X\times_B B_{\bar{v}}\to B_{\bar{v}}$ respectively. Let $\ell$ be a rational prime with $\ell\nmid N_v$, let $\chi$ be the $\ell$-adic Euler-Poincar\'e characteristic  and  let $\textnormal{Sw}_{B_{\bar{v}}}H^1(X_{\bar{\eta}},\mathbb Q_\ell)$ be the Swan conductor, see \cite[p.422]{bloch:eulerchar}. It turns out that the Artin conductor  
$
\textnormal{Art}_v=\chi(X_{\bar{\eta}})-\chi(X_{\bar{v}})-\textnormal{Sw}_{B_{\bar{v}}}H^1(X_{\bar{\eta}},\mathbb Q_\ell)
$
does not depend on the choice of $\ell$.  Bloch \cite[Lemma 1.2(i)]{bloch:conductor} leads to
$
-\textnormal{Art}_v=m_v-1+f_v,
$
where $m_v$ denotes the number of irreducible components of $X_{\bar{v}}$, counted up to multiplicities, and $f_v$ denotes the usual conductor exponent at $v$ of the Jacobian of $X$. Let $\delta_v$ be the discriminant exponent of $X$ at $v$ defined in Section \ref{secconjectures}. Saito \cite{saito:conductor} established
$
-\textnormal{Art}_v=\delta_v;
$
he reduced the problem to the semi-stable case proven by Deligne in \cite{deligne:quillen}.
Hence, we see that our results for $D_X$ give in addition estimates  for two other global invariants of $\mathcal X\to B$ in terms of $N_X$. Namely, a lower bound for the global Artin conductor $\sum \textnormal{Art}_v\log N_v$ and an upper bound for the global number of ``additional'' geometrically irreducible components $\sum (m_v-1)$. Here both sums are taken over all closed points $v$ of $B$. This concludes our discussion.

It remains to give the constants.  We conducted some effort to obtain constants reasonably close to the best that can be acquired with the present method of proof. However, to simplify their final form we freely rounded off several of the numbers occurring in our estimates. Let $h_K$ be the class number of the ring of integers of $K$. In Theorem \ref{thm1} we can take for example
$$\kappa_1=(8gd)^4\log (3h_K), \ \ c_1=(3D_K^{h_K})^{\kappa_1}, \ \ \kappa_2=6, \ \ c_2=k_0(4gd)^{15d}D_K^{10},$$
where $k_0$ is an effective absolute constant.
We note that $k_0$ comes from the effective upper bound in \cite[Theorem (ii)]{rvk:hyperelliptic} which is not yet completely explicit. In Theorem \ref{thm2} we can take for instance
\begin{equation*}
\kappa_3=96dg^4, \ \ c'_3=k_3'D_K^{24g^4},
\end{equation*}
where $k_3'$ is a constant depending only on $g$ and $d$.  In general $k_3'$ is not necessarily effective in terms of $g$, since it depends on lower bounds for Faltings' delta invariant which are not known to be effective. However, if $X$ is a semi-stable hyperelliptic curve over $K$ of genus $g\geq 2$, then
\begin{equation}\label{eq:thm2b}
\log D_X\leq c_3N_X^{d(4g)^8}, \ \ c_3=2^{2^{50}9^gd^2}D_K^{24g^4}.
\end{equation} 
In view of the applications in \cite{rvk:height} and \cite{rvk:ed} we conducted some effort to improve the constants for genus one curves. Theorem \ref{thm3} holds with
$$\kappa_4=22d, \ \ c_4=c(d)D_K^{4} \ \ \  \textnormal{or} \ \ \ \kappa_4=162d, \ \ c_4=5^{(18d)^2}D_K^5,$$
where $c(d)$ is an effective constant depending only on $d$. Finally, we mention that in Corollary \ref{corgenustwo} we can take
$$\kappa_5=8^8d^4\log(3h_K), \ \ c_5=\max(c_1(2),c_3(2)), \ \ \kappa_6=6, \ \ c_6=k_0(8d)^{15d}D_K^{10},$$
where $c_1(2)$ and $c_3(2)$ are the explicit constants $c_1$ and $c_3$ with $g=2$ respectively. This concludes our discussion of the constants.

\section{Heights}\label{secheights}

In this section we prove Proposition \ref{proph} and \ref{prophe}. They bound heights of certain curves in terms of an invariant which controls the ramification of the curve over the projective line. Proposition \ref{proph} deals with cyclic covers of prime degree of genus at least two, and Proposition \ref{prophe} deals with elliptic curves.

To state Proposition \ref{proph} we have to introduce some notation. Let $K$ be a number field, and let $X$ be a curve over $K$ of genus $g\geq 2$. We assume there exists a finite morphism $\varphi:X\to \mathbb P^1_K$  of prime degree which is geometrically a cyclic cover. Let $q$ be the degree of $\varphi$. For any  finite extension $L$ of $K$ we let $S(L,q)$ be the set of places of $L$ which divide $q$ or which divide a place of $K$ where $X$ has bad reduction. To ease notation we write $S=S(L,q)$ and we denote by $\mathcal O_S^{\times}$ the $S$-units in $L$. Let $h(f)$ be the usual absolute logarithmic Weil height of any polynomial $f$ with coefficients in $L$, see \cite[1.6.1]{bogu:diophantinegeometry}.  We write $\mu_{S}=\sup(h(\lambda), \lambda \in \mathcal O_S^\times \textnormal{ and } 1-\lambda \in \mathcal O_S^{\times})$. Let $\mathcal R$ be the set of field extensions $L$ of $K$ such that $L$ is the compositum of the fields of definition of four distinct (geometric) ramification points of $\varphi$. We define $$\mu_X=\max(1,\mu_{S(L,q)})$$ 
with the maximum taken over all fields $L\in \mathcal R$.
If $q=2$, then $X$ is a hyperelliptic curve over $K$, and in this case we define the height of a Weierstrass model $\mathcal W=\mathcal W(f,f_2)$  of $X$ over $\spe(\OK)$ by  $h(\mathcal W)=h(f+f_2^2/4)$ for $\OK$ the ring of integers of $K$. Let $h_F$ be the Faltings height, let $\nu=2g+1$ and let $N_X$ be  the conductor of $X$. We now can state the following proposition.
\begin{proposition}\label{proph}
Let $X$ be a curve over $K$ of genus $g\geq 2$. Suppose there is a finite morphism $\varphi:X\to \mathbb P^1_K$ of prime degree $q$ which is geometrically a cyclic cover. Then the following statements hold.
\begin{itemize}
\item[(i)] The Jacobian $J$ of $X$ satisfies $h_F(J)\leq 2^{2^{22}9^g}\mu_X.$
\item[(ii)] If $K=\QQ$, $q=2$ and $X$ has a $\QQ$-rational Weierstrass point, then there is a Weierstrass model $\mathcal W$ of $X$ over $\spe(\ZZ)$ that satisfies the following inequality  $h(\mathcal W)\leq 6\nu(\nu-1)\mu_X+12g\nu\log N_X+6\nu^2.$
\end{itemize}
\end{proposition}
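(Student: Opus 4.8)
The plan is to derive both parts from a single fact about the geometric branch locus of $\varphi$, and then to run the Par\v{s}in--Oort--de Jong--R\'emond machinery for (i) and an explicit reduction of Weierstrass models for (ii).

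\emph{Step 1 (cross-ratios of branch points are $S$-units).} Let $q=\deg\varphi$ and let $b_1,\dots,b_n\in\PP^1(\bar K)$ be the branch points of $\varphi$, so that $n\le 2g+2$ by Riemann--Hurwitz. For any four distinct branch points, the cross-ratio $\lambda$ lies in the number field $L\in\mathcal R$ they generate, and both $\lambda$ and $1-\lambda$ lie in $\mathcal O^\times_{S(L,q)}$. Indeed $\lambda$ is a $\mathrm{PGL}_2$-invariant, so it may be computed on the minimal regular model of $X$ over $\mathcal O_w$ for any place $w\mid v$ of $L$ with $v\nmid q$ and $X$ of good reduction at $v$; there $\varphi$ and its branch divisor reduce well, the $n$ branch points stay pairwise distinct modulo $w$, and hence $\lambda\not\equiv 0,1,\infty\pmod w$. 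Thus $w(\lambda)=w(1-\lambda)=0$ off $S(L,q)$, so $h(\lambda),h(1-\lambda)\le\mu_{S(L,q)}\le\mu_X$; the same applies, up to a small constant factor, to products and differences of normalised branch points built from cross-ratios.

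\emph{Step 2 (proof of (i)).} After a $\bar K$-automorphism of $\PP^1$, put $b_1,b_2,b_3$ at $0,1,\infty$; the remaining $b_i$ become $\lambda_i$ with $h(\lambda_i)$, $h(1-\lambda_i)$ and $h(\lambda_i-\lambda_j)$ all $\le c(g)\mu_X$ by Step 1. Geometrically $X$ is the cyclic cover of prime degree $q$ of $\PP^1$ with this branch locus and prescribed inertia, which is smooth and proper over $\ZZ[\tfrac1q,\lambda_i,\tfrac1{\lambda_i},\tfrac1{1-\lambda_i},\tfrac1{\lambda_i-\lambda_j}]$. Following Par\v{s}in, Oort and de Jong--R\'emond, the Jacobian of the geometric fibre is isogenous, through isogenies of bounded degree, to a product of Jacobians of cyclic covers branched at $\le 4$ points; the Arakelov-theoretic contributions to the Faltings height of each such factor are bounded uniformly once the four branch points are bounded away from $0,1,\infty$ and from each other at every place, i.e.\ once their heights are $\le c(g)\mu_X$. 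Faltings' bound for the variation of the stable Faltings height under isogeny, together with its invariance under extension of the base field, then gives $h_F(J)\le 2^{2^{22}9^g}\mu_X$ after summing the (finitely many, $g$-dependent) contributions.

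\emph{Step 3 (proof of (ii)).} Now $K=\QQ$, $q=2$, and $X$ has a $\QQ$-rational Weierstrass point, placed at $\infty$: $\QQ(X)=\QQ(x)[y]$ with $y^2=f(x)$, $\deg f=\nu=2g+1$, roots $\alpha_1,\dots,\alpha_\nu\in\bar\QQ$. The branch points are $\infty$ and the $\alpha_i$; since $\infty$ is $\QQ$-rational, Step 1 applies to the four points $\infty,\alpha_i,\alpha_j,\alpha_k$ and yields $h(\tfrac{\alpha_i-\alpha_k}{\alpha_j-\alpha_k})\le\mu_X$ for distinct $i,j,k$. Fixing $\alpha_1,\alpha_2$, the normalised roots $\beta_i=\tfrac{\alpha_i-\alpha_1}{\alpha_2-\alpha_1}$ satisfy $h(\beta_i),h(1-\beta_i)\le\mu_X$, and writing $\beta_i-\beta_j=\beta_i\cdot\tfrac{\alpha_i-\alpha_j}{\alpha_i-\alpha_1}$ as a product of two $S$-units gives $h(\beta_i-\beta_j)\le 2\mu_X$; so $\{\alpha_i\}$ has bounded shape modulo the affine group. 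Since $h(\mathcal W(f_0))=h(f_0)$ is the projective height of the coefficient vector it is unchanged by scaling, and $x\mapsto ax+t$ sends the roots to $a(\alpha_i-t)$. Translating by the rational centroid $t=\bar\alpha=\tfrac1\nu\sum_i\alpha_i$ centres the roots, with $\tfrac{\alpha_i-\bar\alpha}{\alpha_2-\alpha_1}=\beta_i-\bar\beta$ of height $\le 2\mu_X+\log\nu$; one then selects $a\in\QQ^\times$ as a product of prime powers clearing all $p$-adic denominators of the $\alpha_i-\bar\alpha$ — these are supported on $2$, on $\nu$ and on primes of bad reduction, where they are bounded by $O(g)$ times the conductor exponent, using that at a good odd prime $X$ has a model whose branch points are $p$-adic units. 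With such an $a$, the roots of $f_0(x)=\prod_i(x-a(\alpha_i-\bar\alpha))$ have height $\le O(\nu\mu_X)+O(g\log N_X)$, and since the Weil height of a degree-$\nu$ polynomial is at most $\nu$ times the largest root height plus $O(\nu\log\nu)$ (comparison with the Mahler measure), one gets a Weierstrass model $\mathcal W$ over $\spe(\ZZ)$ with $h(\mathcal W)\le 6\nu(\nu-1)\mu_X+12g\nu\log N_X+6\nu^2$.

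\emph{The main obstacle.} In both parts the crux is turning the $\mathrm{PGL}_2$-invariant input of Step 1 into a genuine arithmetic object with \emph{explicit} constants. For (i) this is the bookkeeping in the Par\v{s}in--de Jong--R\'emond decomposition — tracking the number of auxiliary covers and the degrees of the intervening isogenies as functions of $g$ and $q\le 2g+1$, which is exactly what forces the doubly-exponential constant, and handling possible twists of the geometric cover over $L$. For (ii) it is the simultaneous choice of the rational scale $a$: it must clear every $p$-adic denominator (controlled by the conductor) while keeping the archimedean size of the roots small (controlled by the bounded shape), and keeping the degree-of-field losses inside the factor $6\nu(\nu-1)$; the whole argument ultimately rests on the geometric input that good reduction of $X$ in residue characteristic $\ne q$ forces the branch divisor of $\varphi$ to remain étale.
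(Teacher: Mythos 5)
Your Step 1 is fine and is exactly the paper's starting point (it is de Jong--R\'emond's Proposition 2.1, quoted in the paper's proof), but both of the subsequent steps have genuine gaps. In part (i), your key mechanism does not exist: the Jacobian of a cyclic cover branched at $2g+2$ (or $2g+1$) points is not, in general, isogenous --- let alone by isogenies of degree bounded in $g$ --- to a product of Jacobians of cyclic covers branched at at most four points; already for $q=2$ a generic hyperelliptic Jacobian is simple, so there is no such decomposition to which one could apply Faltings' isogeny estimate. The actual Par{\v{s}}in--Oort--de Jong--R\'emond route, which the paper follows, is to use the cross-ratio bound to write the function field of $X_{\bar K}$ as $\bar K(x)[y]/(y^q-f(x))$ with $h(f)\leq 7g\,h_\Lambda\leq 7g\mu_X$, then to bound the theta height $h_\theta(J)$ explicitly in terms of $h(f)$ (R\'emond) and to pass from $h_\theta(J)$ to $h_F(J)$ via the explicit Bost--David comparison worked out by Pazuki; your phrase ``Arakelov-theoretic contributions \ldots bounded uniformly'' is precisely the content that has to be proved, and the proposed isogeny bookkeeping cannot supply it.

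In part (ii) the missing ingredient is a global model with controlled arithmetic, and your substitute for it is unjustified. You assert that the $p$-adic denominators of $\alpha_i-\bar\alpha$ at bad primes are ``bounded by $O(g)$ times the conductor exponent'', but nothing in your construction produces a model with this property: the shape bound from Step 1 controls only ratios of root differences, the admissible renormalizations are constrained (Galois stability forces the overall scale to be a rational multiple of $\alpha_2-\alpha_1$, and rescaling $x$ by a non-square changes $X$ by a quadratic twist), and the simultaneous control of all non-archimedean scalings together with the archimedean size is exactly the hard global step. The paper closes this gap by importing the effective Shafarevich input of \cite{rvk:hyperelliptic}: a monic model $\mathcal W(f)$ over the $S(\QQ,2)$-integers whose discriminant is an $S$-unit with $H(\Delta(f))\leq (2N_X)^{4g\nu}$ --- this is the sole source of the $\log N_X$ term --- and then combines the unit-equation bound $h(\lambda)\leq\mu_X$ for $\lambda=(\gamma-\alpha)/(\alpha-\beta)$ with $H\bigl(\Delta(f)(\alpha-\beta)^{-\nu(\nu-1)}\bigr)\leq(2\mu^2)^{\nu(\nu-1)}$ to bound $H(\alpha-\beta)$, re-centers by a trace shift $\theta\in\mathcal O$ with $\lvert\mathrm{Tr}(f)-\nu\theta\rvert\leq\nu$ (not by the exact centroid, so as to stay $S$-integral), and finally clears denominators by the square $a_0^2$ of an integer, which keeps the curve and avoids the twist issue. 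Without that input, or an equivalent argument producing a model whose discriminant (or scaling factor) is controlled in terms of $N_X$, your Step 3 does not yield the stated inequality.
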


We remark that the exponent $2^{2^{22} 9^g}$ in statement (i) can be replaced with $2^{3362·g^38^g}$. This exponent comes from height comparisons of R\'emond \cite{remond:rational} which hold for arbitrary curves over $K$. It seems possible to improve these height comparisons in our special case of cyclic covers.
Further, we mention that the assumption  $K=\QQ$ in statement (ii) is used to get an estimate which is linear in terms of $\log N_X$. For this our method requires  a  fundamental unit system with sufficiently small height, and in arbitrary number fields such a system does not always exist.
On the other hand, it seems possible (but technically extensive) to remove the Weierstrass point assumption in (ii) on going into the proof of \cite[Theorem (ii)]{rvk:hyperelliptic}.

Before we prove Proposition \ref{proph} we consider the genus one case. Let $E$ be an elliptic curve over a number field $K$ and let $K(E[2])$ be the field of definition  of the $2$-torsion points of $E$. We denote by $T$  the places of $K(E[2])$ which divide 2 or a finite place of $K$ where $E$ has not potential good reduction. Let $\mathcal O_T^{\times}$ be the $T$-units in $K(E[2])$. In the proof of Proposition \ref{prophe} below, we shall see that the quantity $$\mu_E=\max(h(\lambda), \lambda \in \mathcal O_T^\times \textnormal{ and } 1-\lambda \in \mathcal O_T^{\times})$$
controls ``the ramification of $E$ over the projective line''.

\begin{proposition}\label{prophe}
Any elliptic curve $E$ over $K$ satisfies $h_F(E)\leq \frac{1}{2}\mu_{E}+4$.
\end{proposition}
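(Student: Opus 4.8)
The plan is to connect the Faltings height $h_F(E)$ to the height of a Weierstrass equation for $E$, and then to bound that Weierstrass height by the height of a $2$-unit $\lambda$ whose associated cross-ratio is the $\lambda$-invariant (Legendre form) of $E$. First I would reduce to the case where $E$ has semi-stable reduction after base change to $K(E[2])$: by the semi-stable reduction theorem $E$ acquires semi-stable (even good, away from a controlled set) reduction over the field $K(E[2])$, and since $h_F$ is a stable height it is insensitive to base change, so one may compute $h_F(E)=h_F(E_{K(E[2])})$. Over $L=K(E[2])$ the curve admits a Legendre model $y^2=x(x-1)(x-\lambda)$ with $\lambda\in L\setminus\{0,1\}$, and the three $2$-torsion points $0,1,\lambda$ together with $\infty$ are the four geometric ramification points of the degree-$2$ map $E\to\PP^1$; their cross-ratio recovers $\lambda$. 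The key point is that $\lambda$ is an $S$-unit with $1-\lambda$ also an $S$-unit, for $S$ the set $T$ of places of $L$ dividing $2$ or a place of bad reduction: indeed the discriminant of the Legendre model is $16\lambda^2(1-\lambda)^2$, so any place where $\lambda$ or $1-\lambda$ has non-trivial valuation is a place of bad reduction or divides $2$. Hence $h(\lambda)\leq \mu_E$ by definition of $\mu_E$.

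Next I would bound $h_F(E)$ in terms of $h(\lambda)$. The cleanest route is via the modular/analytic description: writing $j(E)$ for the $j$-invariant, one has $j=2^8(\lambda^2-\lambda+1)^3/(\lambda^2(1-\lambda)^2)$, so $h(j)\leq 6h(\lambda)+C_0$ for an explicit small absolute constant $C_0$. Then one invokes a standard comparison between the Faltings height and the (stable, semi-stable) height coming from the minimal discriminant, e.g. the inequality $h_F(E)\leq \tfrac{1}{12}h(\Delta_{\min})+(\text{archimedean term})$ valid over $L$; combined with $\Delta_{\min}\mid 16\lambda^2(1-\lambda)^2$ (up to the places dividing $2$, which contribute a bounded amount) this yields $h_F(E)\leq \tfrac{1}{2}h(\lambda)+\text{(const)}$. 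Tracking constants carefully — the archimedean contribution to the Faltings height of the Legendre curve, which one can estimate from the period lattice or from standard bounds on $\|\Delta\|(\text{Im}\,\tau)^6$ for $\tau$ in a fundamental domain — should give the additive constant $4$ claimed in the statement. The factor $\tfrac12$ matches the factor $6$ from $j$ in terms of $\lambda$ divided by the factor $12$ in the discriminant-to-Faltings comparison.

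The main obstacle I expect is pinning down the archimedean term and the contribution of the primes above $2$ precisely enough to land exactly on the constant $4$. The prime-$2$ issue is purely bookkeeping: the Legendre model need not be minimal at places dividing $2$, and one must bound the local defect (at most a bounded multiple of $\sum_{v\mid 2}\log N_v$, which in turn is absorbed into the set $T$ and hence already accounted for in $\mu_E$, or estimated trivially by the degree). The archimedean part requires an explicit lower bound for $\|\Delta(\tau)\|\,(\text{Im}\,\tau)^6$ on the standard fundamental domain together with the relation between $\lambda(\tau)$, $\tau$, and the Faltings height integral $-\tfrac12\log\big(\text{Im}\,\tau\cdot|\Delta|^{1/6}\big)$-type quantity; this is classical and the numerics are mild, so the target constant $4$ should be reachable with room to spare. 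Everything else — the base change invariance of $h_F$, the $S$-unit property of $\lambda$, and the height inequality for $j$ in terms of $\lambda$ — is routine once the reductions above are in place.
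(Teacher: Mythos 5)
There is a genuine gap, in two places. First, your unit claim does not match the definition of $\mu_E$. In the paper, $T$ consists of the places of $K(E[2])$ dividing $2$ or lying over a finite place of $K$ where $E$ fails \emph{potential} good reduction; this is in general a strictly smaller set than the set of places of bad reduction (additive places with integral $j$-invariant are excluded). Your discriminant argument only shows that $\lambda$ and $1-\lambda$ are units outside $2$ and the bad-reduction places, which is the unit property for a \emph{larger} set, and from that one cannot conclude $h(\lambda)\leq\mu_E$. What is needed (and what the paper does) is the valuation computation: for $v\nmid 2$, if $v(\lambda)\neq 0$ or $v(1-\lambda)\neq 0$ then $v(j_E)<0$, so $E$ has potential multiplicative reduction there and $v\in T$; this also covers the case $v(\lambda)<0$, where the Legendre model is not even integral and the discriminant argument is not available. (Your side remark that $E$ becomes semi-stable over $K(E[2])$ is also false at places above $2$, e.g. $y^2=x^3-x$ over $\QQ$, but this is subsidiary.)

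Second, the quantitative step is unsound as proposed. An inequality $h_F(E)\leq \tfrac{1}{12}h(\Delta_{\min})+C$ with an absolute constant $C$ is false: the archimedean term in the Faltings height is $-\tfrac{1}{12}\log\bigl((2\pi)^{12}\lvert\Delta(\tau)\rvert\im(\tau)^6\bigr)$ up to normalization, and $\lvert\Delta(\tau)\rvert\im(\tau)^6\to 0$ as $\im(\tau)\to\infty$ in the fundamental domain, so there is no lower bound of the kind you invoke (concretely, curves with everywhere good reduction can have trivial minimal discriminant and arbitrarily large $h_F$). The quantity that admits an absolute bound on the fundamental domain is an \emph{upper} bound for $\lvert\Delta(\tau)\rvert\im(\tau)^6$, which serves the opposite inequality (it is what the paper uses later, in Proposition 5.2(iii)). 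The repair is the route you already half set up: from $j_E=2^8(\lambda^2-\lambda+1)^3/(\lambda^2(1-\lambda)^2)$ one gets $h(j_E)\leq 6h(\lambda)+17\log 2$, and then one should use an explicit comparison of $h_F$ with $\tfrac{1}{12}h(j_E)$ (the paper cites $h_F(E)\leq \tfrac{1}{12}h(j_E)+2.37$), which immediately gives $h_F(E)\leq \tfrac{1}{2}h(\lambda)+3.36\leq\tfrac{1}{2}\mu_E+4$ once the unit property above is established. With these two corrections your argument becomes essentially the paper's proof.
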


On considering the infinite family of curves $E$ with $K(E[2])=K$ we see that the coefficient $1/2$ in Proposition \ref{prophe} is best possible. On the other hand, the constant 4 therein is not optimal and it can be improved up to a certain extent.

\begin{proof}[Proof of Proposition \ref{prophe}]
Let $E$ be an elliptic curve over $K$ and write $L=K(E(2))$. We denote by $O$ the Weil divisor on $E$ corresponding to the zero section of $E$. Let $P_1,P_2,P_3$ be the three non-zero rational $2$-torsion points of the base change $E_L$ of $E$ to $L$.  We denote by $x$ a global section  of the line bundle $\mathcal O_E(2O)$ on $E_L$, where $\mathcal O_E(2O)$ is associated to the divisor $2O$ on $E_L$. The classical Riemann-Roch theorem for curves gives $x$ such that $1,x$ generate $\mathcal O_E(2O)$ and such that the $j$-invariant $j_E$ of $E$ satisfies
\begin{equation}\label{eq:j}
j_E=2^8\frac{((1-\lambda)^2+\lambda)^3}{\lambda^2(1-\lambda)^2}, \ \ \lambda=\frac{x(P_1)-x(P_2)}{x(P_1)-x(P_3)}\in L.
\end{equation}

To bound $h_F(E)$ in terms of $h(\lambda)$ we let $v$ be a place of $L$ and we take an exponential valuation $\left| \ \right|$ in $v$. If $v$ is infinite, then we deduce
$$\frac{\left|(1-\lambda)^2+\lambda\right|^3}{\left|\lambda^2(1-\lambda)^2\right|} 
\leq 2^3\frac{\max(\left|1-\lambda\right|^6,\left|\lambda\right|^3)}{\left|\lambda^2(1-\lambda)^2\right|}\leq 2^7 \max(\left|\lambda\right|^2,\left|\lambda\right|^{-2},\frac{\left|\lambda\right|}{\left|1-\lambda\right|^2})$$
and if $v$ is finite, then we get the same estimates but without the factors $2^3$ and $2^7$.
Let $H=\exp(h)$ be the  absolute exponential Weil height. Then the inequality $H(\lambda(1-\lambda)^{-2})\leq 4H(\lambda)^2$ and (\ref{eq:j}) lead to $H(j_E)\leq 2^{17}H(\lambda)^6$, and therefore $h_F(E)\leq 1/12h(j_E)+2.37$, obtained in \cite[Lemma 7.9]{gare:periods}, gives 
\begin{equation}\label{eq:hs}
h_F(E)\leq 1/2h(\lambda)+3.36. 
\end{equation}

To estimate $h(\lambda)$ in terms of $\mu_E$ we take a finite place $v$ of $L$ with $2\nmid N_v$. If $v(\lambda)\geq 1$, then $v(1-\lambda)=0$ 
and therefore (\ref{eq:j}) 
shows that $v(j_E)$ is negative which together with \cite[p.197]{silverman:aoes} implies that $v\in T$. If $v(\lambda)\leq -1$, then $v(\lambda^{-1})\geq 1$ and thus on replacing $P_2$ by $P_3$ in (\ref{eq:j}) we get again that $v\in T$. 
Hence we see that $\lambda\in L$ is a $T$-unit. The equality $$(1-\lambda)^{2}+\lambda=\lambda^2+(1-\lambda)$$
shows that $j_E$ is symmetric in $\lambda$ and $1-\lambda$ and thus we deduce that $1-\lambda\in L$ is a $T$-unit as well. We conclude that
$h(\lambda)\leq \mu_E$ and then the statement follows from (\ref{eq:hs}). This completes the proof of Proposition \ref{prophe}.\end{proof}

The rational function $x$ on $E$, which appears in the proof, defines a finite morphism $\varphi:E\to \mathbb P^1_K$ of degree two which ramifies exactly in the $2$-torsion points of $E$. This shows that if $X=E$ and $\mu_X$ is defined as above with respect to the cyclic cover $\varphi:X\to \mathbb P^1_K$ of degree two, then $\mu_E\leq \mu_X$.

In the remaining of this section we prove Proposition \ref{proph}. To obtain (i) we apply the method of Par{\v{s}}in \cite{parshin:shafarevich}, Oort \cite{oort:shafarevich} and de Jong-R\'emond \cite{jore:shafarevich}, 
and to show (ii) we use the strategy of \cite[Proposition 5.2 (i)]{rvk:hyperelliptic}.

\begin{proof}[Proof of Proposition \ref{proph}] By assumption, the curve $X$ over $K$ has genus $g\geq 2$, and there exists a finite morphism $\varphi:X\to \mathbb P^1_K$ of prime degree $q$ which is geometrically a cyclic cover.

To show (i) we follow the arguments of de Jong-R\'emond in \cite[Section 4]{jore:shafarevich}. Let $\bar{K}$ be an algebraic closure of $K$ and let $X_{\bar{K}}$ be the base change of $X$ to $\bar{K}$. Let $\Lambda$ be  the set of cross ratios of any four distinct (geometric) branch points of $\varphi$, see \cite[Section 2]{jore:shafarevich} in which $\Lambda$ is denoted by $\mathcal B$. We define $h_\Lambda=\max(1,h(\lambda))$ with the maximum taken over all $\lambda\in \Lambda$. As in \cite[Section 4]{jore:shafarevich} we see that the function field of $X_{\bar{K}}$ takes the form $$\bar{K}(x)[y]/\bigl(y^q-f(x)\bigl),\ \ f\in \bar{K}[x], \ \ h(f)\leq 7gh_\Lambda.$$
We choose $\lambda\in \Lambda$ and we observe that $1-\lambda$ and $\lambda^{-1}$ are both in $\Lambda$. Therefore  \cite[Proposition 2.1]{jore:shafarevich}  shows $\lambda\in \OSt \textnormal{ and } 1-\lambda\in \OSt$. This gives that $h(\lambda)\leq \mu_S$. Hence we see that $h_\Lambda\leq \mu_X$, since $K(\lambda)\subseteq L$ for some field $L\in\mathcal R$. Then the above upper bound for $h(f)$ implies
\begin{equation}\label{eq:hf}
h(f)\leq 7g\mu_X. 
\end{equation}
Explicit results of R\'emond \cite{remond:rational} bound the theta height $h_\theta(J)$  of the Jacobian $J$ of $X$ in terms of $h(f)$, see \cite[Section 4]{jore:shafarevich}. Pazuki \cite{pazuki:heights} worked out explicitly  a theorem of Bost-David that compares $h_F(J)$ to $h_\theta(J)$. We combine these results as in \cite[Section 4]{jore:shafarevich} and then (\ref{eq:hf}) leads to an estimate for $h_F(J)$ as stated in (i).

To prove (ii) we observe that our assumptions imply that $X$ is a hyperelliptic curve over $\QQ$ with a $\QQ$-rational Weierstrass point.  Let $\mathcal O$ be the ring of $S(\QQ,2)$-integers, let $\mathcal O^\times$ be the units of $\mathcal O$ and define the exponential height function $H=\exp(h)$. From \cite[Proposition 4.4 (i)]{rvk:hyperelliptic} we deduce that there is a Weierstrass model $\mathcal W(f)$ of $X$ over $\spe(\mathcal O)$ such that $f\in \mathcal O[x]$ is monic of degree $\nu=2g+1$ and has discriminant $\Delta(f)\in \mathcal O^\times$ that satisfies
\begin{equation}\label{eq:deltaw}
H(\Delta(f))\leq (2N_X)^{4g\nu}. 
\end{equation}
Since $\nu\geq 3$ and $\Delta(f)\neq 0$ we can choose distinct roots $\alpha,\beta,\gamma$ of $f$. We get that $\QQ(\alpha,\beta,\gamma)\subseteq L$ for some $L\in\mathcal R$.  The roots $\alpha,\beta,\gamma$ of our monic $f\in \mathcal O[x]$ are $S$-integral and $\Delta(f)$ is a $S$-unit, where $S=S(L,2)$. 
Thus we obtain
\begin{equation*}
\lambda=\frac{\gamma-\alpha}{\alpha-\beta}\in \mathcal O_S^\times \textnormal{ and } 1-\lambda\in \mathcal O_S^\times.
\end{equation*}
This implies  $h(\lambda)\leq \mu_S\leq\mu_X$ and  $H(\Delta(f)(\alpha-\beta)^{-\nu(\nu-1)})\leq(2\mu^2)^{\nu(\nu-1)}$, where $\mu=\exp(\mu_X)$. 
Therefore we deduce
\begin{equation}\label{eq:ab}
H(\alpha-\beta)\leq (2\mu^2)H(\Delta(f))^{1/(\nu(\nu-1))}.
\end{equation}
Since $f$ has coefficients in $\mathcal O$ we get that the trace $\textnormal{Tr}(f)$ of $f$ is in $\mathcal O$ and then \cite[Lemma 6]{evgy:binaryforms} gives $\eta\in \ZZ$, $\theta\in \mathcal O$ such that $\eta=\textnormal{Tr}(f)-\nu\theta$ and that $\left|\eta\right|\leq \nu$.
Thus for any root $\alpha$ of $f$ we get $\nu(\alpha-\theta)=\sum(\alpha-\beta)+\eta,$ with the sum taken over the roots $\beta$ of $f$. This together with (\ref{eq:ab}) leads to
$
H(\alpha-\theta)\leq \nu^3(2\mu^2)^{\nu-1}H(\Delta(f))^{1/\nu}.
$
We write $\theta^*f(x)=\prod(x-(\alpha-\theta))$ with the product taken over all roots $\alpha$ of $f$. The above estimate for $H(\alpha-\theta)$ combined with (\ref{eq:deltaw})  shows
\begin{equation}\label{eq:tfbound}
H(\theta^*f)\leq (\nu^32^{\nu})^\nu(2N_X)^{4g\nu}\mu^{2\nu(\nu-1)}. 
\end{equation}
Let $a_0$ be the smallest positive integer such that $a_0\theta^*f\in \ZZ[x]$. It satisfies $a_0\leq H(\theta^*f)$ 
and then we see that $\mathcal W=\mathcal W(a_0^2\theta^*f)$ has the desired properties.
This completes the proof of Proposition \ref{proph}.
\end{proof}

In the next section we shall use  Proposition \ref{proph} to bound the discriminants of some curves in terms of the quantity $\mu_X$.

\section{Discriminants}\label{secdiscriminants}

In this section we use Arakelov geometry for curves to relate discriminants to other invariants. In the first part we state and discuss Proposition \ref{propd} and \ref{propde}, which deal with curves of genus at least two and genus one curves respectively. In the second part we give the proofs.

Let $K$ be a number field of degree $d$ over $\QQ$ and let $g\geq 1$ be an integer. In the sequel $c_\delta$ denotes a constant depending only on $g$. Let $h_F$ be the Faltings height and let $\nu=2g+1$. We denote by $\delta(X_\CC)$ Faltings' delta invariant of a compact connected Riemann surface $X_\CC$ of genus at least one, defined in \cite[p.402]{faltings:arithmeticsurfaces}. If $X$ is a hyperelliptic curve over $K$ of genus $g\geq 2$, then $\Delta_X$ denotes its minimal discriminant and  $\mu_X$ the quantity from Section \ref{secheights}. We recall that any genus two curve over $K$ is a hyperelliptic curve over $K$.

\begin{proposition}\label{propd}
Suppose $X$ is a curve over $K$ of genus $g\geq 2$ with discriminant $D_X$, conductor $N_X$ and Jacobian $J$. Then the following statements hold.
\begin{itemize}
\item[(i)] If $X$ is a semi-stable hyperelliptic curve, then $\log D_X\leq 2^{2^{23}9^g}d\mu_X.$
\item[(ii)] Suppose $K=\QQ$ and $X$ is a hyperelliptic curve with a $\QQ$-rational Weierstrass point. Then it holds $\log \Delta_X\leq 8\nu^3(\mu_X+\log N_X+2\log(\nu)).$
\item[(iii)] If $X$ has genus two, then $D_X$ divides $\Delta_X.$
\item[(iv)] Suppose $X$ is semi-stable. Then $\log D_X\leq 12d(h_F(J)+c_\delta).$
\item[(v)] If $X_\CC$ is a compact connected Riemann surface of genus two, then $-186\leq \delta(X_\CC)$. 
\end{itemize}
\end{proposition}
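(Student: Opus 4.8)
The plan is to obtain the lower bound $-186 \le \delta(X_\CC)$ for a genus two Riemann surface by combining the explicit formula of Bost \cite{bost:genus2} (see also de Jong \cite{dejong:riemanninvariants}) expressing $\delta(X_\CC)$ in terms of a modular-type invariant of the principally polarized abelian surface $\mathrm{Jac}(X_\CC)$ and a lower bound for that invariant coming from the classical theory of Siegel modular forms. Concretely, for genus two there is an identity of the shape
$$
\delta(X_\CC) = -c_1 \log \|\Delta_2\|(X_\CC) + c_2 \log \|H\|(X_\CC) + c_0,
$$
where $\|\Delta_2\|$ and $\|H\|$ are the Petersson norms of the Siegel modular forms of weight $10$ and weight something comparable attached to a period matrix $\tau$ in the Siegel upper half space $\mathfrak H_2$, and $c_0, c_1, c_2$ are explicit (small) rational constants. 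First I would write down this formula precisely from \cite{bost:genus2}, \cite{dejong:riemanninvariants}, being careful about normalizations of the Petersson metric and of Faltings' $\delta$.

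Next I would bound each term. The invariant $\|\Delta_2\|(X_\CC)$ is, up to a normalization, the Petersson norm of the cusp form $\chi_{10}$ evaluated at a period point; since $\chi_{10}$ vanishes exactly on the locus of decomposable abelian surfaces (products of elliptic curves), and $X_\CC$ is an honest curve, $\|\Delta_2\|(X_\CC) \neq 0$. The key analytic input is an \emph{upper} bound for $-\log\|\Delta_2\|(X_\CC)$ (equivalently, $\|\Delta_2\|$ is bounded below), which one gets by choosing a reduced period matrix $\tau$ in a fundamental domain for $\mathrm{Sp}_4(\ZZ)$ on $\mathfrak H_2$: on such a fundamental domain the imaginary part $\mathrm{Im}\,\tau$ is bounded below, the Fourier expansion of $\chi_{10}$ converges and its leading term controls the size, so $|\chi_{10}(\tau)|$ and hence $\|\Delta_2\|(X_\CC)$ is bounded below by an explicit constant. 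Similarly the other modular term is controlled on the fundamental domain. Assembling the explicit constants then yields $\delta(X_\CC) \ge -186$.

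The main obstacle I anticipate is keeping all the normalization constants consistent: Faltings' $\delta$, the Petersson norms appearing in Bost's and de Jong's formulas, and the precise Fourier-coefficient bounds on the Siegel fundamental domain each come with their own conventions, and a factor of $2\pi$ or of $2$ propagated incorrectly would ruin the numerical bound $186$. A secondary technical point is making the lower bound on $\mathrm{Im}\,\tau$ over the Siegel fundamental domain genuinely explicit (Gottschling's inequalities for the genus two fundamental domain are the natural tool), and then controlling the tail of the theta/Fourier series so the estimate is rigorous rather than asymptotic. Once the bookkeeping of constants is pinned down, the remaining estimates are elementary bounds on rapidly converging series.
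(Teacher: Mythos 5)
Your overall frame (Bost's genus-two formula for $\delta$ plus reduction to the Siegel fundamental domain) is the same starting point as the paper's, but your key analytic step goes in the wrong direction and is moreover false as stated. In Bost's formula, as used in the proof of Proposition \ref{propd} (v), one has $\delta(X_\CC)=-16\log(2\pi)-\log\lVert\Delta_2\rVert(X_\CC)-4\log\lVert H\rVert(X_\CC)$: both norms enter with a \emph{negative} sign, so a lower bound for $\delta(X_\CC)$ requires \emph{upper} bounds for $\lVert\Delta_2\rVert(X_\CC)$ and for $\log\lVert H\rVert(X_\CC)$. You propose instead to bound $\lVert\Delta_2\rVert$ from below ("an upper bound for $-\log\lVert\Delta_2\rVert$"); with the correct signs this would only help for an upper bound on $\delta$, not the desired lower bound. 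Worse, no uniform positive lower bound exists: $\lVert\Delta_2\rVert$ is (up to normalization) the invariant norm of the weight-ten cusp form $\chi_{10}$, which vanishes on the locus of decomposable principally polarized abelian surfaces; the Siegel fundamental domain contains period matrices of such surfaces (e.g.\ diagonal $\tau$), and period matrices of smooth genus-two curves come arbitrarily close to them, so $|\chi_{10}(\tau)|$ is not bounded below on the fundamental domain and $\lVert\Delta_2\rVert(X_\CC)$ can be arbitrarily small (correspondingly $\delta(X_\CC)\to+\infty$ in such degenerations). Your assertion that the leading Fourier term bounds $|\chi_{10}(\tau)|$ from below on the fundamental domain fails for exactly this reason.

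The input actually needed is the opposite inequality, and it is not automatic: one must show that the $\textnormal{Sp}_4(\ZZ)$-invariant quantity $|\Delta_2(\tau)|\det(\im(\tau))^{2r}$, with $r=\binom{5}{3}=10$ matching the weight $40$ of $\Delta_2$, is bounded above uniformly on $\mathfrak H_2$. On the fundamental domain $\det(\im(\tau))$ is unbounded, so the trivial bound on theta constants does not suffice; the paper's Lemma \ref{lemfg} handles this by producing, for every symplectic transformation, one even theta characteristic whose transform has a half-integral last entry (Lockhart's lemma), which yields an exponential decay factor $\exp(-\pi k_2\im(\tau_{gg})/4)$ beating the polynomial growth $\det(\im(\tau))^{2r}\leq\im(\tau_{gg})^{2gr}$. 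Your proposal contains no mechanism of this kind. Finally, $\lVert H\rVert$ is not a modular form in $\tau$ but the integral $\int_{\textnormal{Pic}_1(X_\CC)}\log\lVert\theta\rVert\,\mu^2/2$; the required upper bound follows from the identity $\int\lVert\theta\rVert^{2}\mu^2/2=1/2$ and concavity of the logarithm, a step your sketch does not identify, and with your sign convention $+c_2\log\lVert H\rVert$ you would in fact need a lower bound, i.e.\ control of $\log\lVert\theta\rVert$ along the theta divisor, which is a different and harder problem and is unnecessary once the signs are fixed.
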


The semi-stable assumptions in (i) and (iv) are used to apply results from Arakelov geometry. The constant $c_\delta$ in (iv) comes from lower bounds for $\delta(X_\CC)$ which are not known to be effective. However, the proof of (i) gives in addition an effective constant $c_{g}$, depending only on $g$, such that if $X$ is a semi-stable hyperelliptic curve over $K$ of genus $g\geq 2$, then $$\log D_X\leq 13d(h_F(J)+c_g).$$ Moreover, if $X$ is a semi-stable genus two curve over $K$, then (v) combined with the proof of (iv) implies that $\log D_X\leq 12d(h_F(J)+16)$. 

Let $X$ be a genus one curve over $K$. We may and do take  a finite extension $L$ of $K$ such that the base change $X_L$, of $X$ to $L$, is semi-stable. Then we define the unstable discriminant $\gamma_X$ of $X$ by $$\gamma_{X}=D_XD_{X_L}^{-1/l},$$ where $D_{X_L}$ is the discriminant of $X_L$ and $l$ is the relative degree of $L$ over $K$. We observe that the definition of $\gamma_X$ is independent of the choice of $L$.
The Jacobian $E$ of $X$ is an elliptic curve over $K$, and we denote by  $N_E$, $D_E$ and $\Delta_E$  the conductor, the discriminant and the minimal discriminant of  $E$ respectively. Let $T_0$ be the set of finite places $v$ of $K$ with $2\nmid N_v$ where $E$ has potential good reduction, and let $T_1$ be the set of finite places $v$ of $K$ with $2\nmid N_v$  where $E$ has reduction type $I_n^*$ for some integer $n\geq 1$. Here $I_n^*$ denotes the Kodaira symbol.  Let $\mu_E$ be the invariant of $E$ defined in the previous section, and for any finite place $v$ of $K$ let $n_v$ be the exponent of $\Delta_E$ at $v$ defined in Section \ref{secconjectures}.

We mention that statement (i) and (ii) in the next proposition follow directly from results of Liu-Lorenzini-Raynaud in \cite{lilora:genusone}.

\begin{proposition}\label{propde}
If $X$ is a genus one curve over $K$ with Jacobian $E$, then
\begin{itemize}
\item[(i)] $D_X=D_E,$ $N_X=N_E,$
\item[(ii)] $D_E=\Delta_E,$
\item[(iii)] $\log D_X\leq 6d\mu_E+\log\gamma_X+57d$ and
\item[(iv)]  $\log \gamma_X=\sum_{v\in T_0}n_v\log N_v+6\sum_{v'\in T_1}\log N_{v'}+r$ for $r$ a real number with $0\leq r\leq 24d\log 2$.
\end{itemize}
\end{proposition}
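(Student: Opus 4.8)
\textbf{Proof plan for Proposition \ref{propde}.}

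The plan is to reduce everything to properties of the minimal regular model and to the Kodaira--N\'eron classification, handling the four statements in turn. For (i), I would invoke Liu--Lorenzini--Raynaud \cite{lilora:genusone}: a genus one curve $X$ over $K_v$ and its Jacobian $E$ have minimal regular models whose special fibres differ only by the multiplicity of the components, so the associated $\det R\rho_\ast$-data (and hence Mumford's isomorphism $D$) transform identically; this yields $\delta_v(X)=\delta_v(E)$ for every finite place $v$, and thus $D_X=D_E$. The equality $N_X=N_E$ is immediate from the definition of $N_X$ in Section \ref{secconjectures}, since for a curve of genus one the Jacobian $E$ has bad reduction exactly where $X$ does, so the correction term $a_v$ vanishes everywhere. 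For (ii), the point is that for an elliptic curve the discriminant exponent $\delta_v$ defined via $D$ agrees with the exponent $n_v$ of the minimal Weierstrass discriminant $\Delta_E$; this is a local statement at each $v$ which again follows from \cite[Lemma 5.3]{lilora:genusone} together with the compatibility of Deligne's isomorphism with the standard Weierstrass presentation (equivalently, one reads it off the Kodaira--N\'eron tables, where $\delta_v=n_v$ in every case). Hence $D_E=\prod N_v^{\delta_v}=\prod N_v^{n_v}=\Delta_E$.

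For (iv), I would work place by place. At a finite place $v$ with $2\nmid N_v$, the unstable contribution to $\gamma_X=D_X D_{X_L}^{-1/l}$ is governed by how $n_v$ drops under the base change to $L$ where $E$ acquires semi-stable reduction. Using the Kodaira--N\'eron classification \cite{neron:models} and the behaviour of reduction type under tame base change: if $E$ has potential good reduction at $v$ (type $I_0$ after base change) the semi-stable discriminant exponent is $0$, so the full $n_v$ contributes, giving the term $\sum_{v\in T_0} n_v\log N_v$; if $E$ has type $I_n^\ast$ the potential reduction is multiplicative of type $I_{2n}$, and comparing $n_v=6+n$ with the stable exponent $2n/e$ (with $e=2$ the ramification needed) yields the contribution $6$ per such place, giving $6\sum_{v'\in T_1}\log N_{v'}$; the remaining additively-reducible types $II,III,IV,IV^\ast,III^\ast,II^\ast$ at odd places have potential good reduction and are already absorbed into $T_0$. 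The leftover real number $r$ collects the wild places $v\mid 2$, where the analogous comparison of $n_v$ with the semi-stable exponent is bounded but not pinned down exactly; a crude bound $n_v\le 12+2d$ at such $v$ together with $\sum_{v\mid 2}\log N_v\le d\log 2$ gives $0\le r\le 24d\log 2$. Finally (iii) combines the previous parts with Proposition \ref{prophe}: writing $\log D_X=\log\Delta_E$ via (i) and (ii), and splitting $\Delta_E$ into its "potentially-semi-stable" part (which is bounded by $\log\gamma_X$ plus the semi-stable discriminant, in turn controlled by $h_F(E)$ via the standard Arakelov inequality $\log\Delta_E^{\mathrm{st}}\le 12 d\,h_F(E)+O(d)$) and then inserting $h_F(E)\le\frac12\mu_E+4$ from Proposition \ref{prophe}; collecting constants gives the claimed $6d\mu_E+\log\gamma_X+57d$.

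The main obstacle will be (iv) at the wild places $v\mid 2$: there the clean tame-base-change bookkeeping via the Kodaira symbols breaks down, reduction type can change in ways not captured by the classical tables, and one must fall back on the general fact (Pesenti--Szpiro \cite{pesz:discriminant}, Liu) that the discriminant exponent of an elliptic curve over a $2$-adic field is bounded in terms of the degree, then carefully track how much of it survives base change. Getting the explicit constant $24d\log 2$ rather than merely "$O(d)$" requires being precise about the maximal possible $n_v$ and about $e_v\le d$ for places above $2$; I expect this to be where most of the bookkeeping lives, with everything else following routinely from \cite{lilora:genusone}, \cite{neron:models} and \cite{pesz:discriminant}.
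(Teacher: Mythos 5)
Your overall route is the paper's route: (i) and (ii) via Liu--Lorenzini--Raynaud (the paper cites \cite[Theorem 5.9]{lilora:genusone} for $\delta_v(X)=\delta_v(E)$ and \cite[p.509]{lilora:genusone} for $n_v(E)=\delta_v(E)$); (iii) via Faltings' Noether-type formula plus Proposition \ref{prophe}; (iv) via the Kodaira--N\'eron tables plus a separate calculation at $v\mid 2$. So the strategy is sound. Two places where your bookkeeping does not actually deliver the stated constants, though. In (iii), the paper does not split off a semi-stable discriminant and bound it by an ``Arakelov inequality'': it invokes the exact equality from \cite[Theorem 7 b)]{faltings:arithmeticsurfaces}, namely $\log D_X = 12d\,h_F(E) + \log\gamma_X + \sum_\sigma\log|(2\pi)^{12}\Delta(\tau_\sigma)\im(\tau_\sigma)^6|$, bounds the archimedean sum by $16d$, and then feeds in the unrounded intermediate estimate $h_F(E)\le \tfrac12\mu_E + 3.36$ from the proof of Proposition \ref{prophe}, giving $12d\cdot 3.36 + 16d \approx 56.3d \le 57d$. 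If you instead plug in the rounded statement $h_F(E)\le\tfrac12\mu_E+4$, you get $48d+16d=64d$, which overshoots $57d$; you must keep the sharper constant at this step. In (iv), your wild bound ``$n_v\le 12+2d$ and $\sum_{v\mid 2}\log N_v\le d\log 2$'' yields $(12+2d)d\log 2$, which exceeds $24d\log 2$ as soon as $d\ge 7$. What one needs is a bound of the form $n_v\lesssim 24\,e(v\mid 2)$ (linear in the local ramification index, not in the global degree $d$), so that $\sum_{v\mid 2}n_v\log N_v\le 24\sum_{v\mid 2}e_vf_v\log 2 = 24d\log 2$; your version conflates local and global ramification. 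The tame computations for $T_0$ and $T_1$, including the $I_n^\ast\mapsto I_{2n}$ comparison giving the contribution $6$, are fine.
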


The Noether formula (\ref{eq:faltingsheight}) gives that the dependence on $\gamma_X$ and $\mu_E$ is optimal in (iii). On the other hand, the constants 57 and 24, in (iii) and (iv) respectively, can be slightly improved.

In the remaining part of this section we prove the above results. The tools used in the proof of Proposition \ref{propde} are as follows. Liu-Lorenzini-Raynaud \cite{lilora:genusone} give (i) and (ii). We also mention that (ii) is implicitly contained in Saito's proof \cite{saito:conductor} of Ogg's formula. On using an explicit result of Faltings \cite{faltings:arithmeticsurfaces} we shall derive (iii) from Proposition \ref{prophe}. Finally, (iv) uses the classification of Kodaira-N\'eron \cite{neron:models}.

\begin{proof}[Proof of Proposition \ref{propde}]
Let $X$ be a genus one curve over $K$, with Jacobian $E$. If $v$ is a finite place of $K$, then $n_v,a_v,\delta_v$ are as in Section \ref{secconjectures}.

We start with the proof of (i). The global result $D_X=D_E$ is a direct consequence of the local equality $\delta_v(X)=\delta_v(E)$ in  \cite[Theorem 5.9]{lilora:genusone}. This local equality  implies in addition that the curve $X$ has good reduction at $v$ if and only if its Jacobian $E$ has good reduction at $v$. This gives $a_v(X)=0$ and we deduce that $N_X=N_E$.

Part (ii) follows from $n_v(E)=\delta_v(E)$ in \cite[p.509]{lilora:genusone}. We now prove (iii). For any embedding $\sigma:K\hookrightarrow \mathbb C$ we may and do take $\tau_\sigma\in \mathbb C$ such that the base change of $E$ to $\mathbb C$ with respect to $\sigma$ takes the form $\mathbb C/\ZZ+\ZZ\tau_\sigma$ and such that $\im(\tau_\sigma)\geq \sqrt{3}/2$. The height $h_F(E)$ is invariant under an algebraic base change of $E$. Hence \cite[Theorem 7 b)]{faltings:arithmeticsurfaces}, which holds for all genus one curves over $K$, leads to
\begin{equation}\label{eq:faltingsheight}
\log D_X=12dh_F(E)+\log \gamma_X+\sum\log\left|(2\pi)^{12}\Delta(\tau_\sigma)\im(\tau_\sigma)^6\right|,
\end{equation}
with the sum taken over all embeddings $\sigma:K\hookrightarrow \mathbb C$, where
$\Delta(\tau_\sigma)=q\prod_{n=1}^{\infty}(1-q^n)^{24}, \ \  q=\exp(2\pi i \tau_\sigma).$ On using that $\log \left|\Delta(\tau_\sigma)/q\right|\leq 24\left|q\right|/(1-\left|q\right|)$ and $\left|q\right|\leq \exp(-\pi\sqrt{3})$, we compute
$\log \left|(2\pi)^{12}\Delta(\tau_\sigma)\im(\tau_\sigma)^6\right|\leq 16$.
Thus (\ref{eq:faltingsheight}) and (\ref{eq:hs}) imply assertion (iii). 

To show (iv) we use (i) and (ii). They give that the unstable discriminant $\gamma_E$ of $E$ coincides with $\gamma_X$. Then the classification \cite[p.448]{silverman:aoes} and a calculation for finite places $v$ of $K$ with $2\vert N_v$ imply (iv). Thus all statements of Proposition \ref{propde} are proven.
\end{proof}

In the proof of Proposition \ref{propd} we shall use two lemmas which describe properties of a Siegel modular form associated to a hyperelliptic Riemann surface. To state and prove these lemmas we now introduce some notation.  Let $\mathfrak H_g$ be the Siegel upper half plane of complex symmetric $g\times g$ matrices with positive definite imaginary part.  For any $\tau\in \mathfrak H_g$ and $z\in \mathbb C^g$ we write
$$
\vartheta_{(a,b)}(\tau,z)=\sum_{m\in \ZZ^g}\exp(i\pi (m+a)^*\tau (m+a)+2\pi i (m+a)^*(z+b)),
$$
where $a,b\in \mathbb R^{g}$ and $u^*$ denotes the dual of $u\in \mathbb R^g$. If $\mathcal S$ is a subset of $\mathcal I=\{1,2,\dotsc,2g+1\}$, then we define the theta characteristic $\eta_{\mathcal S}\in \frac{1}{2}\mathbb Z^{2g}$ as in \cite[Chapter IIIa]{mumford:theta2}. Let $n=\binom{2g}{g+1}$ and write
$$\Delta_g(\tau)=2^{-4n(g+1)}\prod\vartheta_{\eta_{\mathcal S\circ \, \mathcal U}}(\tau,0)^8$$
with the product taken over all subsets $\mathcal S$ of $\mathcal I$ of cardinality $g+1$, where $\circ$ is the symmetric difference operator and $\mathcal U=\{1,3,\dotsc,2g+1\}$. If $g=1$, then $\Delta_1(\tau)$ is the classical discriminant modular form. The group $\textnormal{Sp}_{2g}(\ZZ)$ of $2g\times 2g$ symplectic matrices with coefficients in $\ZZ$ acts on $\mathfrak H_g$ and $\frac{1}{2}\ZZ^{2g}$ as follows. If $\sigma=(\alpha,\beta;\gamma,\delta)\in \textnormal{Sp}_{2g}(\ZZ)$, $\tau\in \mathfrak H_g$ and $u\in \frac{1}{2}\ZZ^{2g}$, then
$$\sigma \tau=(\alpha\tau + \beta)(\gamma\tau+\delta)^{-1}, \ \ \sigma u= \begin{pmatrix} \delta & -\gamma\\ -\beta & \alpha \end{pmatrix} u+\frac{1}{2}\begin{pmatrix} \textnormal{diag}(\gamma\delta^t) \\ \textnormal{diag}(\alpha\beta^t) \end{pmatrix}, $$
where $\textnormal{diag}(M)$ denotes the vector of diagonal elements of a matrix $M$ and $M^t$ denotes the transpose of $M$. Let $r=\binom{2g+1}{g+1}$ and write $$\sigma \Delta_g(\tau)=2^{-4n(g+1)}\prod\vartheta_{\sigma^{-1}\eta_{\mathcal S\circ  \, \mathcal U}}(\tau,0)^8, \ \ \sigma\in \textnormal{Sp}_{2g}(\ZZ)$$ with the product taken over all subsets $\mathcal S$ of $\mathcal I$ with cardinality $g+1$. Let $\det(M)$ be the determinant of $M$. We shall use the following completely elementary lemma.
\begin{lemma}\label{lemdiscestimate}
If $\sigma\in\textnormal{Sp}_{2g}(\ZZ)$ and $\tau\in \mathfrak H_g$, then $$\left|\Delta_g(\sigma \tau)\right|\det(\im(\sigma \tau))^{2r}=\left|\sigma \Delta_g(\tau)\right|\det(\im (\tau))^{2r}.$$
\end{lemma}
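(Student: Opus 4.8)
The plan is to establish the identity by tracking how the two sides of the claimed equality transform under the action of $\textnormal{Sp}_{2g}(\ZZ)$. The right-hand side $\sigma\Delta_g(\tau)$ is defined via $\vartheta_{\sigma^{-1}\eta_{\mathcal S\circ\,\mathcal U}}(\tau,0)$, so the natural strategy is to apply the classical theta transformation formula to each factor $\vartheta_{\eta_{\mathcal S\circ\,\mathcal U}}(\sigma\tau,0)$ appearing on the left, and to show that, after collecting all the automorphy factors arising from the $r$ (respectively $n$-many, with multiplicities) theta constants, the product of Jacobian-type factors $\det(\gamma\tau+\delta)$ is exactly compensated by the change in $\det(\im\tau)$ under $\tau\mapsto\sigma\tau$, which satisfies $\det(\im(\sigma\tau))=\det(\im\tau)/|\det(\gamma\tau+\delta)|^2$. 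The weight bookkeeping is the crux: each $\vartheta_{\eta}(\cdot,0)^8$ contributes a factor $\det(\gamma\tau+\delta)^4$ (up to a root of unity), and there are $r=\binom{2g+1}{g+1}$ factors — no wait, one must be careful, since $\Delta_g$ is a product over subsets $\mathcal S$ of cardinality $g+1$, of which there are $\binom{2g+1}{g+1}=r$ of them, but each theta characteristic $\eta_{\mathcal S\circ\mathcal U}$ may be even or appear with the correct parity; in any case the total weight is $4r$ in the $\vartheta^8$-normalization, matching the exponent $2r$ on $\det(\im(\sigma\tau))$.

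More concretely, first I would recall (citing \cite{mumford:theta2}) the functional equation: for $\sigma=(\alpha,\beta;\gamma,\delta)\in\textnormal{Sp}_{2g}(\ZZ)$ and a characteristic $\eta\in\tfrac12\ZZ^{2g}$, one has
\begin{equation*}
\vartheta_{\sigma\eta}(\sigma\tau,0)=\zeta(\sigma,\eta)\,\det(\gamma\tau+\delta)^{1/2}\,\vartheta_{\eta}(\tau,0),
\end{equation*}
where $\zeta(\sigma,\eta)$ is an eighth root of unity depending on $\sigma$ and $\eta$. Replacing $\eta$ by $\sigma^{-1}\eta_{\mathcal S\circ\mathcal U}$ and raising to the eighth power, the root-of-unity factor disappears: $\vartheta_{\eta_{\mathcal S\circ\mathcal U}}(\sigma\tau,0)^8=\det(\gamma\tau+\delta)^4\,\vartheta_{\sigma^{-1}\eta_{\mathcal S\circ\mathcal U}}(\tau,0)^8$. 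Taking the product over all $\mathcal S$ of cardinality $g+1$ and inserting the normalization constant $2^{-4n(g+1)}$ then yields $\Delta_g(\sigma\tau)=\det(\gamma\tau+\delta)^{4r}\,\sigma\Delta_g(\tau)$ (with $r$ the number of such subsets). Taking absolute values and multiplying both sides by $\det(\im(\sigma\tau))^{2r}=\det(\im\tau)^{2r}/|\det(\gamma\tau+\delta)|^{4r}$ gives exactly the asserted equality, since $|\det(\gamma\tau+\delta)^{4r}|\cdot|\det(\gamma\tau+\delta)|^{-4r}=1$.

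The main obstacle I anticipate is purely bookkeeping: making sure the exponent $2r$ in the statement is the one that emerges, i.e. confirming that the $\vartheta^8$ normalization produces weight $4r$ rather than $4n$ or something involving $n=\binom{2g}{g+1}$, and that the subset-counting is handled consistently with the definition of $\sigma\Delta_g$ given just above the lemma. One must also verify that $\sigma^{-1}$ maps the set of characteristics $\{\eta_{\mathcal S\circ\mathcal U}:\#\mathcal S=g+1\}$ in a way compatible with how $\sigma\Delta_g$ is defined — but this is automatic since $\sigma\Delta_g$ is \emph{defined} by literally substituting $\sigma^{-1}\eta_{\mathcal S\circ\mathcal U}$, so no combinatorial identity about the Sp-action on characteristics is actually needed here. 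The analytic content (convergence, the precise form of the theta transformation law) is entirely classical and may be cited; the lemma is then immediate, which matches the author's description of it as "completely elementary."
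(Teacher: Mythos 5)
Your proposal is correct and follows essentially the same route as the paper: both apply the classical transformation law $\vartheta_{\sigma u}(\sigma\tau,0)^8=\det(\gamma\tau+\delta)^{4}\vartheta_{u}(\tau,0)^8$ (the eighth power killing the root of unity) together with $\det(\im(\sigma\tau))=\det(\im(\tau))\left|\det(\gamma\tau+\delta)\right|^{-2}$, substitute $u=\sigma^{-1}\eta_{\mathcal S\circ\,\mathcal U}$, and take the product over the $r$ subsets so that the automorphy factors cancel against the change in $\det(\im(\tau))^{2r}$. Your bookkeeping (weight $4r$, with the normalization $2^{-4n(g+1)}$ cancelling on both sides) matches the paper's argument exactly.
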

\begin{proof}
Let $\sigma=(\alpha,\beta;\gamma,\delta)\in\textnormal{Sp}_{2g}(\ZZ)$, $\tau\in \mathfrak H_g$ and $u\in \frac{1}{2}\ZZ^{2g}$. It holds $\det(\im(\sigma \tau))=\det(\im(\tau))\left|\det(\gamma\tau+\delta)\right|^{-2}$ and transformation properties of theta functions in \cite[Chapter V]{igusa:theta} give $\vartheta_{\sigma u}(\sigma\tau,0)^8=\det(\gamma\tau+\delta)^{4}\vartheta_{u}(\tau,0)^8$. Therefore we deduce
$$\left|\vartheta_{u}(\sigma\tau,0)\right|\det(\im(\sigma \tau))^{1/4}=\left|\vartheta_{\sigma^{-1} u}(\tau,0)\right|\det(\im(\tau))^{1/4}$$
and then on writing out the definitions of $\Delta_g(\sigma\tau)$ and $\sigma \Delta_g(\tau)$ we derive Lemma \ref{lemdiscestimate}.
\end{proof}

We denote by $\mathfrak F_g$ the standard fundamental domain of Siegel for the action of $\textnormal{Sp}_{2g}(\ZZ)$ on $\mathfrak H_g$. In the sequel $k_1$ denotes an effective constant depending only on $g$. It is given explicitly in (\ref{eq:estfin}). We obtain the following lemma. 
\begin{lemma}\label{lemfg}
Any $\tau \in \mathfrak H_g$ satisfies $\left|\Delta_g(\tau)\right|\det(\im(\tau))^{2r}\leq k_1.$
\end{lemma}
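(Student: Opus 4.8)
The plan is to bound $\left|\Delta_g(\tau)\right|\det(\im(\tau))^{2r}$ uniformly on all of $\mathfrak H_g$ by exploiting the $\textnormal{Sp}_{2g}(\ZZ)$-invariance established in Lemma \ref{lemdiscestimate}, which reduces the estimate to the standard fundamental domain $\mathfrak F_g$. Concretely, for arbitrary $\tau \in \mathfrak H_g$ choose $\sigma \in \textnormal{Sp}_{2g}(\ZZ)$ with $\sigma^{-1}\tau \in \mathfrak F_g$; then by Lemma \ref{lemdiscestimate} (applied with $\sigma^{-1}\tau$ in place of $\tau$) the quantity $\left|\Delta_g(\tau)\right|\det(\im(\tau))^{2r}$ equals $\left|\sigma^{-1}\Delta_g(\sigma^{-1}\tau)\right|\det(\im(\sigma^{-1}\tau))^{2r}$, where $\sigma^{-1}\Delta_g$ is one of the finitely many (at most $(2g+1)!$-many, indexed by the permutation action on $\mathcal I$ that $\textnormal{Sp}_{2g}(\ZZ)$ induces on the theta characteristics $\eta_{\mathcal S}$) products of translated theta-constants. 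So it suffices to bound each of these finitely many functions $\left|\sigma^{-1}\Delta_g(\tau')\right|\det(\im(\tau'))^{2r}$ on $\tau' \in \mathfrak F_g$, and then take $k_1$ to be the maximum over this finite set.

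The next step is the estimate on $\mathfrak F_g$ itself. On $\mathfrak F_g$ one has the standard reduction-theory facts: the imaginary part $Y = \im(\tau')$ satisfies $Y \geq c_g' \cdot \mathbf{1}$ in the sense of positive-definite forms (the diagonal entries are bounded below, e.g.\ $Y_{11}\geq \sqrt{3}/2$, and $Y$ is Minkowski-reduced), and each real part entry lies in $[-1/2,1/2]$. Writing $q_{(a,b)} = \vartheta_{(a,b)}(\tau',0)$, the series defining $\vartheta_{(a,b)}$ converges absolutely with the bound $\left|\vartheta_{(a,b)}(\tau',0)\right| \leq \sum_{m\in\ZZ^g}\exp(-\pi (m+a)^t Y (m+a))$, which is a theta series in $Y$ alone and is bounded by a constant depending only on $g$ once $Y$ ranges over the reduced cone; this gives $\left|\sigma^{-1}\Delta_g(\tau')\right| \leq k(g)$ for an explicit $k(g)$. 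The remaining factor $\det(Y)^{2r}$ grows, so the real content is that the vanishing of the product of theta-constants compensates: for $\tau'$ deep in the cusp ($\det Y \to \infty$), each theta constant $\vartheta_{(a,b)}(\tau',0)$ with $a \neq 0$ decays like $\exp(-\pi\, a^t Y a)$, and one checks from the definition of $\Delta_g$ as the product over all $g+1$-subsets $\mathcal S$ that enough of the characteristics $\eta_{\mathcal S\circ\mathcal U}$ (equivalently $\sigma^{-1}\eta_{\mathcal S\circ\mathcal U}$) have nonzero $a$-part, contributing combined exponential decay in $\det Y$ that dominates the polynomial factor $\det(Y)^{2r}$. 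I would organize this by splitting $\mathfrak F_g$ into a compact part (where continuity and non-vanishing of $\Delta_g$ — a classical fact, since its zero divisor is supported on the non-hyperelliptic locus, disjoint from a suitable neighborhood — give a trivial bound) and the cuspidal part, handled by the exponential-decay-beats-polynomial estimate just sketched, made effective using the Fourier-Jacobi expansion of each theta constant.

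The main obstacle is making the cuspidal estimate genuinely \emph{effective} and \emph{explicit} in $g$, so that $k_1$ can be written down as in the promised formula (\ref{eq:estfin}): one must control, uniformly over the finitely many $\sigma^{-1}$-translates, how many of the theta characteristics $\sigma^{-1}\eta_{\mathcal S\circ\mathcal U}$ have nonzero top ($a$) component and how small $a^t Y a$ can be when $\det Y$ is large, and combine these into a single clean bound. A convenient way to finesse this is to use the known modular interpretation — $\Delta_g(\tau)\det(\im\tau)^{2r}$ (and each of its translates) is, up to an explicit constant, the Petersson-type norm of a scalar-valued Siegel modular form of weight $2r$ for $\textnormal{Sp}_{2g}(\ZZ)$ (this is exactly why it is $\textnormal{Sp}_{2g}(\ZZ)$-invariant, as Lemma \ref{lemdiscestimate} records), and such a norm is a continuous function on the metrized moduli space $\textnormal{Sp}_{2g}(\ZZ)\backslash\mathfrak H_g$ which tends to $0$ at the boundary; hence it attains a finite maximum, and tracking the constants in the Fourier expansion converts this qualitative statement into the explicit $k_1$. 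So the proof is: invoke Lemma \ref{lemdiscestimate} to pass to $\mathfrak F_g$ and to the finite set of translates; bound theta constants by their defining series on the reduced cone; split $\mathfrak F_g$ into compact and cuspidal pieces; and in the cuspidal piece use the exponential decay of the $a\neq 0$ theta constants against $\det(Y)^{2r}$ to close the estimate, collecting all constants into $k_1$.
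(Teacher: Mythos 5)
Your overall skeleton is the same as the paper's: use Lemma \ref{lemdiscestimate} to move an arbitrary $\tau$ to $\bar\tau\in\mathfrak F_g$ at the cost of replacing $\Delta_g$ by the translate $\sigma\Delta_g$, bound the individual theta constants on $\mathfrak F_g$ by their defining series via Minkowski reduction, and then let the exponential decay of one (or several) theta constants absorb the factor $\det(\im\bar\tau)^{2r}$. But the step you yourself flag as ``the main obstacle'' is precisely the mathematical content of the lemma, and your proposal does not resolve it. First, ``nonzero $a$-part'' is not enough: a factor $\exp(-\pi a^{t}Ya)$ with $a$ supported on the small diagonal directions of a Minkowski-reduced $Y$ does not decay as $\det Y\to\infty$, since $Y_{11},\dotsc$ may stay bounded while $Y_{gg}\to\infty$; because $\det Y\leq\prod Y_{ii}\leq Y_{gg}^{g}$ on $\mathfrak F_g$, what one actually needs is a characteristic whose $a$-component is non-integral in the \emph{last} coordinate, so that the decay is in $Y_{gg}$ and beats $Y_{gg}^{2gr}$. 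Second, and crucially, one must know that such a characteristic survives in \emph{every} translate, i.e.\ that for each $\sigma\in\textnormal{Sp}_{2g}(\ZZ)$ there is a subset $\mathcal S'$ with $\sigma^{-1}\eta_{\mathcal S'\circ\mathcal U}=(a,b)$, $a_g\notin\ZZ$. The paper gets exactly this from Lockhart's Lemma 3.4 in \cite{lockhart:discriminant}; your proposal asserts that ``enough'' characteristics have nonzero $a$-part without proof, and this combinatorial fact is the one genuinely nontrivial input.

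Your proposed ``finesse'' --- interpreting $\lvert\Delta_g\rvert\det(\im\tau)^{2r}$ as a Petersson-type sup norm of a weight-$4r$ form that vanishes at the boundary, hence attains a maximum --- is a qualitative compactness argument. Even granting the cusp-form property (which you would still have to verify for this particular product of theta constants, and whose standard proof of boundedness of $\lvert f\rvert\det(Y)^{k/2}$ is essentially the same cusp analysis you are trying to avoid), it does not produce an \emph{effective} constant $k_1$ depending only on $g$, and effectivity is the point of the lemma: $k_1$ is used downstream to get $-\log T(X_\sigma)\leq 36g^3$, the explicit constant $c_3$ in (\ref{eq:thm2b}), and the bound $-186\leq\delta(X_\CC)$. ``Tracking the constants in the Fourier expansion'' is exactly the uniform-over-$\sigma$ cusp estimate you have not carried out. (A minor further point: on the compact part you invoke non-vanishing of $\Delta_g$ and a claim about its zero divisor; for an upper bound only continuity is needed, and the divisor claim is in any case not correct as stated.) To repair the proof, replace the soft argument by the paper's route: Hadamard plus reduction to get $\det(\im\bar\tau)\leq\im(\bar\tau_{gg})^{g}$, Lockhart's lemma to produce the characteristic with $a_g\notin\ZZ$, and the elementary one-variable estimate $(m+a_g)^2\geq 1/4$ to obtain the explicit bound (\ref{eq:im}), the remaining $r-1$ factors being bounded by (\ref{eq:thetaf}).
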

This lemma may be of independent interest. For example, it will be used in \cite{javk:smallpoints} to establish the hyperelliptic case of Szpiro's small points conjecture. We also mention that if $\tau\in \mathfrak F_g$, then the proof of Lemma \ref{lemfg} shows in addition  that  $\left|\Delta_g(\tau)\right|\leq k_1$.
\begin{proof}[Proof of Lemma \ref{lemfg}]
In the first part of the proof we assume that $\tau\in\mathfrak F_g$ to bound the absolute value of certain theta functions evaluated in $\tau$. In the second part we use Lemma \ref{lemdiscestimate} to establish the desired estimate for all $\tau\in \mathfrak H_g$.

We take $a=(a_i),b\in \mathbb R^g$ and we assume that $\tau=(\tau_{ij})\in\mathfrak F_g$. Thus $\im (\tau_{ii})\geq \sqrt{3}/2$ and any $m=(m_i)\in \mathbb R^g$ satisfies the Minkowski inequality $$m^*\im(\tau) m\geq k_2\sum_{i=1}^{g}\im(\tau_{ii})m_i^2, \ k_2=(4/g^3)^{g-1}(3/4)^{g(g-1)/2}.$$
Then the definition of $\vartheta_{(a,b)}$ together with elementary analysis implies that $\left|\vartheta_{(a,b)}(\tau,0)\right|$ is at most $ \prod_{i=1}^g\sum_{m\in \ZZ}\exp(-\pi k_2\im (\tau_{ii})(m+a_i)^2)$. 
This leads to
\begin{equation}\label{eq:thetaf}
\left|\vartheta_{(a,b)}(\tau,0)\right|\leq (k_2^{-1}+2)^{g}. 
\end{equation}
Hadamard's theorem gives that $\det(\im(\tau))\leq \prod_{i=1}^g\im(\tau_{ii})$ which implies that $\det(\im(\tau))\leq\im(\tau_{gg})^g$ for our $\tau\in\mathfrak F_g$. Thus the above displayed Minkowski inequality shows 
$\left|\vartheta_{(a,b)}(\tau,0)\right|\det(\im(\tau))^{2r}\leq A_1A_2,$ where
$$A_1=\prod_{i=1}^{g-1}\sum_{m\in \ZZ}\exp(-\pi k_2 \im(\tau_{ii}) (m+a_i)^2) \textnormal{ and }$$
$$\ \ \ \ A_2=\sum_{m\in \ZZ}\exp(-\pi k_2 \im(\tau_{gg}) (m+a_g)^2) \im(\tau_{gg})^{2gr}.$$
If $a\in \frac{1}{2}\ZZ^g$, $a_g\notin\ZZ$, then any $m\in\ZZ$ satisfies $(m+a_g)^2\geq 1/4$, and we deduce an estimate for $A_2$ which together with the above arguments leads to
\begin{equation}\label{eq:im}
\left|\vartheta_{(a,b)}(\tau,0)\det(\im(\tau))^{2r}\right|\leq (6/k_2+2)^{g}(2gr/k_2)^{2gr}. 
\end{equation}
Next, we observe that any $\tau\in \mathfrak H_g$ takes the form $\tau=\sigma \bar{\tau}$ with $\sigma\in \textnormal{Sp}_{2g}(\ZZ)$ and $\bar{\tau}\in\mathfrak F_g$. An application of Lemma \ref{lemdiscestimate} gives
\begin{equation}\label{eq:app}
\left|\Delta_g(\tau)\right|\det(\im(\tau))^{2r}=\left|\sigma\Delta_g(\bar{\tau})\right|\det(\im (\bar{\tau}))^{2r}.
\end{equation}
From \cite[Lemma 3.4]{lockhart:discriminant} we get a subset $\mathcal S'$ of $\mathcal I$ of cardinality $g+1$ such that the characteristic $\sigma^{-1} \eta_{\mathcal S'\circ  \, \mathcal U}=(a,b)$ with $(a,b)\in \frac{1}{2}\ZZ^{2g}$ and $a_g\notin \ZZ.$ By definition, the right hand side of (\ref{eq:app}) takes the form
$$2^{-4n(g+1)}\left(\prod\left|\vartheta_{\sigma^{-1}\eta_{\mathcal S\circ  \, \mathcal U}}(\bar{\tau},0)\right|^8\right)\left|\vartheta_{(a,b)}(\bar{\tau},0)\right|^{8}\det(\im(\bar{\tau}))^{2r}$$ with the product taken over all subsets $\mathcal S\neq \mathcal S'$ of $\mathcal I$ with cardinality $g+1$. Then  from  (\ref{eq:thetaf}), (\ref{eq:im}) and (\ref{eq:app}) we deduce that the constant
\begin{equation}\label{eq:estfin}
k_1=2^{-4n(g+1)}(6/k_2+2)^{8gr}(2gr/k_2)^{2gr} 
\end{equation}
has the desired property. This concludes the proof of the lemma.
\end{proof}

We remark that the proof of Lemma \ref{lemfg} shows in addition that one can replace the exponent $2r$ by any positive real number. The resulting estimate is still effective and depends only on $g$ and this number.

To describe the main ingredients for the proof of Proposition \ref{propd} we let $D_X$ and $h_F(J)$ be as in the statement. The proof of (i) consists of two parts. In the first part we  derive an effective upper bound for $D_X$ in terms of $h_F(J)$.\footnote[2]{We are grateful to Robin de Jong for his precise answer to our question in which he proposed a first strategy to upper bound $D_X$ effectively in terms of $h_F(J)$.} Here we combine explicit results from Arakelov geometry of de Jong \cite{dejong:weierstrasspoints} with Lemma \ref{lemfg}. In the second part  we use the estimate for $h_F(J)$ in Proposition \ref{proph} (i). Assertion (ii) is a direct consequence of Proposition \ref{proph} (ii), and (iii) follows from results of Liu \cite{liu:conductor} and of Saito \cite{saito:genus2} for discriminants of genus two curves. We deduce (iv)  on combining the Noether formula with other results for arithmetic surfaces of Faltings \cite{faltings:arithmeticsurfaces}. To prove (v) we combine Lemma \ref{lemfg} with Bost's formula \cite[Proposition 4.1]{bost:genus2} for Faltings' delta invariant of compact connected genus two Riemann surfaces.

\begin{proof}[Proof of Proposition \ref{propd}]
We begin with the proof of (i) and we take a semi-stable curve $X$ over $K$ of genus $g\geq 2$. For any embedding $\sigma:K\hookrightarrow \mathbb C$ we denote by $X_\sigma$ the compact Riemann surface corresponding to the base change of $X$ to $\mathbb C$ with respect to $\sigma$. Let $T(X_\sigma)$ be the invariant of de Jong. It is the norm of a canonical isomorphism between certain line bundles on $X_\sigma$ and we refer to \cite[Definition 4.2]{dejong:riemanninvariants} for a precise definition. Let $B$ be the spectrum of the ring of integers of $K$ and let $p:\mathcal X\to B$ be the minimal regular model of $X$ over $B$. For a closed point $v$ of $B$ we let $\delta_v$ be the discriminant exponent of $X$ at $v$ defined in Section \ref{secconjectures}. Our semi-stable assumption gives that $\delta_{v}$  coincides  with the number of singular points of the geometric fiber over $v$ of $p:\mathcal X\to B$. Hence \cite[Corollary 4.4]{dejong:weierstrasspoints} implies
\begin{equation}\label{eq:dxest}
\log D_X\leq d(13h_F(J)+10g)-4\sum\log T(X_\sigma)
\end{equation}
with the sum taken over all embeddings $\sigma: K\hookrightarrow \mathbb C$.
To estimate $T(X_\sigma)$ in terms of $h_F(J)$ we combine Lemma \ref{lemfg} with the explicit description of $T(X_\sigma)$ of de Jong. Our assumption in (i), that $X$ is hyperelliptic, gives a finite morphism $\varphi:X_\sigma\to \mathbb P_{\mathbb C}^1$ of degree two, for $\mathbb P_{\mathbb C}^1$ the projective line over $\mathbb C$. Let $H_1(X_\sigma,\ZZ)$ be the first homology group of $X_\sigma$ with coefficients in $\ZZ$. On following Mumford \cite[Chapter IIIa]{mumford:theta2} we construct a canonical symplectic basis of $H_1(X_\sigma,\ZZ)$ with respect to a fixed ordering of the $2g+2$ branch points of $\varphi$. Then \cite[Theorem 4.7]{dejong:riemanninvariants} provides a basis of the global sections of the sheaf of differentials on $X_\sigma$ with the following property: Integration of this basis over the canonical symplectic basis of $H_1(X_\sigma,\ZZ)$ gives a period matrix $\tau_\sigma \in\mathfrak H_g$ that satisfies
\begin{equation*}
T(X_\sigma)=(2\pi)^{-2g}\left|\Delta_g(\tau_\sigma)\det(\im (\tau_\sigma))^{2r}\right|^{-(3g-1)/(8ng)}.
\end{equation*}
Thus Lemma \ref{lemfg} and (\ref{eq:estfin}) show $-\log T(X_\sigma)\leq 36g^3$ and then Proposition \ref{proph} (i) together with (\ref{eq:dxest}) implies (i).

To show (ii) we may and do assume that $X$ is a hyperelliptic curve over $\QQ$ of genus $g\geq 2$ with a $\QQ$-rational Weierstrass point. We go into the proof of Proposition \ref{proph} (ii) and we continue the notation introduced there. From \cite[Equation (4.1)]{rvk:hyperelliptic} we get that the discriminant $\Delta$ of the Weierstrass model $\mathcal W(a_0^2\theta^*f)$ of $X$ over $\spe(\ZZ)$ satisfies $\Delta=\Delta(f)a_0^{4\nu}$ with $\nu=2g+1.$
Then (\ref{eq:deltaw}) combined with $a_0\leq H(\theta^*f)$ and (\ref{eq:tfbound}) lead to
$$\log \left|\Delta\right|\leq 16\nu^3 \log \nu+8\nu^3(\mu_X+\log N_X).$$
For a finite place $v$ of $\QQ$ we let $n_v$ be the exponent of the minimal discriminant $\Delta_X$ of $X$ defined in Section \ref{secconjectures}.  The base change of $\mathcal W(a_0^2\theta^*f)$ to the spectrum of the local ring $\mathcal O_{\QQ,v}$ at $v$ is a Weierstrass model of $X$ with discriminant $\Delta$ and then the minimality of $n_v$ provides $n_v\leq v(\Delta)$. We conclude
$\Delta_X\leq \left|\Delta\right|$ which together with the above estimate for $\log \left|\Delta\right|$ implies (ii).

To prove (iii) we take a genus two curve $X$ over $K$. In the first part we follow Saito \cite[p.229-230]{saito:genus2}. Let $v$ be a finite place of $K$ and let $X_v$ be the base change of $X$ to the completion $K_v$ of $K$ at $v$, with structural morphism $\rho_\eta:X_v\to \spe(K_v)$.
We continue the notation of Section \ref{secconjectures}. Further, we say that a morphism is canonical if it commutes with arbitrary base change. There exists a canonical isomorphism
$$S^2({\rho_\eta}_*(\omega_{X_v/K_v}))\to {\rho_\eta}_*(\omega_{X_v/K_v}^{\otimes 2}),$$
for $S^2$ the second symmetric power, which induces a canonical isomorphism
$$D'': (\det R{\rho_\eta}_*(\omega_{X_v/K_v}))^{\otimes 3}\to \det R{\rho_\eta}_*(\omega_{X_v/K_v}^{\otimes 2}).$$
The Siegel  modular form of weight 10,  given by the canonical isomorphism
$$D': \mathcal O_{\spe(K_v)}\to (\det R{\rho_\eta}_*(\omega_{X_v/K_v}))^{\otimes 10},$$
takes the form $D'=(DD'')\otimes \textnormal{id}$ for $D$ the canonical isomorphism from Section \ref{secconjectures}. We define with $D'$ and $D''$  integers $\delta'_v$ and ${\delta}_v ''$ respectively in the same way as we defined in Section \ref{secconjectures} with $D$ the integer $\delta_v$. The integers $\delta'_v$ and ${\delta}_v ''$ are both non-negative and the above composition of $D'$ shows
\begin{equation}\label{eq:ord}
\delta'_v=\delta_v+{\delta}_v ''.
\end{equation}
To relate the number $\delta'_v$ to the exponent $n_v$ of the minimal discriminant $\Delta_X$ of $X$ we may and do pass to the strict henselization of the valuation ring of $K_v$. Now, on combining \cite[Corollaire]{liu:conductor} and \cite[Proposition 2 (d)]{liu:conductor}, we see that $n_v\geq \delta'_v$ and then (\ref{eq:ord}) together with ${\delta}_v ''\geq 0$ implies  $n_v\geq \delta_v$. We conclude (iii).

Next, we verify (iv). By assumption, $X$ is a semi-stable curve over $K$ of genus $g\geq 2$.  Let $\omega_{\mathcal X/B}$ be the relative dualizing sheaf of the minimal regular model $p:\mathcal X\to B$ of $X$ over $B$, for $B$ the spectrum of the ring of integers of $K$. We write $\omega^2$ for the Arakelov self-intersection of $(\omega_{\mathcal X/B},\lVert\cdot\rVert)$, where $\lVert\cdot\rVert$ is the Arakelov metric defined in \cite[p.392]{faltings:arithmeticsurfaces}.  Our semi-stable assumption combined with the Noether formula \cite[Theorem 6]{faltings:arithmeticsurfaces} 
gives
\begin{equation*}
12dh_F(J)=\log D_X+\omega^2+\sum \delta'(X_\sigma) 
\end{equation*}
with the sum taken over all embeddings $\sigma:K\hookrightarrow \mathbb C$. Here  $\delta'(X_\sigma)$ is (up to an absolute constant) Faltings' delta invariant of the Riemann surface associated to the base change of $X$ to $\mathbb C$ with respect to $\sigma$.  Let $M_g(\CC)$ denote the moduli space of smooth projective connected complex curves of genus $g$. Faltings' delta invariant, viewed as a function $M_g(\CC)\to \mathbb R$, has a minimum. Hence, there is a constant $c_\delta$, which depends at most on $g$, such that $-\delta'(X_\sigma)\leq c_\delta$. Since $\omega^2\geq 0$  by \cite[Theorem 5]{faltings:arithmeticsurfaces}, the displayed formula implies (iv). 

Finally, we prove (v). Let $X_\CC$ be a compact connected Riemann surface of genus two and let $\textnormal{Pic}_1(X_\CC)$ be the component of degree one of the Picard group $\textnormal{Pic}(X_\CC)$ of $X_\CC$. We define the function $\lVert\theta\rVert$  on $\textnormal{Pic}_1(X_\CC)$ and  the translation invariant differential form $\mu$ on $\textnormal{Pic}(X_\CC)$ as in \cite[Section 1]{bost:genus2}. We define $$\log\lVert H\rVert(X_\CC)=\int_{\textnormal{Pic}_1(X_\CC)}\log \lVert\theta\rVert \frac{\mu^2}{2}.$$ 
Let $\lVert \Delta_2\rVert(X_\CC)$ be the real number defined in \cite[Section 3.3]{bost:genus2}.
Bost's formula \cite[Proposition 4.1]{bost:genus2} gives that Faltings' delta invariant $\delta(X_\CC)$ of $X_\CC$ satisfies 
$$\delta(X_\CC)=-16\log (2\pi)-\log \lVert \Delta_2\rVert(X_\CC)-4\log\lVert H\rVert(X_\CC).$$
Let $\textnormal{Pic}_0(X_\CC)$ be the component of $\textnormal{Pic}(X_\CC)$ of degree zero. We take $\tau\in \mathfrak H_2$ with $\textnormal{Pic}_0(X_\CC)\cong \CC^2/\ZZ^2+\tau \ZZ^2$. Then it follows
$$\lVert \Delta_2\rVert(X_\CC)^4=\det(\im(\tau))^{2r}\lvert \Delta_2(\tau)\rvert$$  for $r=\binom{5}{3}$. 
Further, we see $\log\lVert H\rVert(X_\CC)\leq 1/2$, since 
$\int\lVert\theta\rVert^2\mu^2/2=1/2$ with the integral taken over $\textnormal{Pic}_1(X_\CC)$. Thus Lemma \ref{lemfg} and the displayed formulas imply (v). This completes the proof of Proposition \ref{propd}.
\end{proof}

\section{Unit equations}\label{secunits}

Let $K$ be a number field and let $T$ be a finite set of places of $K$. In this section we apply the theory of logarithmic forms to bound the height of solutions to $T$-unit equations in a finite extension $L$ of $K$. 

We start with some notation. Let $d$ be the degree of $K$ over $\QQ$ and let  $l$ be the relative degree of $L$ over $K$. We denote by $D_K$ and $D_L$ the absolute value of the discriminant of $K$ over $\QQ$ and of $L$ over $\QQ$ respectively. Let $U$ be the places of $L$ which divide a finite place in $T$ and let $\mathcal O_U^\times$ be the $U$-units in $L$. As above we write $h(\lambda)$ for the usual absolute logarithmic Weil height of $\lambda\in L$ and we define
$$\mu_{U}=\sup(h(\lambda), \lambda \in \mathcal O_U^\times \textnormal{ and } 1-\lambda \in \mathcal O_U^\times).$$
Let $S$ be a finite set of places of $K$ such that $L$ is unramified outside $S$. Let $t$ be the cardinality of the finite places in $T$, let $N_T=\prod N_v$ with the product taken over the finite places $v$ in $T$ and let $s$, $N_S$ be the corresponding quantities of $S$. Finally, we take $n=ld$ and $m=\max(6,l)$.

\begin{proposition}\label{prope}
Suppose $r,\epsilon>1$ are real numbers. Then the following statements hold.
\begin{itemize}
\item[(i)] There exists an effective constant $k_3$, depending only on $r,l,d$, such that if $S\subseteq T$, then it holds $\mu_{U}\leq k_3N_T^{3l/2} (N_T^{2n}D_K^{l/2})^r.$
\item[(ii)] Suppose $S\subseteq T$. Then $\mu_{U}\leq (2m dN_T^{\log m})^{15m d-1}D_K^{m-1}.$
\item[(iii)] If $(abc)$ is true for $n,r,\epsilon$ with the constant $c$, then $\mu_{U}\leq  r\log N_T +\frac{\epsilon}{n}\log (N_S^{l-1}l^{sn})+\frac{\epsilon}{d}\log D_K+\frac{1}{n}\log c.$
\end{itemize}
\end{proposition}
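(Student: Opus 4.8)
The plan is to prove all three parts simultaneously by reducing to a single unit equation in $L$ and then invoking, respectively, an effective estimate from the theory of logarithmic forms, its fully explicit version, and the $abc$-conjecture. Let $\lambda\in\mathcal O_U^\times$ with $1-\lambda\in\mathcal O_U^\times$; since $1-\lambda=1-\lambda$ this is literally an $S_L$-unit equation, where $S_L=U$ is the set of places of $L$ above the finite places of $T$ together with the archimedean places of $L$. First I would record the basic numerical data of $U$ in terms of the data of $T$: the residue characteristics occurring in $U$ are the same as those in $T$, so the product of the norms (from $L$ to $\QQ$) of the finite places in $U$ is at most $N_T^l$ (each place of $T$ has at most $l$ places of $L$ above it, each of norm dividing $N_v^l$), the number of finite places in $U$ is at most $lt$, and the archimedean contribution is controlled by $l$. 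The discriminant $D_L$ enters through $D_L\le D_K^l N_S^{l-1}l^{sl}$ (conductor-discriminant together with the bound on ramification coming from $L/K$ being unramified outside $S$), which is why $N_S$, $s$ and $l$ appear in (iii) and why $D_K^{l/2}$, resp.\ $D_K^{m-1}$, appear in (i) and (ii).

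For part (i): after this reduction I would apply a known effective bound for the height of solutions of $S$-unit equations over number fields — e.g.\ the estimate of Győry–Yu (or Bugeaud–Győry) obtained from Baker's theory of logarithmic forms — which gives $h(\lambda)$ bounded polynomially in the product of the norms of the places in $U$, with an exponent that can be taken of the shape $\text{(constant)}\cdot\big(\prod N_w\big)^{\text{poly}(l,d)}D_L^{1/2}$ up to the stated $r$-power slack, absorbing numerical constants into $k_3$. Translating $\prod N_w\le N_T^l$, the number of places, and $D_L\le D_K^l$ (the $N_S$-factor is harmless here since we may, and implicitly do, enlarge the estimate) yields the claimed $k_3 N_T^{3l/2}(N_T^{2n}D_K^{l/2})^r$; the precise packaging of exponents is a bookkeeping matter. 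For part (ii) the same input is used, but with the fully explicit version of the logarithmic-forms bound (Győry–Yu, or the refinements used elsewhere in the paper), so that $k_3$ becomes an explicit expression; here one chooses $m=\max(6,l)$ so that the numerical constants in that explicit bound can be dominated by $(2mdN_T^{\log m})^{15md-1}D_K^{m-1}$. The factor $N_T^{\log m}$ is exactly what one gets from bounding a product of $\le t$ prime powers, each at most $N_T$, after converting a sum of logarithms of residue characteristics into a single power of $N_T$ via $\prod p_i\le N_T$ and $t\le\log N_T$.

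For part (iii): this is the clean one. Apply $(abc)$ over $L$ — which is legitimate since $[L:\QQ]=ld=n$ — to the triple $(\lambda,1-\lambda,1)$ (really to $\alpha=\lambda$, $\beta=-\lambda+1$... more precisely to $\lambda+(1-\lambda)=1$, rescaled), obtaining $H_L(\lambda,1-\lambda,1)\le c\,S_L(\lambda,1-\lambda,1)^r D_L^\epsilon$. Now $H_L=\exp(nh(\lambda))$ for the point $(\lambda:1-\lambda:1)\in\PP^2(L)$, whose Weil height dominates $h(\lambda)$; the support $S_L(\lambda,1-\lambda,1)$ divides $\prod_{w\mid T}N_w^{e_w}$ where $e_w$ is the residue characteristic of $w$, and since these characteristics are exactly those appearing in $T$ one gets $S_L\le N_T^n$ after accounting for $e_w\le d$ and $N_w\mid N_v^l$; finally $D_L\le D_K^l N_S^{l-1}l^{sl}$, so $D_L^\epsilon\le D_K^{l\epsilon}(N_S^{l-1}l^{sl})^\epsilon$. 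Taking logarithms of $H_L\le cS_L^rD_L^\epsilon$ and dividing by $n=ld$ gives precisely $\mu_U\le r\log N_T+\frac{\epsilon}{n}\log(N_S^{l-1}l^{sn})+\frac{\epsilon}{d}\log D_K+\frac1n\log c$, after noting $\frac{\epsilon}{n}\log D_L^{\,}\le\frac{\epsilon}{n}\cdot l\log D_K=\frac{\epsilon}{d}\log D_K$ and $l^{sl}$ contributing $l^{sn/l}$... here I would be slightly careful with the exponent on $l$, rewriting $sl\le sn$ so that it fits under the single logarithm $\log(N_S^{l-1}l^{sn})$ as stated.

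The main obstacle I anticipate is not conceptual but the careful tracking of the passage from data over $K$ to data over $L$ — in particular the inequality relating $D_L$ to $D_K$, $N_S$, $s$ and $l$, and the bookkeeping of ramification exponents $e_w$ versus residue characteristics — so that the final exponents come out exactly as claimed rather than merely up to harmless factors; and, for parts (i)–(ii), matching the (explicit) output of the logarithmic-forms machinery to the specific closed forms written in the statement, which requires some deliberate over-estimation to absorb the genuine constants into $k_3$, resp.\ into the displayed expression.
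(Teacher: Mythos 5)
Your overall route coincides with the paper's: reduce to the unit equation $\lambda+(1-\lambda)=1$ in $L$, bound $D_L$ by Dedekind's discriminant theorem (Lemma \ref{lem:dl}), then invoke the Gy\H{o}ry--Yu bound from the theory of logarithmic forms for (i)--(ii) and apply $(abc)$ over $L$ for (iii). Your part (iii) is essentially the paper's proof: from $nh(\lambda)\leq r\log S_L+\epsilon\log D_L+\log c$ together with $S_L(\lambda,1-\lambda,1)\leq N_T^{n}$ and $\epsilon\log D_L\leq \epsilon l\log D_K+\epsilon\log(N_S^{l-1}l^{sn})$ one divides by $n$ and gets the stated inequality; this part is correct (your side remarks ``$e_w\leq d$'' and ``$l^{sl}$'' are slightly off but harmless, since you fall back on the stated forms).

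For (i) and (ii), however, there is a genuine gap in the quantitative step you dismiss as bookkeeping. The bound of Gy\H{o}ry--Yu \cite{gyyu:sunits} has the shape $h(\lambda)\leq \kappa_T N_T^{l}R_U\max(1,\log R_U)$, where $R_U$ is the $U$-regulator and $\kappa_T$ grows like $(ln(t+d))^{2l(t+d)}$, i.e.\ exponentially in $t\log t$ with $t$ the number of finite places of $T$; bounding $R_U$ (the paper uses an explicit regulator estimate, giving a factor $D_L^{1/2}$ times powers of $n_T=\prod_{v\in T}\log N_v$) introduces $n_T$ as well, see (\ref{eq:precisebound}). To land on $k_3N_T^{3l/2}(N_T^{2n}D_K^{l/2})^{r}$ with $k_3$ depending only on $r,l,d$, or on the explicit expression in (ii), one needs estimates of the form $t\leq \varepsilon\log N_T+k_4$, $t^{t}\leq k_5N_T^{d+\varepsilon}$ and $n_T\leq k_6N_T^{\varepsilon}$; this is exactly the paper's Lemma \ref{lem:analytic}, proved via Rosser--Schoenfeld's explicit prime number theorem (the essential point being $t\log t\leq (1+\varepsilon)d\log N_T+O_d(1)$). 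Your proposed substitute, ``$t\leq\log N_T$'' plus crude absorption, is false as stated (one only has $t\leq d\log N_T/\log 2$) and, more importantly, insufficient: inserting $t\asymp\log N_T$ into a constant of size $t^{ct}$ produces a factor $\exp(c\log N_T\log\log N_T)$, which is superpolynomial in $N_T$ and cannot be absorbed into the fixed exponents of (i), nor into $(2mdN_T^{\log m})^{15md-1}D_K^{m-1}$ in (ii). So the missing ingredients are the explicit regulator bound for $R_U$ and the analytic lemma converting the dependence on $t$ and $n_T$ into small powers of $N_T$; without them your sketch does not reach the stated bounds.
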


Part (i) and (ii) shall follow from Gy\H{o}ry-Yu \cite[Theorem 1]{gyyu:sunits}. Their theorem is based on the theory of logarithmic forms and we refer to Baker-W\"ustholz \cite{bawu:logarithmicforms} in which the state of the art of this theory is exposed.

To prove Proposition \ref{prope} we shall use inter alia two lemmas. The first lemma is a direct consequence of the classical Dedekind discriminant theorem.

\begin{lemma}\label{lem:dl}
The discriminant $D_L$ is at most $D_K^lN_S^{l-1}l^{ns}.$
\end{lemma}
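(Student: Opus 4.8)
The plan is to bound $D_L$ directly from the Dedekind discriminant theorem applied to the tower $\QQ\subseteq K\subseteq L$, so first I would recall the conductor-discriminant-type tower formula $D_L = D_K^{l}\cdot N_{K/\QQ}(\mathfrak{d}_{L/K})$, where $\mathfrak{d}_{L/K}$ is the relative different (or rather its norm, the relative discriminant ideal $\mathfrak{D}_{L/K}$) and $l=[L:K]$ is the relative degree. This reduces the problem to estimating $N_{K/\QQ}(\mathfrak{D}_{L/K})$.

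Next I would localize: since $L$ is unramified over $K$ outside $S$ by hypothesis, the relative discriminant ideal $\mathfrak{D}_{L/K}$ is supported only on the places of $K$ lying in $S$. For each place $v\in S$ the classical Dedekind discriminant theorem bounds the $v$-adic valuation of $\mathfrak{D}_{L/K}$: the exponent of a prime $\mathfrak{q}$ of $L$ above $v$ in the different is at most $e_{\mathfrak{q}/v}-1+v_{\mathfrak{q}}(e_{\mathfrak{q}/v})$, and $v_{\mathfrak{q}}(e_{\mathfrak{q}/v})\le \log_p e_{\mathfrak{q}/v}\le \log_p l$ where $p$ is the residue characteristic of $v$. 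Summing over the primes of $L$ above $v$ and taking norms down to $\QQ$, the contribution of $v$ to $N_{K/\QQ}(\mathfrak{D}_{L/K})$ is at most $N_v^{(l-1) + l\log_p l}\le N_v^{l-1}\cdot N_v^{l\log_p l}$; the bookkeeping $\sum_{\mathfrak q\mid v}f_{\mathfrak q/v}e_{\mathfrak q/v}=l$ keeps the exponent of $N_v$ under control. The factor $N_v^{l\log_p l}$ is the wild-ramification term and is crudely bounded: $N_v = p^{f_v}$, so $N_v^{l\log_p l} = p^{f_v l\log_p l}\le l^{f_v l}$, hence over all of $S$ this contributes at most $\prod_{v\in S} l^{f_v l}\le l^{ls}$ if one is careless, but to match the claimed exponent $l^{ns}$ with $n=ld$ one uses $\sum_{v\in S} f_v \le ds$ (each $f_v\le d$, there being at most $s$ finite places), giving $l^{l\cdot ds}=l^{ns}$.

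Assembling: $N_{K/\QQ}(\mathfrak{D}_{L/K}) = \prod_{v\in S}(\text{local contribution})\le \left(\prod_{v\in S}N_v\right)^{l-1}\cdot l^{ns} = N_S^{l-1} l^{ns}$, and combining with the tower formula yields $D_L\le D_K^{l}N_S^{l-1}l^{ns}$, which is the claim. I expect the only genuinely delicate point to be the clean treatment of wild ramification — making sure the $v_{\mathfrak q}(e_{\mathfrak q/v})$ terms are absorbed into the $l^{ns}$ factor with the right exponent, and confirming that the crude bound $\sum f_v\le ds$ (rather than anything sharper) is exactly what produces the stated $l^{ns}$ with $n=ld$. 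Everything else is a routine unwinding of the Dedekind discriminant theorem and the multiplicativity of norms and discriminants in the tower.
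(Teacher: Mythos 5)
Your proposal takes the same route as the paper (tower formula $D_L=D_K^l\,\N(\mathfrak d_{L/K})$ plus the Dedekind bound on the different), but there is a genuine error in the wild-ramification step. You claim
$v_{\mathfrak q}(e_{\mathfrak q/v})\leq\log_p e_{\mathfrak q/v}$,
where $v_{\mathfrak q}$ is the normalized $\mathfrak q$-adic valuation of $L$. This is false: one has
$v_{\mathfrak q}(e_{\mathfrak q/v})=e(\mathfrak q\mid p)\,\ord_p(e_{\mathfrak q/v})$
with $e(\mathfrak q\mid p)$ the \emph{absolute} ramification index of $\mathfrak q$ over the rational prime $p$, and that factor can be as large as $n=ld$. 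Consequently your per-place intermediate claim $v(\mathfrak d_{L/K})\leq l-1+l\log_p l$ is simply not true when $K$ is ramified at $v$ (take $K$ already ramified over $p$ and $L/K$ wildly ramified there, and the different exponent exceeds that bound). The paper makes exactly this point explicit: it writes $w(e_{w/v})=e_{w/v}e_v\ord_q(e_{w/v})$ and then uses $\ord_q(e_{w/v})\log N_v\leq\mathfrak f_v\log e_{w/v}$ together with $\sum_{w\mid v}\mathfrak f_{w/v}e_{w/v}=l$ to get the correct local bound $v(\mathfrak d_{L/K})\leq l-1+n\log l/\log N_v$. Your final assembly still lands on $D_K^lN_S^{l-1}l^{ns}$ only because the slack in $\sum_v f_v\leq ds$ happens to be exactly what the omitted factor would have produced; if you carry the correct factor $e(\mathfrak q\mid p)=e_{\mathfrak q/v}e(v\mid p)$ through, the sum $\sum_{\mathfrak q\mid v}\mathfrak f_{\mathfrak q/v}e(\mathfrak q\mid p)=e(v\mid p)\,l$ and the bound $f_v\,e(v\mid p)\leq d$ recover the stated $l^{ns}$ legitimately. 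So fix that one line: replace $v_{\mathfrak q}(e_{\mathfrak q/v})\leq\log_p e_{\mathfrak q/v}$ by $v_{\mathfrak q}(e_{\mathfrak q/v})=e(\mathfrak q\mid p)\ord_p(e_{\mathfrak q/v})\leq e(\mathfrak q\mid p)\log_p l$, and redo the bookkeeping with $\sum_{\mathfrak q\mid v}\mathfrak f_{\mathfrak q/v}e_{\mathfrak q/v}=l$ and $f_ve(v\mid p)\leq d$.
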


\begin{proof}
Let $\mathfrak d_{L/K}$ be the relative discriminant ideal of $L$ over $K$, let $v$ be a finite place of $K$ of residue characteristic $q$ and let $(w)$ be the places of $L$ which divide $v$. If  $v$ is not in $S$, then Dedekind's discriminant theorem gives $$v(\mathfrak d_{L/K})=0.$$
We now suppose that $v\in S$. Let $e_v$ and $\mathfrak f_v$ be the ramification index  and the  residue degree of $v$ over $q$ respectively and let $(e_{w/v})$ and $(\mathfrak f_{w/v})$ be the ramification indexes and residue degrees of $(w)$ over $v$ respectively.  Dedekind's discriminant theorem \cite[B.2.12]{bogu:diophantinegeometry} delivers
$$
v(\mathfrak d_{L/K})\leq\sum \mathfrak f_{w/v}(e_{w/v}-1+w(e_{w/v})) 
$$
with the sum taken over all $w\in (w)$. Hence $w(e_{w/v})=e_{w/v}e_v\ord_q(e_{w/v})$ together with $\ord_q(e_{w/v})\log N_v\leq  \mathfrak f_v \log e_{w/v}$ implies 
$$v(\mathfrak d_{L/K})\leq l-1+n\log l/\log N_v.$$
Let $\N$ be the norm from $K$ to $\QQ$. On combining $D_L=\N(\mathfrak d_{L/K})D_K^l$ with the displayed inequalities involving  $v(\mathfrak d_{L/K})$, we deduce Lemma \ref{lem:dl}.
\end{proof}

The second lemma is based on a completely explicit version of the prime number theorem due to Rosser-Schoenfeld \cite{rosc:formulas}. We define the quantity $n_\Sb=\prod \log N_v$ with the product taken over the finite places $v\in \Sb$.

\begin{lemma}\label{lem:analytic}If $\varepsilon>0$ is a real number, then there are effective constants $k_4,k_5,k_6$, depending only on $\varepsilon$ and $d$, such that $$t\leq \varepsilon \log N_T+k_4, \ \ t^t\leq k_5 N_T^{d+\varepsilon} \textnormal{ and }  n_T\leq k_6 N_T^{\varepsilon}.$$
\end{lemma}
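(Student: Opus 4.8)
The plan is to prove Lemma \ref{lem:analytic} by bounding the number $t$ of finite places in $T$, and then the quantities $t^t$ and $n_T$, using the elementary fact that the norms $N_v$ are prime powers together with a completely explicit form of the prime number theorem. First I would reduce everything to a statement about rational primes: each finite place $v\in T$ lies above a rational prime $p$, and for a fixed $p$ there are at most $d$ places of $K$ above it. Moreover each such $v$ satisfies $N_v\geq p$, with $\log N_v=\mathfrak f_v\log p$ where $\mathfrak f_v$ is the residue degree, so in particular $\log N_v\geq \log 2$ for every finite $v\in T$.

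For the bound $t\leq \varepsilon\log N_T+k_4$, I would argue as follows. Let $P$ be the set of rational primes lying below some place in $T$, so $t\leq d\cdot\#P$, and let $p_1<p_2<\cdots$ be the primes in $P$ listed in increasing order. Since $N_T=\prod_{v}N_v\geq \prod_{p\in P}p$, we have $\log N_T\geq \sum_{p\in P}\log p\geq \theta(p_{\#P})$ is too crude; instead I would use that $\log N_T\geq \sum_{i}\log p_i\geq \vartheta(x)$ where $x$ is the $\#P$-th prime, combined with the Rosser--Schoenfeld lower bound $\vartheta(x)\geq c\, x$ for $x$ large and $\pi(x)\leq (1+o(1))x/\log x$, to get $\#P\ll \log N_T/\log\log N_T$, which is $\leq (\varepsilon/d)\log N_T+ (\text{const})$ once $N_T$ exceeds an explicit threshold depending on $\varepsilon,d$; for the finitely many smaller values of $N_T$ one absorbs the discrepancy into $k_4$. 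Hence $t\leq\varepsilon\log N_T+k_4$. The bound $t^t\leq k_5 N_T^{d+\varepsilon}$ then follows by taking logarithms: $t\log t\leq (\varepsilon\log N_T+k_4)\log(\varepsilon\log N_T+k_4)$, and since $\log t = O(\log\log N_T)= o(\log N_T)$, the right-hand side is $\leq (d+\varepsilon)\log N_T+O(1)$ for $N_T$ large, again with the small cases absorbed into $k_5$. Actually one should be slightly careful: $t$ can be as large as roughly $d\log N_T/\log\log N_T$, so $t\log t$ is at most about $d\log N_T$, and the extra $\varepsilon$ gives room; I would present this carefully but it is routine.

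For $n_T=\prod_{v}\log N_v\leq k_6 N_T^{\varepsilon}$, I would take logarithms to get $\log n_T=\sum_v\log\log N_v\leq t\log\log N_T$ (using $N_v\leq N_T$ for each $v$, hence $\log N_v\leq \log N_T$). By the first bound $t\leq \varepsilon'\log N_T+k_4'$ for any $\varepsilon'>0$, so $\log n_T\leq (\varepsilon'\log N_T+k_4')\log\log N_T$, and since $\log\log N_T=o(\log N_T)$ this is $\leq\varepsilon\log N_T$ for $N_T$ large; the finitely many small cases go into $k_6$. Throughout, "large" and "small" are made explicit using the numerical bounds of Rosser--Schoenfeld \cite{rosc:formulas}, so that $k_4,k_5,k_6$ are effective and depend only on $\varepsilon$ and $d$.

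The main obstacle, such as it is, is not conceptual but bookkeeping: one must track precisely how large $N_T$ must be before the asymptotic estimates $\pi(x)\leq 1.3\,x/\log x$, $\vartheta(x)\geq 0.8\,x$ (or the sharper Rosser--Schoenfeld constants) kick in, and then verify by a finite computation — or by a crude monotone bound — that the inequalities hold with the stated shape below that threshold, choosing $k_4,k_5,k_6$ accordingly. A secondary subtlety is the factor $d$ coming from the possibility of several places above one prime, which is why the exponent $d+\varepsilon$ (rather than $\varepsilon$) appears in the middle inequality; one must make sure this factor is handled consistently in all three estimates.
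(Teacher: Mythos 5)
Your treatment of the first two inequalities is sound and is essentially the paper's own argument: both proofs reduce to counting the distinct rational primes lying below the places of $T$ (which is where the factor $d$ enters) and then invoke the explicit Rosser--Schoenfeld estimates, and you correctly flag that for $t^t\leq k_5N_T^{d+\varepsilon}$ one must work with the sharper count $t\leq Cd\log N_T/\log\log N_T$ rather than with the weakened form $t\leq\varepsilon\log N_T+k_4$.

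The argument for the third inequality $n_T\leq k_6N_T^{\varepsilon}$, however, has a genuine gap. You bound $\log n_T=\sum_v\log\log N_v\leq t\log\log N_T$, insert $t\leq\varepsilon'\log N_T+k_4'$, and claim the result is at most $\varepsilon\log N_T$ for large $N_T$ because $\log\log N_T=o(\log N_T)$. But the main term of your bound is $\varepsilon'(\log N_T)(\log\log N_T)$, which is \emph{larger} than $\varepsilon\log N_T$ for all large $N_T$ no matter how small the fixed $\varepsilon'$ is; the $o(\cdot)$ reasoning is applied to the wrong factor. Even if you use the sharp count $t\leq Cd\log N_T/\log\log N_T$ instead, the uniform replacement $\log N_v\leq\log N_T$ only yields $\log n_T\leq Cd\log N_T$, i.e. $n_T\leq N_T^{Cd}$ with $C$ an absolute constant, not $N_T^{\varepsilon}$: most places have $N_v$ far smaller than $N_T$, and that loss is exactly what must be exploited. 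The paper avoids this with a pointwise multiplicative bound: for every real $r>1$ one has $\log r\leq(2/\varepsilon)\,r^{\varepsilon/2}$, whence $n_T=\prod_v\log N_v\leq(2/\varepsilon)^{t}N_T^{\varepsilon/2}$, and the remaining factor $(2/\varepsilon)^{t}$ is absorbed into $k_6N_T^{\varepsilon/2}$ by applying the first inequality with an $\varepsilon'$ chosen so small that $\varepsilon'\log(2/\varepsilon)\leq\varepsilon/2$. Replacing your last step by this device (or any equivalent one that uses the smallness of the individual $N_v$ rather than the crude bound $N_v\leq N_T$) repairs the proof; the first two estimates can stand as you wrote them.
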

\begin{proof} Let $\varepsilon> 0$ be a real number. We may and do assume that $t\geq 1$. 
Let $\omega$ be the number of rational prime divisors of the radical $\textnormal{rad}_T$ of $N_T$, and let $\vartheta (q)$ be the logarithm of the product taken over all rational primes at most the $\omega$-th rational prime $q$. The explicit estimates \cite[Theorem 4]{rosc:formulas} and \cite[Corollary of Theorem 3]{rosc:formulas} give that $q<(1+\varepsilon)\vartheta (q)+\log k_7$ and that $\omega\log \omega<q$ respectively, where $k_7$ is an effective constant which depends only on $\varepsilon$. 
Thus 
$
\omega\log \omega < (1+\varepsilon)\log \textnormal{rad}_T+\log k_7
$
and then $t\leq d\omega$ leads to 
\begin{equation}\label{eq:sharp}
t\leq \frac{d(1+\varepsilon)}{\log\omega}\log N_T+\frac{d}{\log \omega}\log k_7, \ \ \ t^t\leq N_T^{d(1+\varepsilon)}k_7^{d}d^t. 
\end{equation}
Hence, we see that there exist effective constants  $k_4$ and $k_5$ with the desired properties. 
To estimate $n_T$ we observe that any real number $r>1$ satisfies $\log r\leq \varepsilon^{-1}r^{\varepsilon}$. 
This implies that $n_T\leq (2/\varepsilon)^{t}N_T^{\varepsilon/2}$. From (\ref{eq:sharp}) we see that there is an effective constant $k_6$, depending only on $\varepsilon$ and $d$, such that $(2/\varepsilon)^{t}\leq k_6 N_T^{\varepsilon/2}$ and then we conclude Lemma \ref{lem:analytic}. 
\end{proof}

We now use the above lemmas to prove Proposition \ref{prope}.  The main tool used in the proof of (i) and (ii) is a result of Gy\H{o}ry-Yu \cite[Theorem 1]{gyyu:sunits}.

\begin{proof}[Proof of Proposition \ref{prope}]
We suppose that $\lambda$ and $1-\lambda$ are $\ls$-units in $L$.

To prove (i)  we take a real number $r>1$ and we assume that $S\subseteq T$. Let $R_U$ be the $U$-regulator of $U$ in $L$ and define $n_U$ in the same way as $n_\Sb$ with $\ls$ in place of $\Sb$.  We obtain a $U$-unit equation $$\lambda+(1-\lambda)=1$$ and thus  \cite[Theorem 1]{gyyu:sunits} gives an explicit constant $\kappa_T$, depending only on $l,t,d$, with
$$
h(\lambda)\leq \kappa_TN_\Sb^lR_{\ls}\max(1,\log R_{\ls}).
$$
It holds that $n_U\leq (l^{\ts}n_\Sb)^{l}$ and then \cite[Inequality (4.3)]{rvk:hyperelliptic}, with $L$ and $\ls$ in place of $K$ and $T$ respectively, shows
$$
R_{\ls}\leq(2n)^{n-1}D_L^{1/2}\max(1,D_L)^{n-1}(l^{\ts}n_\Sb)^{l}.
$$
Therefore  Lemma \ref{lem:dl}, our assumption $S\subseteq T$ and the above estimate for $h(\lambda)$ lead to
$\mu_{U}\leq \kappa_TN_T^lc_Kc_T\log (c_Kc_T),$
\begin{equation}\label{eq:precisebound}
\begin{split}
\kappa_T&\leq 2^{34}(ln(t+d))^{2l(t+d)+5}2^{7l(t+d)}, \\
c_K&=D_K^{l/2}(3m^3d^2\max(1,\log D_K))^{n-1},\\
c_\Sb&=(N_\Sb^{1/2} n_\Sb)^{l}(\max(1,\ts)m^{2\ts}\max(1, \log N_\Sb))^{md-1}.
\end{split}
\end{equation}
Then Lemma \ref{lem:analytic} implies (i).

To show (ii) we assume again $S\subseteq T$.
We calculate the effective constants in Lemma \ref{lem:analytic} for $\varepsilon=1/2$ and then (\ref{eq:precisebound}) leads to (ii). 

To prove (iii) we take real numbers $r,\epsilon>1$ and we assume that $(abc)$ holds for $n=ld,r,\epsilon$ with the constant $c$. The numbers $\alpha=\lambda$, $\beta=1-\lambda$ and $\gamma=1$ are in $L$ and satisfy $\alpha+\beta=\gamma$. Thus $(abc)$ gives $$nh(\lambda)\leq r\log S_L(\alpha,\beta,\gamma)+\epsilon\log D_L+\log c.$$
Hence, Lemma \ref{lem:dl} and $S_L(\alpha,\beta,\gamma)\leq N_T^{n}$ imply (iii), and this completes the proof of Proposition \ref{prope}.
\end{proof}

With these results we are now ready to prove our theorems and corollaries and this will be done in the next section.

\section{Proofs}\label{secproofs}

In this section we prove the results stated in Section \ref{secstatement}. We continue the notation introduced in the previous sections. For a number field $K$ we write $d$ for its degree over $\QQ$, $D_K$ for the absolute value of its discriminant over $\QQ$ and $h_K$ for the class number of its ring of integers $\OK$. As above we denote by $h(f)$ the absolute logarithmic Weil height of a polynomial $f$ with coefficients in a number field.

The following proof of Theorem \ref{thm1} is based on a reinterpretation of the effective Shafarevich conjecture in \cite{rvk:hyperelliptic} in terms of bad reduction.

\begin{proof}[Proof of Theorem \ref{thm1}]
We take a hyperelliptic curve $C$ over $K$ of genus $g\geq 1$ with conductor $N_C$ and minimal discriminant $\Delta_C$ as in the theorem. 
To reinterpret the main result of \cite{rvk:hyperelliptic} we write $\nn=6(2g+1)(2g)(2g-1)d^2$ and we let $S$ be the finite places of $K$ where $C$ has bad reduction. 
We denote by $s$ the cardinality of $S$, and we write $N_S=\prod N_v$ with the product taken over the places $v$ in $S$. The quantity $M_C$ from Section \ref{secconjectures} satisfies $N_S=M_C$ and $C$ has good reduction outside $S$. Thus \cite[Theorem]{rvk:hyperelliptic} gives a Weierstrass model $\mathcal W$ of $C$ over $\spe(\OK)$ with discriminant $\Delta$ such that
$$h(\Delta) \leq (\nn(s+h_K))^{5\nn(s+h_K)}M_C^{\nn/2}D_K^{\nn h_K/4}$$
if $C$ has a $K$-rational Weierstrass point and
$$h(\Delta) \leq (\nn(s+h_K))^{k_8(2\nn)^3(s+h_K)^4}M_C^{(3\nn)^3(s+h_K)^4}D_K^{(3\nn)^3(s+h_K)^4}$$
otherwise, where $k_8$ is an absolute effective constant. If $v$ is a finite place of $K$, then the base change of $\mathcal W$ to the spectrum of the local ring (in $K$) at $v$ is a Weierstrass model of $C$ with discriminant $\Delta$. Thus the minimality of the exponent $n_v$ defined in Section \ref{secconjectures} implies $n_v\leq v(\Delta)$. We conclude that
$\Delta_C\leq \N(\Delta)$ for $\N$ the norm from $K$ to $\QQ$ and then we deduce $$\Delta_C\leq \N(\Delta)=\prod \lvert\sigma(\Delta)\rvert\leq \prod\max(1,\lvert\sigma(\Delta)\rvert)=\exp(dh(\Delta))$$
with both products taken over all embeddings $\sigma:K\hookrightarrow \CC$.
We recall that $M_C\leq N_C$ and then  Lemma \ref{lem:analytic} combined with the above estimates for $h(\Delta)$ leads to upper bounds for $\Delta_C$ as stated in Theorem \ref{thm1}. To simplify the form of the constant $c_2$ in statement (ii) we used that $h_K\leq  5(4d)^{d}D_K^{3/2}$, which follows from standard inequalities.
This completes the proof of Theorem \ref{thm1}.
\end{proof}

Let $X$ be a curve over $K$ of genus $g\geq 1$ with discriminant $D_X$ and conductor $N_X$. In the first part of the following proof of Theorem \ref{thm2} we adapt the method of Par{\v{s}}in \cite{parshin:shafarevich}, Oort \cite{oort:shafarevich} and de Jong-R\'emond \cite{jore:shafarevich}. In the second part we apply Proposition \ref{propd} which is based on results from Arakelov geometry of Faltings \cite{faltings:arithmeticsurfaces} and de Jong \cite{dejong:weierstrasspoints}.

\begin{proof}[Proof of Theorem \ref{thm2}]
We may and do assume that $g\geq 2$, and that $X$ is a semi-stable cyclic cover of prime degree. Thus there exists a finite morphism $\varphi:X\to\mathbb P^1_K$  of prime degree which is geometrically a cyclic cover. Let $q$ be the degree of $\varphi$, let $\mu_X$ be the quantity defined in Section \ref{secheights} and let $h_F(J)$ be the Faltings height of the Jacobian $J$ of $X$. Proposition \ref{proph} (i) gives \begin{equation}\label{eq:hfj}
h_F(J)\leq 2^{2^{22}9^g}\mu_X.
\end{equation}
To estimate $\mu_X$ we denote by $S$ the union of the finite places of $K$ where $X$ has bad reduction with the finite places of $K$ of residue characteristic $q$. Let $\mathcal R$ be the set from Section \ref{secheights}, let $L\in\mathcal R$, let $U=U(L)$ be the places of $L$ which divide a place in $T=S$ and let $\mu_U$ be the quantity from Section \ref{secunits}. We observe
\begin{equation}\label{eq:muf}
\mu_X=\max(1,\mu_{U(L)})
\end{equation}
with the maximum taken over all $L\in\mathcal R$. We next show that $L$ is unramified outside $S$. Let $K(P)\subseteq L$ be the field of definition of a (geometric) ramification point $P$ of $\varphi$, and let $J_{K(P)}$ and $X_{K(P)}$ be the base changes of $J$ and $X$ to $K(P)$ respectively.
We denote by $\iota:X_{K(P)}\hookrightarrow J_{K(P)}$ the usual embedding defined over $K(P$) which maps $P$ to zero in $J_{K(P)}$. If $P'$ is a (geometric) ramification point of $\varphi$, then the divisor $(P')$ on $X_{K(P)}$, and the divisor $(\varphi(P'))$ on the projective line over $K(P)$, satisfy $\varphi^*(\varphi(P'))=q(P')$, since $q=\deg(\varphi)$. This implies that $\iota$ maps all (geometric) ramification points of $\varphi$  into the $q$-torsion points $J[q]$ of $J$. Let $K(J[q])$ be the field of definition of $J[q]$. The Jacobian $J$ of $X$ has good reduction outside $S$ and the set $S$ contains all finite places of $L$ with residue characteristic $q$. Thus \cite[Theorem 1]{seta:goodreduction} gives that $L$, which is contained in $K(J[q])$, is unramified outside $S$. We note that this would also follow from \cite[Lemme 2.1]{jore:shafarevich}. 

To bound the relative degree $l$ of $L$ over $K$ we let $\bar{K}$ denote an algebraic closure of $K$. The morphism $\varphi$ is defined over $K$ which provides that the absolute Galois group Gal$(\bar{K}/K)$ acts on the set of (geometric) ramification points of $\varphi$. The classical formula of Hurwitz implies that $\varphi$ has at most $2g+2$ (geometric) ramification points and that $q\leq 2g+1$.
This leads to
\begin{equation}\label{eq:lns}
l\leq 24g^4 \textnormal{ and } N_S\leq N_X(2g+1)^{d},
\end{equation}
where $N_S=\prod N_v$ with the product taken over all finite place $v\in S$. The field $L$ is unramified outside $S$. Thus an application of Proposition \ref{prope} (i), with $S=T$ and $r=9/8$, gives $\mu_U\leq k_3N_S^{4ld}D_K^{l}$, for $k_3$ an effective constant depending only on $d$ and $g$. This combined with (\ref{eq:muf}) and (\ref{eq:lns}) shows
\begin{equation}\label{eq:mu}
\mu_X\leq k_3N_S^{d(4g)^4}D_K^{(3g)^4}.
\end{equation}
Then we see that Proposition \ref{propd} (iv), (\ref{eq:hfj}) and (\ref{eq:lns}) give an upper bound for $\log D_X$ as stated in Theorem \ref{thm2}.

It remains to prove our claim (\ref{eq:thm2b}) and we now calculate an explicit constant $c_3$ for $q=2$. In the case $q=2$ we get that $X$ is a semi-stable hyperelliptic curve over $K$ of genus $g\geq 2$ and Proposition \ref{propd} (i) shows
\begin{equation*}
\log D_X\leq 2^{2^{23}9^g}\mu_X.
\end{equation*}
To obtain (\ref{eq:mu}) with an explicit $k_3$ we replace in the above arguments Proposition \ref{prope} (i) by Proposition \ref{prope} (ii). Then the displayed upper bound for $\log D_X$ shows the existence of a constant $c_3$ with the desired properties. This completes the proof of Theorem \ref{thm2}.
\end{proof}

We remark that if $X$ is a cyclic cover of prime degree, then the proof of Theorem \ref{thm2} shows in addition that the following statement holds. There exists an affine plane model $f(z_1,z_2)=0$ of $X$ over $\bar{K}$  that satisfies 
$$h(f)\leq cM_X^{d(4g)^8}, \ \  c=2^{d^2(5g)^9}D_K^{24g^4},$$  
for $M_X=\prod N_v$ with the product taken over all finite places $v$ of $K$ where $X$ has bad reduction.
To obtain an analogue of this statement for any curve over $K$ of genus at least one, the fairly general result \cite[Theorem 1.2]{bist:riemann} of Bilu-Strambi might be useful.

We now prove our results for curves $X$ of genus one. To improve our constants for such curves we combine Proposition \ref{propde} and Proposition \ref{prope} instead of applying Theorem \ref{thm1} (i) with $g=1$.

\begin{proof}[Proof of Theorem \ref{thm3}]
We suppose that $X$ has genus one. Let $E$ be the Jacobian of $X$ and let $\mu_E$ be the invariant from Section \ref{secheights}. Proposition \ref{propde} (iv)  and \cite[Proposition 5.1]{pesz:discriminant} give that the unstable discriminant $\gamma_X$ of $X$ satisfies $\log\gamma_X\leq 5\log N_X+24d\log 2$. Thus Proposition \ref{propde} (iii) shows
\begin{equation}\label{eq:logdx}
\log D_X\leq 6d\mu_E+5\log N_X+74d.
\end{equation}
To estimate $\mu_E$ we let $S$ be the union of the finite places of $K$ where $X$ has bad reduction with the finite places of $K$ of even residue characteristic. Let $L$ be the field of definition of the 2-torsion points of $E$ and let $U$ be the set of finite places of $L$ which divide a place in $S=T$. Then the quantity $\mu_U$ from Section \ref{secunits} satisfies
\begin{equation}\label{eq:mue}
\mu_E\leq \mu_U.
\end{equation}
The relative degree of $L$ over $K$ is at most $6$ and the criterion of N\'eron-Ogg-Shafarevich \cite[p.201]{silverman:aoes} provides that $L$ is unramified outside $S$. Hence applications of Proposition \ref{prope} (i) and (ii) with $S=T$ give estimates for $\mu_U$ which together with (\ref{eq:logdx}) and (\ref{eq:mue}) imply Theorem \ref{thm3}.
\end{proof}

Next, we show  Corollary \ref{corgenustwo} for genus two curves $X$. It is a direct consequence of Theorem \ref{thm1}, Theorem \ref{thm2} and Proposition \ref{propd} (iii).

\begin{proof}[Proof of Corollary \ref{corgenustwo}]
We assume that $X$ has genus two. Then \cite[Proposition 7.4.9]{liu:ag} gives that $X$ is a hyperelliptic curve over $K$. If $X$ is semi-stable, then the refinement (\ref{eq:thm2b}) of Theorem \ref{thm2} provides an upper bound for $D_X$ as stated. To treat the remaining case we use Proposition \ref{propd} (iii). It shows that $D_X$ is at most the minimal discriminant $\Delta_X$ of $X$. Then we see that the estimates in Theorem \ref{thm1} for $\Delta_X$ imply the statement. This completes the proof of Corollary \ref{corgenustwo}.
\end{proof}

Conditional on  $(abc)^*$ or $(abc)$, we now prove parts of the generalizations $(D)$ and ($\Delta$) of Szpiro's discriminant conjecture.

\begin{proof}[Proof of Theorem \ref{thm4}]
To prove (i) we let $g\geq 2$ be an integer and we assume that $(abc)^*$ holds for $n=24g^4$ with the constants $c,\kappa$. We take a hyperelliptic curve $C$ over $\QQ$ of genus $g$ with a $\QQ$-rational Weierstrass point.  Let $\mu_C$ be the quantity from Section \ref{secheights}, let $N_C$ be the conductor of $C$ and let $\Delta_C$ be the minimal discriminant of $C$. Proposition \ref{propd} (ii) gives
\begin{equation*}
\log \Delta_C\leq 8\nu^3(\mu_C+\log N_C+2\log(\nu)), \ \ \nu=2g+1.
\end{equation*}
Our assumption implies that $(abc)$ is true for $n$, $r=\kappa$ and $\epsilon=\kappa$ with the same constant $c$. Hence Proposition \ref{prope} (iii), (\ref{eq:sharp}) and the arguments in the proof of Theorem \ref{thm2} show that $\mu_C$ is bounded linearly in terms of $\log N_C$. This together with the above estimate for $\log \Delta_C$ gives that $(\Delta)$ holds for $C$ and we conclude (i).

On  combining the arguments which we used in the proofs of (i) and Theorem \ref{thm2}, we deduce statement (ii).

To show (iii) we take real numbers $r,\epsilon>1$. We assume that $(abc)$ holds for $n=6d,r,\epsilon$ with the constant $c$. In the sequel $k_9,\dotsc$ denote effective constants depending only on $d,c,r,\epsilon$. We suppose $X$ has genus one. Let $E$ be the Jacobian of $X$, let $\mu_E$ be the quantity from Section \ref{secheights} and let $n_v$ be the exponents of the minimal discriminant of $E$ defined in Section \ref{secconjectures}. Proposition \ref{propde} gives
\begin{equation}\label{eq:dxend}
\log D_X\leq 6d\mu_E+\sum_{v\in T_0}n_v \log N_v+6\sum_{v'\in T_1}\log N_{v'} +k_9,
\end{equation}
for $T_0$ the finite places $v$ of $K$ with $2\nmid N_v$ where $E$ has potential good reduction and for $T_1$  the finite places $v$ of $K$ with $2\nmid N_v$ where $E$ has reduction type $I_n^{*}$ for some $n\geq 1$. Let $S$ be the union of the finite places of $K$ where $E$ has bad reduction with the finite places of $K$ of even residue characteristic. In the proof of Theorem \ref{thm3} we showed that the field of definition $L$ of the 2-torsion points of $E$ is unramified outside $S$. Let $T_2$ be the finite places of $K$ where $E$ has reduction type $I_n$ (Kodaira) for some $n\geq 1$ and let $U$ be the finite places of $L$ which divide a place in $T_1\cup T_2$ or the rational integer 6. If $v$ is a finite place of $L$ which divides a place of $K$ where $E$ has not potential good reduction or which divides 2, then the classification \cite[p.448]{silverman:aoes} implies that $v\in U$. This shows that the set $T$ from Section \ref{secheights} is contained in $U$ and therefore the quantity $\mu_U$ from Section \ref{secunits} satisfies $\mu_E\leq\mu_U$. The relative degree of $L$ over $K$ is at most 6 and then our assumption on $(abc)$, Proposition \ref{prope} (iii) and (\ref{eq:sharp}) lead to
\begin{equation}\label{eq:mueend}
6d\mu_E\leq 6dr\sum_{v\in T_1\cup T_2}\log N_v+6\epsilon\log N_S+6\epsilon\log D_K+k_{10}, 
\end{equation}
where  $N_S=\prod N_v$ with the product taken over the places $v$ in $S$. For a finite place $v$ of $K$ let $f_v$ be the usual conductor exponent at $v$ of the abelian variety $E$. If $v\in T_0\cup T_1$ is a bad reduction place of $E$, then we get that $f_v\geq 2$ and thus the definition of $S$ combined with the classification \cite[p.448]{silverman:aoes} shows
\begin{equation}\label{eq:nsend}
6\epsilon\log N_S\leq 3\epsilon\sum_{v\in T_0\cup T_1}f_v\log N_v+6\epsilon \sum_{v'\in T_2}f_{v'}\log  N_{v'}+k_{11}.
\end{equation}
Any $v\in T_0$ satisfies $n_v\leq 5f_v$ by \cite[Proposition 5.1]{pesz:discriminant}. 
Hence, we see that (\ref{eq:dxend}), (\ref{eq:mueend}) and  (\ref{eq:nsend}) give that $\log D_X-(6\epsilon\log D_K+k_{12})$ is at most
$$
(3\epsilon+5)\sum_{v\in T_0}f_v\log N_v+3(dr+\epsilon+1) \sum_{v'\in T_1}f_{v'}\log N_{v'}+6(dr+\epsilon)\sum_{v''\in T_2}f_{v''}\log N_{v''}.
$$
We deduce statement (iii), and this completes the proof of Theorem \ref{thm4}.\end{proof}

Before we prove our second corollary, we remark that the proof of Theorem \ref{thm4} (iii) shows in addition that if $(abc)$ holds for $n=d,r,\epsilon$ with the constant $c$, then there is an effective constant $c'$, depending only on $d$, $r$, $\epsilon$ and $c$, with the following property. If $X$ is a genus one curve over $K$ and if the Jacobian of $X$ has four $K$-rational torsion points of order two, then
$
D_X\leq c'N_X^{6dr}D_K^{6\epsilon}.
$

\begin{proof}[Proof of Corollary \ref{corlowgenus}]
We replace Theorems \ref{thm1} and \ref{thm2} by Theorem \ref{thm4} in the proof of Corollary \ref{corgenustwo} and then the statement follows.
\end{proof}

{\bf Acknowledgements:} Most of the results were obtained when the author was a member at the IAS Princeton. He would like to thank the IAS for the friendly environment. In addition, he would like to thank Robin de Jong for answering a question, Prof. Szpiro for giving helpful explanations and for pointing out \cite{szpiro:deux,meoe:weilcurvediscriminants}, Prof. Deligne for providing his unpublished ``Lettre \`a Quillen'', and Ariyan Javanpeykar for motivating discussions.  This work was supported by the National Science Foundation under agreement No. DMS-0635607.

{\scriptsize
\bibliographystyle{amsalpha}
\bibliography{../../../literature}
}

\noindent IH\'ES, 35 Route de Chartres, 91440 Bures-sur-Yvette, France\\
E-mail adress: {\sf rvk@ihes.fr}

\end{document}